\newcommand{\RR}{\mathbb{R}}
\newcommand{\HH}{\mathbb{H}}
\newcommand{\PP}{\mathbb{P}}
\newcommand{\ZZp}{\mathbb{Z}_{+}}
\newcommand{\norm}[1]{\ensuremath{\left\| #1\right\|}}
\newcommand{\calK}{\mathcal{K}}
\newcommand{\vect}[1]{\boldsymbol{#1}}
\newcommand{\matr}[1]{\boldsymbol{#1}}
\newcommand{\eqdef}{\stackrel{\textrm{def}}{=}}
\newcommand{\T}{{\sf T}}        
\newcommand{\bal}{\bm{\alpha}}
\newcommand{\bbet}{\bm{\beta}}
\newcommand{\ba}{\vect{a}}
\newcommand{\bu}{\vect{u}}
\newcommand{\bv}{\vect{v}}
\newcommand{\bx}{\vect{x}}
\newcommand{\by}{\vect{y}}
\newcommand{\ep}[1]{\varepsilon^{#1}}
\newcommand{\bKe}{\matr{K}(\varepsilon)}
\newcommand{\bKpe}{\bK'(\varepsilon)}
\newcommand{\bC}{\matr{C}}
\newcommand{\bM}{\matr{M}}
\newcommand{\bA}{\matr{A}}
\newcommand{\bH}{\matr{H}}       
\newcommand{\bHt}{\tilde{\matr{H}}}       
\newcommand{\bHh}{\hat{\matr{H}}}
\newcommand{\bq}{\vect{q}}       
\newcommand{\bQ}{\matr{Q}}
\newcommand{\bB}{\matr{B}}       
\newcommand{\bZ}{\matr{Z}}       
\newcommand{\bU}{\matr{U}}
\newcommand{\bI}{\matr{I}}       
\newcommand{\bV}{\matr{V}}
\newcommand{\bW}{\matr{W}}       
\newcommand{\bK}{\matr{K}}       
\newcommand{\bR}{\matr{R}}       
\newcommand{\bD}{\matr{D}}       
\newcommand{\bDe}{\matr{\Delta}(\varepsilon)}       
\newcommand{\bDet}{\matr{\tilde{\Delta}}(\varepsilon)}       
\newcommand{\beps}{\bm{\varepsilon}}
\newcommand{\bS}{\matr{S}}
\newcommand{\bLa}{\matr{\Lambda}}
\newcommand{\bla}{\vect{\lambda}}
\newcommand{\R}{\mathbb{R}}       
\newcommand{\X}{\mathcal{X}}
\newcommand{\flatlim}{\varepsilon\rightarrow0}
\renewcommand{\O}{\mathcal{O}}       
\DeclareMathOperator{\rank}{rank}
\DeclareMathOperator{\mspan}{span}
\DeclareMathOperator{\argmax}{argmax}
\DeclareMathOperator{\diag}{diag}
\DeclareMathOperator{\leadt}{lt}
\DeclareMathOperator{\leadc}{lc}
\DeclareMathOperator{\val}{val}
\definecolor{darkgreen}{rgb}{0,0.6,0}
\newcommand{\he}[1]{\color{blue} \mathit{#1} \color{black}}
\newcommand{\ones}{\vect{\mathbb{1}}}
\newcommand{\zeroes}{\matr{0}}
\newcommand{\lt}{\tilde{\lambda}}
\newcommand{\Kb}{\,\underline{\!\matr{K}\!}\,}
\newcommand{\Kbe}{\underline{\matr{K}(\epsilon{})}}
\newcommand{\ASE}[1]{\underline{#1}}
\newcommand{\truncASE}[2]{\trunc{#2} \ASE{#1}}
\newcommand{\equivq}{\overset{q}{\sim}}
\newcommand{\equivpar}[1]{\overset{#1}{\sim}}
\newcommand{\equivinf}{\sim}
\newcommand{\trunc}[1]{\mathrm{trunc}_{#1}}
\newcommand{\vM}{\matr{\Omega}}
\newcommand{\vMt}{\tilde{\matr{\Omega}}}
\newcommand{\Ulim}{\tilde{\matr{U}}}
\newcommand{\rInvK}[2]{\bM({#2}\varepsilon^{#1},\bK)}
\newcommand{\TheTitle}{Computing asymptotic eigenvectors and eigenvalues of \\
perturbed symmetric matrices}
\newcommand{\TheShortTitle}{Asymptotic eigenvectors and eigenvalues of perturbed matrices}
\newcommand{\TheAuthors}{K. Usevich and S.Barthelm\'{e}}
\headers{\TheShortTitle}{\TheAuthors}
\title{\TheTitle\thanks{Submitted to the editors DATE.
\funding{This work was supported by the ANR projects MIAI@Grenoble Alpes
  (ANR-19-P3IA-0003) and LeaFleT (ANR-19-CE23-0021-01).}}}
\author{
  Konstantin Usevich\thanks{Universit\'{e} de Lorraine and CNRS, CRAN (Centre de
    Recherche en Automatique en Nancy), UMR 7039, Campus Sciences, BP 70239,
    54506 Vand\oe{}uvre-l\`{e}s-Nancy cedex, France
    (\email{konstantin.usevich@cnrs.fr}).}%
  \and
  Simon Barthelm\'{e}\thanks{CNRS, Univ. Grenoble Alpes,  Grenoble INP, GIPSA-lab, 38000 Grenoble, France (\email{simon.barthelme@gipsa-lab.fr}).}
}
\begin{document}
\maketitle
\begin{abstract}
  Computing the eigenvectors and eigenvalues of a perturbed matrix can be
  remarkably difficult when the unperturbed matrix has repeated eigenvalues. In
  this work we show how the limiting eigenvectors and eigenvalues of a symmetric
  matrix $\bK(\varepsilon)$ as $\flatlim$ can be obtained relatively easily from
  successive Schur complements, provided that the entries scale in different
  orders of $\varepsilon$. If the matrix does not directly exhibit this
  structure, we show that putting the matrix into a ``generalised kernel form''
  can be very informative. The resulting formulas are much simpler than
  classical expressions obtained from complex integrals involving the resolvent.
  
  We apply our results to the problem of computing the eigenvalues and
  eigenvectors of kernel matrices in the ``flat limit'', a problem
  that appears in many applications in statistics and approximation theory. In particular, we prove a
  conjecture from [SIAM J. Matrix Anal. Appl., 2021, 42(1):17--57]  which connects the
  eigenvectors of kernel matrices to multivariate orthogonal polynomials.  
\end{abstract}

\begin{keywords}
matrix perturbations, kernel matrices, eigenvectors, eigenvalues, flat limit, radial basis functions, tropical algebra
\end{keywords}

\begin{MSCcodes}
15A18,15B57,15A80,47A55,47A75,47B34,65F15
\end{MSCcodes}

\section{Introduction}
\label{sec:intro}
In many applications, such as scattered data approximation, machine learning and
statistics, or numerical methods for Partial Differential Equations, matrices
$\bKe$ arise that depend on a scaling parameter $\varepsilon$. Of particular
interest is the behavior of eigenvalues and eigenvectors for small
$\varepsilon$, when the matrices of interest become often very ill-conditioned.
Examples include stiffness matrices in certain PDEs \cite{kannan2014detecting},
transition matrices for nearly-reducible Markov chains \cite{sharpe2021nearly},
and many others.

The motivating example for this paper are kernel matrices, a class of matrices
with applications throughout statistics and Machine Learning
\cite{schaback2006kernel}. A common class of kernel matrices is defined by
radial basis function (RBF) kernels. Given a set of points $\bx_1,\ldots,\bx_n
\in \RR^d$ and an RBF $\psi(\cdot)$, an $n\times n$ matrix is constructed as
follows
\begin{equation}
  \label{eq:kernel-example-rbf}
  \bKe = \left[ \psi\left(\varepsilon\norm{\bx_i-\bx_j}\right) \right]_{i,j=1}^n;
\end{equation}
where  $\varepsilon$ plays the role of an inverse scale parameter.
A simplest example of Gaussian (squared exponential) RBF for $d=1$ is shown in \Cref{fig:flat-limit}.
For  small $\varepsilon$, these matrices become very ill-conditioned, as  $\bK_0= \bK(0) = \ones \ones^\T$ is a rank-one matrix 
and the eigenvalues decay very  quickly as $\varepsilon \to 0$.
Such a limit, called ``flat limit'', has attracted much attention in the approximation theory literature \cite{driscoll2002interpolation},
as small values of $\varepsilon$ are important from a practical standpoint.


In this paper we consider symmetric matrices that depend analytically on $\varepsilon$:
\begin{equation}
  \label{eq:analytic-matrix-pert}
  \bKe = \bK_0+\varepsilon \bK_1+\ep2\bK_2+\ldots, 
\end{equation}
i.e., we are dealing with analytic perturbations of the matrix $\bK_0$.
It is well-known from analytic perturbation theory that for the symmetric case, the eigenvalues and eigenvectors can be chosen to be analytic, in particular, $\lambda_k(\varepsilon) = \varepsilon^{\alpha_k}(\widetilde{\lambda}_k + \O(\varepsilon))$ and $\bu_k(\varepsilon) = \widetilde{\bu}_k + \O(\varepsilon)$.
(See \Cref{fig:flat-limit} for an illustration where $\alpha_k = 2(k-1)$, i.e., the eigenvalues are of order $\O(1), \O(\varepsilon^2), \O(\varepsilon^4),\ldots$)
The focus of this paper is on determining the leading terms of eigenvalues and eigenvectors of matrices of form \eqref{eq:analytic-matrix-pert}.

If $\bK_0$ has full rank, and simple eigenvalues, then the question is easy to
answer using regular perturbation theory \cite{CourantHilbert} (the limiting eigenvectors/eigenvalues are those of $\bK_0$). 
However, if $\bK_0$ is rank-deficient, there is more much more work involved if
we use classical perturbation theory \cite{kato1995perturbation}: we need to  perform so-called reduction of perturbation series with respect to multiple eigenvalue of $\bK_0$ at $0$.
Such an approach  becomes even more impractical if there are many different
orders  $\alpha_1 \le \cdots \le \alpha_n$ of  eigenvalues, as in that case
heavy recursive reduction of perturbation series would be required.

For symmetric positive definite kernel matrices, however, much more is known. In
this case, determinantal identities can be applied to retrieve eigenvalues
(without resorting to perturbation expansions) and the Courant-Fischer principle
can be used in certain cases to find the limiting eigenvectors. This leads to a
complete characterization of limiting eigenvalues and eigenvectors for the case
$d=1$ (points on a real line), see
\cite{schaback2005multivariate,BarthelmeUsevich:KernelsFlatLimit} for most known
kernel matrices. This covers the example in \Cref{fig:flat-limit}, where the
limits and the limiting behaviour can be computed analytically. For $d>1$
(higher-dimensional problems), the situation is more difficult because there are
groups of eigenvalues of different orders in $\varepsilon$ (see the first
example in \cref{sec:numerics}). For such a case, the most recent results, due
to \cite{wathen2015eigenvalues}, and \cite{BarthelmeUsevich:KernelsFlatLimit},
establish limiting eigenvalues and group projectors. However, it is not possible
to obtain the behavior of individual eigenvectors in groups of different orders
using these techniques; in addition, for eigenvalues, the determinantal
identities and proof techniques become much more involved (see
\cite{BarthelmeUsevich:KernelsFlatLimit}).

\begin{figure}
\center \includegraphics[width=14cm]{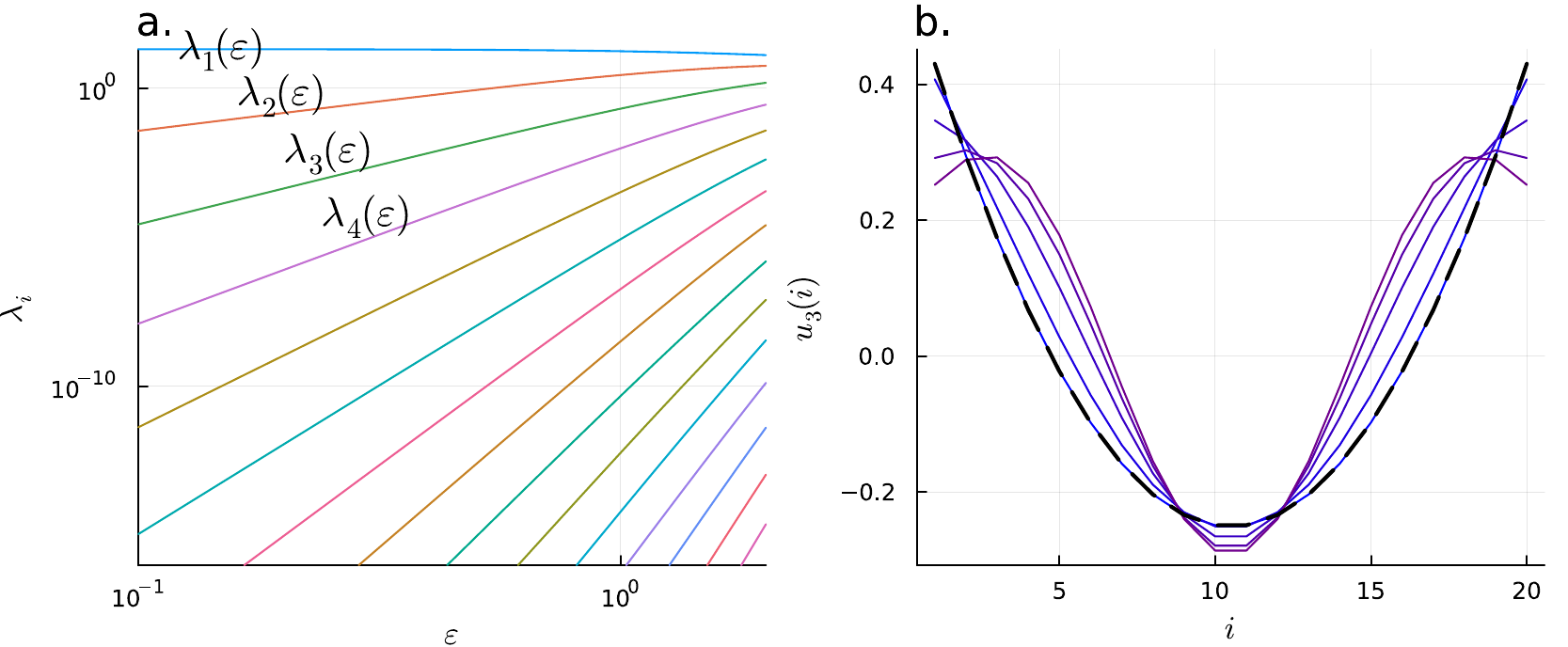}
  \caption{\label{fig:flat-limit} Eigenvalues and eigenvectors of a kernel matrix in small $\varepsilon$. We take 20 equispaced points $x_1, \ldots, x_n$ in $[0,1]$ and form the matrix defined in eq. \ref{eq:kernel-example-rbf}, for $d=1$ and $\psi(t) = \exp(-t^2)$. \textbf{a.}  The eigenvalues of $\bKe$ (computed numerically) as a function of $\varepsilon$, on log-log axes. All eigenvalues except the largest one go to 0 as $\flatlim$, and they do so at increasing rate. \textbf{b.} The eigenvector $u_3(i)$ corresponding to $\lambda_3(\varepsilon)$, plotted as a function of $i$. We show the eigenvector for different values of $\varepsilon$, going from blue to purple as $\flatlim$. We see that the eigenvector converges to a well defined limit (shown as the dotted black line) even though the associated eigenvalue goes to 0. Our goal in this paper is to analyse the asymptotic spectral behaviour of matrices that depend analytically on a parameter $\varepsilon$, and are singular at $\varepsilon=0$. \Cref{thm:ase-smooth} is applicable to the kernel matrix shown here, and provides an expansion for the eigenvalues, as well as an expression for the limiting eigenvectors. }
\end{figure}




In this work we provide a set of tools for determining the limiting eigenvectors
and eigenvalues of singular symmetric (not necessarily positive definite) analytic perturbations
\eqref{eq:analytic-matrix-pert}. We show that:
\begin{enumerate}
\item For a large class of matrices, the information about the limiting
  eigenvalues and eigenvectors can be easily extracted from the Schur
  complements of a certain block matrix. This approach covers matrices in what
  we call the ``generalized kernel form'' (GKF), i.e., matrices of the form
  $\bKe =\bV \bDe (\bH + \O(\varepsilon)) \bDe \bV^{\T}$, where $\bDe$ is a
  diagonal matrix. In this case, the Schur complements of the coefficient matrix
  $\bH$ determine the limiting spectral properties. Our main result relies on
  regularised inverses, inspired by an technique used in
  \cite{barthelme2021determinantal} for the analysis of Determinantal Point Processes.
\item Kernel matrices are a special case of GKF (both for finitely smooth and
  infinitely smooth kernels), and using that we provide as a corollary the
  complete characterization of limiting eigenvectors and eigenvalues. The
  results highlight the relation of eigenvectors to multivariate discrete
  orthogonal polynomials. This settles the conjecture formulated in
  \cite{BarthelmeUsevich:KernelsFlatLimit} for $d>1$ and supersedes the results
  of \cite{wathen2015eigenvalues}, as the result we obtain apply also to
  non-unisolvent sets of points, and are thus applicable to
  approximation and interpolation on curves and surfaces. In addition, all the
  results of \cite{BarthelmeUsevich:KernelsFlatLimit} can be obtained in a much
  more straighforward way, without using determinantal identities or
  the perturbation series of Kato.
\item In some cases, the generalised kernel form may not be sufficient to
  characterise all limiting eigenvectors and eigenvalues. In this case, we
  propose an iterative algorithm, again based on Schur complements, to extract
  the relevant information at increasing orders of $\varepsilon$.
\end{enumerate}

\paragraph{Relation to previous works}
Perturbation theory for matrices goes back to the work of Rayleigh \&
Schrödinger in physics, where the setting is often informal (see
\cite{CourantHilbert}). In the mathematical literature, there are two different
strands, as identified in the recent review by Greenbaum et al.
\cite{greenbaum2020first}. One is concerned with proving perturbation bounds
(see for instance \cite{Bhatia1997,stewart1990matrix}), and the other is
analytic perturbation theory, with the key references being the books by Kato
\cite{kato1995perturbation} and Baumgärtel \cite{baumgaertel1984analytic}. Our
results have a close kinship with the approaches in the tropical algebra
literature \cite{akian2004min,akian2016non,hook2015max} (in particular, the
diagonal scalings we use are strongly related to the so-called ``Hungarian
scalings'' used in tropical algebra \cite{hook2019max}). Compared to classical
approaches in analytic perturbation theory we do not use complex integrals at
all but rely on properties of matrix functions \cite{Higham2008}. Our results
are expressed in terms of Schur complements, which show up in other works as
well (see \cite{carlsson2018perturbation,Carlsson2024}).

The paper is structured as follows. In \cref{sec:gener-matr-pert} we recall the
main properties of analytic matrix perturbations  and set up notation that we
will use in what follows. In particular, we introduce the key notion of
\emph{Asymptotic Spectral Equivalent} (ASE), that compactly encodes the
information on limiting eigenvalues and eigenvectors. We show how the Asymptotic
Spectral Equivalent is linked to regularised inverses, by using standard
properties of matrix functions \cite{Higham2008}. The main results on limiting
eigenvalues and eigenvectors are contained in \cref{sec:diagonal-scalings} for
diagonally-scaled matrices ($\bKe =\bDe (\bH + \O(\varepsilon)) \bDe$) and in
\cref{sec:generalised-kernel-matrices} for matrices in the generalised kernel
form. These results rely upon the results on the ASE and regularised inverses in
\cref{sec:gener-matr-pert}, and do not use the tropical formalism of
\cite{akian2004min}.  \Cref{sec:kernel-matrices} contains application of the
results to the case of kernel matrices and shows how to treat in a unified way
smooth and finitely smooth kernels, in the unisolvent as well as in the
non-unisolvent case (thus proving and generalising results and conjectures from
\cite{barthelme2021determinantal}). Finally, we discuss in
\cref{sec:iterative-alg} what can be done in the case when the generalised
kernel form is not sufficiently informative.

\section{Analytic eigenvalue decompositions and related tools}
\label{sec:gener-matr-pert}
Matrix perturbation theory is an old and large field, and often surprisingly
intricate. Studying general perturbations of general matrices (or worse,
operators) can be very difficult. We focus on \emph{symmetric} matrices, which
are more tractable. We study analytic matrix perturbations, of the form \eqref{eq:analytic-matrix-pert}.

Our goal is to characterise the spectral behaviour of $\bKe$ as $\flatlim$,
i.e., what are the eigenvalues and eigenvectors like for small $\varepsilon$? If
$\bK_0$ has full rank, and simple eigenvalues, then the question is easy to
answer using regular perturbation theory \cite{CourantHilbert}. If $\bK_0$ has
repeated eigenvalues, for instance when it is rank-deficient (as for kernel matrices), a lot more work is involved.
Fortunately, we have the following result, due to Rellich, described in the next subsection.

\subsection{Analytic eigenvalue decomposition of symmetric matrices}
\begin{theorem}
  \label{lem:rellichs-theorem}
  Let $\bK(\varepsilon)$  be as in \eqref{eq:analytic-matrix-pert}, with
  $\bK(\varepsilon)$ symmetric. The eigenvalues $\lambda_1(\varepsilon), \ldots, \lambda_n(\varepsilon)$ and the
  corresponding normalized eigenvectors $\bu_1(\varepsilon), \ldots,
  \bu_n(\varepsilon)$ (i.e., satisfying $\|\bu_{k} (\varepsilon)\|_2 = 1$)  may
  be chosen analytic in a (complex) neighbourhood of $0$.
\end{theorem}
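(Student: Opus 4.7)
The plan is to first establish analyticity of the eigenvalues via the characteristic polynomial, then bootstrap to the eigenvectors using analytic spectral projections. Let $\chi(\lambda,\varepsilon) = \det(\lambda \bI - \bK(\varepsilon))$; this is a monic polynomial of degree $n$ in $\lambda$ whose coefficients are analytic in $\varepsilon$. By the classical Puiseux theorem for algebraic functions of one complex variable, in a punctured neighbourhood of $0$ the $n$ roots of $\chi(\cdot,\varepsilon)$ organise into a finite set of cycles, each of which is described by a convergent Puiseux series $\lambda(\varepsilon) = \sum_{j \geq j_0} c_j \varepsilon^{j/p}$ for some ramification index $p \geq 1$.

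The symmetry hypothesis is used decisively at this point. For real $\varepsilon$ in a real neighbourhood of $0$, $\bK(\varepsilon)$ is real symmetric, hence its $n$ eigenvalues are real. A Puiseux cycle with $p \geq 2$ carrying a genuinely fractional exponent would, upon analytic continuation along a small loop around $\varepsilon = 0$, produce a distinct conjugate branch whose values on real $\varepsilon$ (on one side of $0$) are non-real, contradicting reality of the spectrum. A careful realisation of this argument (this is Rellich's original observation) forces $p = 1$ in every cycle, so each eigenvalue extends to a convergent power series in $\varepsilon$ analytic at $0$, and therefore analytic on a complex disc around $0$.

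Given analytic eigenvalues, analytic eigenvectors are produced by a Riesz projection construction combined with induction on $n$. For each value $\mu$ attained by some $\lambda_k(0)$, take a small closed contour $\Gamma$ in the complex plane enclosing $\mu$ but no other eigenvalue of $\bK(0)$. Then $\bP(\varepsilon) = \frac{1}{2\pi\im}\oint_{\Gamma}(\zeta \bI - \bK(\varepsilon))^{-1}d\zeta$ is analytic in $\varepsilon$ near $0$ and projects onto the direct sum of the eigenspaces whose eigenvalues cluster at $\mu$. Choose an analytic orthonormal basis of $\mathrm{range}(\bP(\varepsilon))$ (for instance by applying Gram--Schmidt to $\bP(\varepsilon)$ applied to an orthonormal basis of $\mathrm{range}(\bP(0))$; analyticity of inverse square roots holds because $\bP(\varepsilon)^{*}\bP(\varepsilon)$ is an analytic family of invertible operators on that range). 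In this basis the restriction of $\bK(\varepsilon)$ is an analytic symmetric pencil of size $m = \mathrm{rank}\,\bP(0)$, to which the procedure is applied recursively, giving a full analytic eigenbasis; normalisation preserves analyticity since $\|\bu_k(\varepsilon)\|_2$ is an analytic positive function near $0$.

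The main technical obstacle is the exclusion of ramified Puiseux branches in the second step: without self-adjointness the conclusion genuinely fails, as already $\lambda = \pm\sqrt{\varepsilon}$ can occur for a non-symmetric analytic pencil, and these are not analytic at $0$. The reality of the spectrum of $\bK(\varepsilon)$ for real $\varepsilon$ has to be used cleanly, and the same self-adjointness of the compressions of $\bK(\varepsilon)$ to the analytic invariant subspaces $\mathrm{range}(\bP(\varepsilon))$ is what keeps the induction step symmetric, so that the argument can be reapplied in each reduced problem.
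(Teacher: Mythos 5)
The paper does not prove this statement at all; it is cited directly from Rellich's book (\cite{rellich1969perturbation}, Th.~I.1.1), so there is no ``paper's own proof'' to compare against --- only the classical argument of Rellich (refined by Kato), which is precisely what you sketch. Your two-step plan is the standard one: Puiseux expansion plus reality of the spectrum to show each eigenvalue has integral ramification (hence is analytic), then a Riesz-projection argument plus an analytic orthonormalisation to get the eigenvectors. The exclusion of fractional exponents is stated correctly in spirit: if a cycle had ramification $p\geq 2$ (with $\gcd$ of the nonzero-exponent denominators genuinely $p$), the $p$ conjugate branches cannot all be real-valued for all real $\varepsilon$ near $0$, contradicting that a real symmetric matrix has real spectrum on both sides of $0$. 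The orthonormalisation step via $\bP(\varepsilon)$ applied to a fixed basis of $\mathrm{range}\,\bP(0)$, with analytic inverse square root, is also the standard device.

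There is one genuine gap in your recursion for the eigenvectors. After compressing $\bK(\varepsilon)$ to $\mathrm{range}\,\bP(\varepsilon)$ you obtain an analytic symmetric $m\times m$ family $\widetilde{\bK}(\varepsilon)$, but by construction $\widetilde{\bK}(0)=\mu\,\bI_m$: the cluster is still completely degenerate at $\varepsilon=0$. Applying ``the same procedure'' directly to $\widetilde{\bK}$ yields $\bP(\varepsilon)=\bI_m$ and no progress --- for instance if $\bK(0)=\mu\bI_n$ your induction never starts. The missing ingredient is Kato's reduction process: replace $\widetilde{\bK}(\varepsilon)$ by the analytic family $\widetilde{\bK}^{(1)}(\varepsilon)=\bigl(\widetilde{\bK}(\varepsilon)-\mu\bI_m\bigr)/\varepsilon$ (well-defined and still symmetric for real $\varepsilon$ because $\widetilde{\bK}(0)=\mu\bI_m$), and only then take the Riesz projection for $\widetilde{\bK}^{(1)}(0)$. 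Iterating this shift-and-rescale terminates because the analytic eigenvalue branches, already known to be analytic, either eventually separate at some finite order of $\varepsilon$ or coincide identically (in which case any orthonormal analytic basis of the corresponding subspace works). With that repair the argument is complete and matches the classical proof Rellich gives.
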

\begin{remark}
In matrix notation, the analytic eigenvalue decomposition can be written as 
\[
\bKe = \bU(\varepsilon) \bLa(\varepsilon) \bU^{\T}(\varepsilon),
\]
where 
\[
\bLa  =  
\begin{pmatrix}\lambda_1(\varepsilon)  &&  \\& \ddots & \\ & & \lambda_n(\varepsilon)\end{pmatrix}, \quad 
\bU(\varepsilon) = \begin{pmatrix}\bu_1(\varepsilon) &  \cdots & \bu_n(\varepsilon)\end{pmatrix}.
\]
The orthogonality and normalization of eigenvectors imply that the eigenvector matrix satisfies the constraints
\begin{equation}\label{eq:normalization_U}
\bU^{\T}(\varepsilon) \bU(\varepsilon) =  \bU(\varepsilon)  \bU^{\T}(\varepsilon) = \bI.
\end{equation}
\end{remark}
\begin{remark}
Note \eqref{eq:normalization_U} requires that the eigenvectors are of norm $1$ for all $\varepsilon$ under consideration.
This constraint can be relaxed to require that   $\bU^{\T}(\varepsilon) \bU(\varepsilon)$ is a diagonal matrix (of the form $\matr{I} + \O(\varepsilon)$), see  \cite{greenbaum2020first} for a related discussion.
\end{remark}

In our work we will be concerned with finding the limiting eigenvalues and eigenvectors, that is finding $\bU(0)$, as well as the leading terms in the expansion of $\lambda(\varepsilon)$ (see \cref{sec:ASE} for a precise definition).
For small matrices (up to $n \le 4$), the eigenvalues can be computed by finding the roots of characteristic polynomial.
In the general case, $n > 4$, 
classical analytic perturbation theory (see e.g. the book by Kato \cite{kato1995perturbation}) provides an exhaustive construction of perturbation series for $\matr{U}(\varepsilon)$ and $\matr{\Lambda}(\varepsilon)$, by using the tools of complex analysis and an expansion of the the \emph{resolvent} $\bR_\varepsilon(z) =
(z \bI-\bKe)^{-1}$. However, the resulting perturbation series are often complicated and difficult to work with.
Moreover,  we are dealing with rank-deficient $\bK(0)$, and all the eigenvalues may have different leading exponents in $\varepsilon$, as shown in the following example.
The approach \cite{kato1995perturbation} is not well adapted to our case, as it typically proceeds by recursion over groups of eigenvalues.

In order to find the leading terms, we follow a different approach (related to \cite{akian2004min,lidskii1966perturbation}), as we need only the leading terms in the expansions.
The approach consists in bringing the matrix into so-called diagonally scaled form, and then uses regularized inverses and Schur complements, and is described in detail in \cref{sec:diagonal-scalings}.
To give a preview of the results, we consider a specific $5
\times 5$ matrix, which serves as a running example for the paper.

\begin{example}
  \label{ex:5x5}
    Consider
  \[
    \bKe =
    \begin{pmatrix}
      1 & \frac{\varepsilon}{2} & \varepsilon^4 & 0 & 0 \\
      \frac{\varepsilon}{2} & \frac{1}{4} \varepsilon^2 & \frac{\varepsilon^2}{2} & 0 & 0\\
      \varepsilon^4 & \frac{\varepsilon^2}{2} & \varepsilon^2 & \frac{\varepsilon^3}{2} & 0 \\
       0  & 0 &  \frac{\varepsilon^3}{2} & \frac{1}{8} \varepsilon^4 & \frac{\varepsilon^4}{2} \\
        0 & 0 & 0 & \frac{\varepsilon^4}{2} & \varepsilon^4 
    \end{pmatrix}.
  \]
  To compute the eigenvalues of such a matrix, the naïve approach which consists
  in finding the roots of the characteristic polynomial does not work,
  since there is no closed-form formula for the roots of a degree 5 polynomial. 
  The tools  described in  \cref{sec:diagonal-scalings} are applicable
  however, and tell us that the eigenvalues have expansion:
  \begin{align*}
    \lambda_1(\varepsilon) &= \lt_1+\O(\varepsilon), \\
    \lambda_2(\varepsilon) &= \varepsilon^2( \lt_2 + \O(\varepsilon) ),\quad \lambda_3(\varepsilon) = \varepsilon^2( \lt_3 + \O(\varepsilon) ), \\
    \lambda_4(\varepsilon) &= \varepsilon^4( \lt_4 + \O(\varepsilon) ), \quad  \lambda_5(\varepsilon) = \varepsilon^4( \lt_5 + \O(\varepsilon) ) 
  \end{align*}
  where
  $\lt_1=1,\lt_2=\frac{1+\sqrt{2}}{2},\lt_3=\frac{1-\sqrt{2}}{2},\lt_4=\frac{9+\sqrt{113}}{16},
  \lt_5 = \frac{9-\sqrt{113}}{16} $. There are thus three groups of eigenvalues: one eigenvalue that
  does not go to $0$ (leading exponent $\varepsilon^0$), $2$ eigenvalues that go to $0$ at rate
  $\varepsilon^2$ (leading exponent $\varepsilon^2$), and
  two other eigenvalues that go to $0$ at rate $\varepsilon^4$ (leading exponent $\varepsilon^4$).
  The matrix $\Ulim$ of asymptotic eigenvectors is given by:
  \[ 
  \bU(\varepsilon) =
    \left( 
    \begin{array}{c|cc|cc}
      1 & 0 & 0 &0 & 0\\
      0&    0.38 & -0.92 &0 &0 \\
      0 & 0.92 & 0.38 & 0&0 \\
      0  & 0& 0 & 0.41 & -0.91\\
      0& 0&0 & -0.91 & 0.41
    \end{array}
  \right) + \O(\varepsilon),
\]
We report numerical values (up to two digits), exact expressions are available
but lengthy. The vertical bars separate the three groups of eigenvectors. The
calculations are explained  later in \cref{sec:diagonal-scalings}. 
\end{example}
\subsection{Notation and assumptions}
\label{sec:notation}

All matrix perturbations $\bKe$ considered here are symmetric, real and
analytic: $\forall \varepsilon \in \R, \bKe \in \R^{n \times n}, \bKe^\T = \bKe$. We do
\emph{not} assume that $\bKe$ is positive definite. Our results can be extended to linear operators in Hilbert spaces by treating $\bKe$ as a 
$\infty \times \infty$ pseudo-matrix, but we take $n$ finite for simplicity. 

We need some notation related to power series.
\begin{definition}
  Let $p(\varepsilon)= \sum_{i=0}^\infty a_i\varepsilon^i $ a power series in
  $\varepsilon$. Then:
  \begin{itemize}
  \item The \emph{leading term}, noted $\leadt(p)$ is the first non-zero term
  \item The \emph{leading coefficient}, noted $\leadc(p)$ is the coefficient of $\leadt(p)$
  \item The \emph{valuation}, noted $\val(p)$ is the degree of $\leadt(p)$
  \item The \emph{leading monomial} is $\varepsilon^{\val(p)}$
  \item The \emph{truncation} of $p$ to degree $k$ is the series
    $\trunc{\varepsilon^k}(p) = \sum_{i=0}^k a_i \varepsilon^i$.
  \end{itemize}
\end{definition}
\begin{example}
  Let $p(\varepsilon) = 2\ep3 + 3\varepsilon^5 + \varepsilon^7$. Then $\leadt(p) =
  2\ep3$, $\leadc(p) = 2$, $\val{p} = 3$, the leading monomial is $\ep3$ and $\trunc{\varepsilon^5}(p)=2\ep3+3\varepsilon^5$.
\end{example}

Some of the facts from \cite{kato1995perturbation} are essential and will let us
set up notation and assumptions. We assume throughout that $\bK(\varepsilon)$ has
dimension $n \times n$ and is symmetric. Its
limit $\bK(0)$ is rank-deficient with rank $c_0 < n$. In such a case results from
\cite{kato1995perturbation} tell us that the eigenvalues of $\bK(\varepsilon)$
have the following behaviour in small $\varepsilon$:
\begin{itemize}
\item $c_0$ eigenvalues have valuation $0$ in $\varepsilon$, i.e. an expansion of
  the form $\lambda(\varepsilon) = \lt + \O(\varepsilon)$, with $\lt\neq0$. These go
  to the non-zero eigenvalues of $\bK(0)$ in the limit (i.e. $\lt$ is a non-zero
  eigenvalue of $\bK_0$) 
\item The other eigenvalues come in groups with increasing valuation; depending
  on the other terms of $\bKe$ as a power series, there may be a group with
  valuation 1, a group with valuation 2, etc. 
\end{itemize}

We group eigenvalues asymptotically by valuation. We note the valuations
$\alpha_0,\alpha_1,\ldots,\alpha_p$, so that there are $p+1$ groups of
eigenvalues (generally, $\alpha_0=0$). The valuations are increasing:
$\alpha_i\leq \alpha_{i+1}$ . The number of eigenvalues in group $i$ is denoted
$c_i$. The eigenvalues in group $i$ ($\lambda_{i,k}(\varepsilon), k = 1,\ldots, c_i$)  have expansion
\begin{equation}\label{eq:eig_expansion_individual}
\lambda_{i,k} = \varepsilon^{\alpha_i}\left(\lt_{i,k}+ O(\varepsilon)\right),
\end{equation}
where  within each group we order eigenvalues in
decreasing $\lt_{i,k}$, so that for small enough $\varepsilon$,
$\lambda_{i,k}(\varepsilon) \geq \lambda_{i,k+1}(\varepsilon)$. Note that some of
these eigenvalues can be negative. 

The eigenvectors expand as $\bU(\varepsilon) = \Ulim + \varepsilon\bU^{(1)} +
\dots$. Here we are only interested in computing $\Ulim$, which we partition
as
\begin{equation}
  \label{eq:U0}
  \Ulim =
  \begin{pmatrix}
    \Ulim_0 & \Ulim_1 & \ldots & \Ulim_p
  \end{pmatrix}
\end{equation}
according to the eigenvalues they are associated with. 
$\Ulim_i \in \R^{n \times
  c_i}$ contains the $c_i$ limiting eigenvectors associated with the $i$-th
group of eigenvalues.
We also use the following compact notation for the expansion of the $i$-th group of eigenvalues 
\eqref{eq:eig_expansion_individual} and their leading terms 
\[ \bla_i(\varepsilon) = \varepsilon^{\alpha_i}\left(\vect{\lt}_i + O(\varepsilon)
  \right).\]

\begin{example}\label{ex:5x5-notation}
In the $5\times5$ matrix of  \Cref{ex:5x5}, we get 
\[
(\alpha_0,c_0) = (0,1), (\alpha_1,c_1)  = (2,2),  (\alpha_2,c_2) = (4,2).
\]
meaning that there is 1 eigenvalue with valuation 0, 2 eigenvalues with valuation 2, and 2 eigenvalues of valuation 4.

The notation for the block of eigenvalues becomes
\[
\vect{\lt}_0 = 1, \vect{\lt}_1 =\begin{pmatrix} 2.08 \\ 0.57 \end{pmatrix} ,
\vect{\lt}_2 =  \begin{pmatrix}2.18\\0.77\end{pmatrix},
\]
(truncated to to $2$ digits of accuracy)
and the eigenvector blocks, respectively
\[
\Ulim_0 =  \begin{pmatrix} 1 \\0\\0\\0\\0\end{pmatrix}, \quad
\Ulim_1 = \begin{pmatrix}0&0\\0.38 & -0.92\\ 0.92 & 0.38\\ 0&0\\0&0\end{pmatrix}, \quad
\Ulim_2 = \begin{pmatrix}0&0\\0&0\\0&0\\0.41 & -0.91\\ -0.91 & 0.41\end{pmatrix}.
\]
\end{example}

\section{The Asymptotic Spectral Equivalent}
\label{sec:ASE}

In the literature on analytic perturbation theory, results are often given in
terms of series expansions for a particular eigenvalue, or groups of
eigenvalues, and the associated eigenvectors or eigenprojectors. In this section
we introduce an operator we call the \emph{Asymptotic Spectral Equivalent}
(ASE), which lets us state our results on limiting spectral behaviour in a
compact and unified way. As we will explain, the ASE of a matrix $\bKe$ is a
\emph{simpler} matrix, whose limiting spectral behaviour is obvious from
inspection, and which matches the limiting spectral behaviour of $\bKe$.

We begin by defining the operator and a few of its properties. We then use it to
define a notion of asymptotic equivalence, which characterises the class of
matrices with equivalent limiting spectral behaviour up to a certain order.
Finally, we prove a characterisation of asymptotic equivalence using limiting
regularised inverses, which is the central tool for the proofs in \cref{sec:reg-inverse}.

\subsection{Definition and basic properties}
\label{sec:ase-def-basic-properties}

Given an analytic matrix perturbation $\bKe$, we can form another matrix, noted
$\Kbe$, which we call the ``Asymptotic Spectral Equivalent''. $\Kbe$ is also a
matrix perturbation, which shares the asymptotic spectral properties of $\bKe$,
but whose particular form makes those properties easy to read out.

\begin{definition}[Asymptotic Spectral Equivalent]
  \label{def:ASE}
  Let $\bKe = \bU(\varepsilon)\bLa(\varepsilon)\bU(\varepsilon)^\T$ a (symmetric)
  analytic matrix perturbation. We define the Asymptotic Spectral Equivalent
  of $\bKe$ as
  \begin{equation}
    \label{eq:ASE}
    \Kbe \eqdef \Ulim \leadt(\bLa) \Ulim = \sum_{i=0}^p \varepsilon^{\alpha_i}\Ulim_i \diag(\vect{\lt}_i)\Ulim_i^\T. 
  \end{equation}
\end{definition}
We often write
\begin{equation}
  \label{eq:ASE-terms}
  \Kbe = \sum_{i=0}^p \varepsilon^{\alpha_i} \Kb_i
\end{equation}
Theorems given below provide ways to identify the terms $\Kb_0, \Kb_1,\ldots$ in \eqref{eq:ASE-terms}.
\begin{remark}[On uniqueness of ASE]
Note that while the analytic eigenvalue decomposition may be nonunique (due to sign ambiguity of eigenvectors, or  due to repeating eigenvalues $\lambda_i(\varepsilon)$), the ASE is defined uniquely.
Let $\bu_{i,k}(\varepsilon) = \widetilde{\bu}_{i,k} +\O(\varepsilon)$. 
Then the sign ambiguity is lifted by the fact that  $\Kbe$ contains the terms $\varepsilon^{\alpha_i}\lt_{i,k} \widetilde{\bu}_{i,k} \widetilde{\bu}_{i,k}^{\T}$, which do not change if we replace $\widetilde{\bu}_{i,k}$ by $-\widetilde{\bu}_{i,k}$.
Second, without loss of generality, let $\lt_{i,1}(\varepsilon) = \lt_{i,\ell}(\varepsilon)$ be a group of equal eigenvalues,
so that the eigenvectors $\widetilde{\bu}_{i,k} (\varepsilon)$ are not defined uniquely for $k=1,\ldots,\ell$.
Then we have that the corresponding term of the ASE
\[
\sum\limits_{k=0}^\ell \varepsilon^{\alpha_i}\lt_{i,k} \widetilde{\bu}_{i,k} \widetilde{u}_{i,k}^{\T} = \varepsilon^{\alpha_i}\lt_{i,0} (\sum\limits_{k=1}^\ell \lt_{i,k} \widetilde{\bu}_{i,k} \widetilde{\bu}_{i,k}^{\T})
\]
does not depend on a particular choice of   $\bu_{i,0} (\varepsilon),\ldots,\bu_{i,\ell} (\varepsilon)$.
\end{remark}
\begin{remark}[Limiting spectral properties from ASE]
From the ASE, it is easy to find to the asymptotic eigenvalues and eigenvectors of $\bKe$.
The orders of the eigenvalues, as well as their leading terms $\vect{\lt}_i$  can be extracted uniquely the ASE, by taking eigenvalues of $\Kb_i$.
There is, however, additional ambiguity which may arise for eigenvectors.
If it happens that the two leading terms in a block coincide (e.g., $\lt_{i,1} = \lt_{i,2}$),  we can retrieve only the corresponding invariant subspace from the eigenvalue decomposition of $\Kb_i$.
To lift such an ambiguity, we may need to continue  perturbation series to higher orders. 
\end{remark}
Let us now list a few properties of the ASE, most of which are very easy to prove. 
\begin{lemma}
  \label{lem:properties-ase}
  The ASE has the following properties.
  \begin{enumerate}
  \item Every term in $\Kbe$ is symmetric, i.e. $\Kb_i=\Kb_i^\T$  for all $i$. 
  \item The terms in $\Kbe$ are orthogonal, $\Kb_i^\T\Kb_j=0$ if $i \neq j$.
  \item If $\bKe$ has full rank for $\varepsilon>0$, then so does $\Kbe$.
  \item Let $\bQ$ an orthogonal matrix ($\bQ^{-1}=\bQ^\T$). Then $\underline{\bQ^\T \bKe
      \bQ} = \bQ^\T \Kbe \bQ$. 
  \end{enumerate}
\end{lemma}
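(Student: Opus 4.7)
The plan is to prove all four items by unpacking the defining formula \eqref{eq:ASE} and exploiting the fact that the analytic eigenvector matrix $\bU(\varepsilon)$ from Rellich's theorem is orthogonal for every $\varepsilon$, so in particular $\bU^{(0)} = \bU(0)$ is an orthogonal matrix and the column blocks $\bU_0,\ldots,\bU_p$ in \eqref{eq:U0} have mutually orthogonal column spans ($\bU_i^\top \bU_j = \vect{0}$ for $i\neq j$ and $\bU_i^\top \bU_i = \bI_{c_i}$). Items 1, 2 and 4 will then be pure algebraic manipulations, whereas item 3 requires a short analytic argument about valuations.

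For item 1, each summand in \eqref{eq:ASE} is of the form $\bU_i \diag(\vect{\lt}_i) \bU_i^\top$, which is manifestly symmetric since $\diag(\vect{\lt}_i)$ is diagonal; thus $\Kb_i = \varepsilon^{\alpha_i}\bU_i \diag(\vect{\lt}_i)\bU_i^\top$ is symmetric (here $\varepsilon$ is a real scalar). For item 2, I would compute $\Kb_i^\top \Kb_j$ directly: expanding the product gives a middle factor $\bU_i^\top \bU_j$, which vanishes by the block orthogonality of $\bU^{(0)}$; symmetry then gives both $\Kb_i\Kb_j = \vect{0}$ and $\Kb_i^\top \Kb_j = \vect{0}$. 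For item 4, if $\bQ$ is constant orthogonal, then $\bQ^\top \bKe \bQ = (\bQ^\top\bU(\varepsilon))\bLa(\varepsilon)(\bQ^\top\bU(\varepsilon))^\top$ is an analytic eigenvalue decomposition of $\bQ^\top \bKe \bQ$ (the eigenvalues, hence the groupings by valuation $\alpha_i$ and the leading coefficients $\vect{\lt}_i$, are unchanged; the analytic eigenvectors are just rotated by $\bQ^\top$). Substituting into \eqref{eq:ASE} and factoring $\bQ^\top$ on the left and $\bQ$ on the right yields $\underline{\bQ^\top\bKe\bQ} = \bQ^\top \Kbe \bQ$.

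The only item requiring a genuine argument is item 3. If $\bKe$ has full rank for all sufficiently small $\varepsilon>0$, then for each $i,k$ the analytic function $\lambda_{i,k}(\varepsilon)$ is not identically zero near $0$, so its power series has a well-defined valuation $\alpha_i$ and a \emph{nonzero} leading coefficient $\lt_{i,k}$ (this is exactly how the grouping in \cref{sec:notation} was defined). Consequently $\diag(\vect{\lt}_i)$ is invertible for every $i$, and I can rewrite
\[
\Kbe = \bU^{(0)} \diag\bigl(\varepsilon^{\alpha_0}\vect{\lt}_0,\,\varepsilon^{\alpha_1}\vect{\lt}_1,\,\ldots,\,\varepsilon^{\alpha_p}\vect{\lt}_p\bigr) (\bU^{(0)})^\top,
\]
which is an eigendecomposition of $\Kbe$ whose eigenvalues $\varepsilon^{\alpha_i}\lt_{i,k}$ are all nonzero for $\varepsilon\neq 0$; hence $\Kbe$ has full rank.

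I expect no serious obstacle; the only subtle point is item 3, where one must be careful to note that ``full rank for $\varepsilon>0$'' refers to $\bKe$ whereas the conclusion is about $\Kbe$, and this transfer goes through the fact that the valuations $\alpha_i$ were defined precisely to make every $\lt_{i,k}$ nonzero. In particular, this avoids any ambiguity that would arise if one allowed $\lt_{i,k}=0$ inside a group.
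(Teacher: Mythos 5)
Your proof is correct and follows essentially the same route as the paper's: items 1, 2, and 4 are immediate algebraic consequences of the definition and the orthogonality of the blocks $\bU_i$, and item 3 hinges on the observation that full rank of $\bKe$ for $\varepsilon>0$ forces every leading coefficient $\lt_{i,k}$ to be nonzero. The only (cosmetic) differences are that you prove item 3 directly by exhibiting an eigendecomposition of $\Kbe$, whereas the paper argues by contradiction, and that you correctly record the transformed eigenvector matrix in item 4 as $\bQ^\top\bU^{(0)}$ (the paper's parenthetical ``changes $\bU^{(0)}$ to $\bQ\bU^{(0)}\bQ^\top$'' appears to be a typo).
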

\begin{proof}
  (1) and (2) follow directly from the definition.

  For (3), if $\Kbe$ is not
  full rank, then there is some $\bx$ such that $\Kbe\bx=0$. $\Ulim$ is full
  rank by construction, so $\Kbe\bx=0$ implies that at least one of the
  eigenvalues is $0$ for all $\varepsilon$, which contradicts the assumption that
  $\bKe$ is invertible for $\varepsilon>0$.

  (4) follows from applying the change of basis to $\bKe$, which leaves the
  eigenvalues intact but changes $\Ulim$ to $\bQ\Ulim\bQ^\T$.
\end{proof}

\begin{example}\label{ex:5x5-ase}
  We return to the matrix treated in \Cref{ex:5x5,ex:5x5-notation}. In this case we had three
  groups of eigenvalues, of order $1,\varepsilon^2,\varepsilon^4$,
  we get:
  \[ \Kbe =  \Kb_0 + \varepsilon^2 \Kb_1 + \varepsilon^4 \Kb_2= \Ulim_0 \Ulim_0^\T + \varepsilon^2 \Ulim_1
    \begin{pmatrix}
      2.08 & 0 \\
      0 & 0.57 
    \end{pmatrix} \Ulim_1^\T
    + \varepsilon^4 \Ulim_2
    \begin{pmatrix}
      2.18 & 0 \\
      0 & 0.77
    \end{pmatrix} \Ulim_2^\T.
  \]
  where all numerical values are truncated to two digits.
  
Then the ASE is equal to \[ \Kbe =
  \begin{pmatrix}
    1 & & & & \\
      & & & & \\
      & & & & \\
      & & & & \\
      & & & & \\
  \end{pmatrix}
  +
  \varepsilon^2
  \begin{pmatrix}
     & & & & \\
      & 0 & \frac{1}{2} & & \\
      & \frac{1}{2}& 1 & & \\
      & & & & \\
      & & & & \\
  \end{pmatrix}
  +
  \varepsilon^4
  \begin{pmatrix}
    & & & & \\
    &  &  & & \\
    & &  & & \\
    & & & \frac{1}{8}& \frac{1}{2} \\
    & & & \frac{1}{2} & 1
  \end{pmatrix}
\]  
  
\end{example}

Our method consists in obtaining formulas for the ASE, from which asymptotic
eigenvalues and eigenvectors can then be read out. 
In short, for the $k$-th block the limiting spectral information can be retrieved from  $\Kb_k$.   The following example explain this process as well as  the possible ambiguities that  arise when there are multiple eigenvalues. 


\subsection{Asymptotic equivalence up to an order}
\label{sec:ASE-equivalence}

The ASE lets us define a natural notion of equivalence between matrix
perturbations, which says that two matrices have identical limiting spectral
behaviour up to a certain order. Let   $\truncASE{\bKe}{q}$ be the truncated ASE, so that:
\begin{equation}
  \label{eq:trunc-ase}
  \truncASE{\bKe}{s}    = \sum_{i: \alpha_i \le s} \varepsilon^{\alpha_i} \Kb_i
= \Ulim \left( \trunc{s} \leadt(\bLa) \right)  \Ulim^\T,
\end{equation}
i.e., all the terms of valuation greater than $q$  are removed from the truncated ASE. 

\begin{definition}[Asymptotic equivalence at order $q$]
  Two matrices $\bKe$ and $\bKpe$ are asymptotically equivalent at order $s \in \ZZp$,
  noted $\bKe \equivq \bKpe$ if and only if
  $\truncASE{\bKe}{q} = \truncASE{\bKpe}{q}$.
  Two matrices are asymptotically equivalent at all orders, noted $\bKe
  \equivinf \bKpe$, if they are equivalent for all $q$, i.e. if
  $\ASE{\bKe} = \ASE{\bKpe}$.
\end{definition}
\begin{example}
  Let $\bA$ a full rank matrix. Then for all $\bB,\bC$, the perturbed matrices $\bKe = \bA +
  \varepsilon\bB$  and $\bKpe = \bA +
  \varepsilon\bC $ are equivalent to all orders, since $\ASE{\bKe} =
  \ASE{\bKpe}= \bA$. 
\end{example}

This lets us define equivalence classes, in the space of analytic perturbations,
of matrices that are ASE-equivalent up to order $q$ (we quotient by $\equivq$).
By definition, in each of these equivalence classes, all matrices have the same
$q$-truncated ASE, and the latter is a distinguished element, since its
$q$-truncated ASE is itself. 
\begin{proposition}
  \label{prop:trunc-pse-is-proj}
  Fix $\bKe$, $q$, and let $\calK_q = \left\{ \bKpe \vert \bKpe \equivq \bKe
  \right\}$. There is a unique matrix in $\bC(\varepsilon) \in \calK_q$ such
  that $\truncASE{\bC(\varepsilon)}{q} = \bC(\varepsilon)$, and this matrix
  equals $\bC(\varepsilon) = \truncASE{\bKpe}{q}$ for any $\bKpe \in \calK_q$.
\end{proposition}
\begin{proof}
  It is easy to verify that  $\truncASE{(\truncASE{\bKpe}{q})}{q} =
  \truncASE{\bKpe}{q} $ from the definition, eq.  \eqref{eq:trunc-ase}: the
  eigenvectors are unchanged and no eigenvalue has valuation $>q$.
  To check that no other matrix in the equivalence class has this property,
  suppose that there exists $\bC \in \calK_q$, such that $\bC \neq
  \truncASE{\bKe}{q}$ but $\truncASE{\bC}{q} = \bC$. Since $\bC \in \calK_q$, then  it must have the same
  limiting eigenvectors and its limiting eigenvalues must have the same leading
  terms. This implies that $\bC$ can only differ from $\truncASE{\bKe}{q}$ in
  some other way, but in this case $\truncASE{\bC}{q} \neq \bC$ and we have a
  contradiction. 
\end{proof}

For what follows, it is helpful to give an equivalent characterisation of these
distinguished elements:
\begin{proposition}
  \label{prop:how-to-recognise-an-ASE}
  A matrix $\bC(\varepsilon)$ is a $p$-truncated ASE if and only if there exists
  an orthonormal $\bQ \in \R^{n \times n}$ such that:
  \[ \bQ^\T \bC(\varepsilon) \bQ= \bD(\varepsilon) \]
  where $\bD$ is a diagonal matrix where each entry is a monomial
  in $\varepsilon$, i.e.  $\leadt D_{i,i}(\varepsilon) = D_{i,i}(\varepsilon)$, 
  and the maximum degree of these polynomials is $p$. 
\end{proposition}
\begin{proof}
  This follows directly from the definition of the truncated ASE, see eq.
  \eqref{eq:trunc-ase}.  
\end{proof}

\subsection{Regularised inverses and their asymptotics}
\label{sec:reg-inverse}

  One of the key tools in our proofs are regularized inverses which serve as a probing device to obtain ASEs.
  The ``regularised inverse''  of a matrix $\bKe$ is the matrix
  \[ 
  \bM(z,\bK) = \bKe(\bKe+z\bI)^{-1} = \bI - (\bKe + z \bI)^{-1},
  \]
  defined for those $z$ such that $\bKe+z\bI$ is invertible. Regularised
  inverses often turn up in the theory of kernel methods in statistics
  \cite{barthelme2021determinantal}, and as the identity above makes clear, are
  strongly related to the resolvent. Unlike traditional approaches in analytic
  perturbation theory, we use regularised inverses in a way that sidesteps the
  need for complex analysis. Our strategy consists in setting $z = \tau
  \varepsilon^s$, and taking the limit $\flatlim$ for various values of $s$. As
  we show in \Cref{lem:RI}, this filters out in $\bM(\tau \varepsilon^s
  ,\bK)$ the parts of the eigenspace with valuation greater than $s$.
  
  What is important for our purposes is that regularised inverses are in fact
  sufficient to characterise the limiting spectral behaviour of $\bKe$. We shall
  use the following definition:
  \begin{definition}[Limit of regularised inverse]
    \label{def:limit-reg-inv}
    For $s \in \ZZp$, we define the following limit:
      \begin{equation}
    \label{eq:lim-reg-inv}
    \bM_{s,\tau}(\bK) = \lim_{\flatlim} \bKe(\bKe + \tau \varepsilon^s \bI)^{-1}
  \end{equation}
  for all $\tau$ such that the limit exists (i.e. all $\tau \notin
  \{-\lt_{ij}\} $).
  \end{definition}
  
  With this definition, we can define a necessary and sufficient condition for
  asymptotic equivalence based on limits of regularised inverses: 
  \begin{proposition}[Regularised inverses and asymptotic equivalence]
  \label{prop:RI-asymp-equiv}
  $\bKe \equivq \bKpe $ if and only if
  \begin{equation}
    \label{eq:lim-reg-inv-equiv}
    \bM_{s,\tau}(\bK) = \bM_{s,\tau}(\bK')
  \end{equation}
  for all $s \leq  q$ and any valid $\tau$. 
\end{proposition}
We defer the proof for now. The way we use the lemma is as follows: for certain
classes of matrices, we can directly compute $\bM_{s,\tau}(\bK)$ for different
values of $s$. Because these limits take simple forms, we can easily exhibit a
matrix $\bC(\varepsilon)$ that has the form of a ASE, and such that
$\bM_{s,\tau}(\bK) = \bM_{s,\tau}(\bC) $, and we conclude from 
\Cref{prop:RI-asymp-equiv} that $\bC$ is in fact the ASE (or the ASE truncated to
a certain order).

\Cref{prop:RI-asymp-equiv} follows from the following lemma, which shows
that $\bM_{s,\tau}(\bK)$ depends only on the ASE of $\bK$. By increasing $s$, we
can increase the span of $\bM_{s,\tau}(\bK)$ to include successive blocks of
limiting eigenvectors. 
\begin{lemma}[Asymptotics of regularised inverses]
  \label{lem:RI}
  Let $\bKe$ a symmetric matrix with asymptotic spectral equivalent $\Kbe = \sum_{i=0}^p
  \varepsilon^{\alpha_i} \Kb_i$. Let $s>0$, $j = \underset{s.t.\ \alpha_i \leq s}{\argmax
    \ i}$.
  
  Then for any $\tau \notin  \{-\lt_{ij}\},s \in \ZZp $,    $\bM_{s,\tau}(\bK)$
  is completely determined by the ASE to order $s$: 
  \begin{equation}
    \label{eq:reginv}
    \bM_{s,\tau}(\bK) = \bM_{s,\tau}(\truncASE{\bK}{s}) =
    \begin{cases}
      \sum_{i=0}^j \Ulim_{i}\Ulim_{i}^\T, & \mathrm{\ if\ } \alpha_j < s, \\
      \sum_{i=0}^{j-1} \Ulim_{i}\Ulim_{i}^\T + \Kb_j(\Kb_j + \tau \bI)^{-1} , & \mathrm{\ otherwise}.
    \end{cases}
  \end{equation}
\end{lemma}

\begin{proof}
  By \Cref{lem:rellichs-theorem}  and from \eqref{eq:normalization_U} we have that
  \begin{align*}
    \label{eq:reginv-kato}
    \bM(\tau \varepsilon^s, \bK) &\eqdef \bKe(\bKe + \tau \varepsilon^s )^{-1} = \bU(\varepsilon)\bLa(\varepsilon)(\bLa(\varepsilon) + \tau \varepsilon^s \bI)^{-1}\bU(\varepsilon)^\T \\
    &= (\Ulim + o(1)) \bLa(\varepsilon)(\bLa(\varepsilon) + \tau \varepsilon^s \bI)^{-1} (\Ulim^\T + o(1)),
  \end{align*}
  so we can focus on the limit of the middle term. The matrix
  $\bLa(\varepsilon)(\bLa(\varepsilon) + \tau \varepsilon^s \bI)^{-1}$ is
  diagonal, and we are going to find the limits of its diagonal elements.
  Consider an eigenvalue $\lambda_{i,k}(\varepsilon)$ in the $i$-th group
  \eqref{eq:eig_expansion_individual} . Then, if $\tau \neq -\lt_{i,k}$, the
  corresponding diagonal element of $\bLa(\varepsilon)(\bLa(\varepsilon) + \tau
  \varepsilon^s \bI)^{-1}$ is equal to
  \begin{equation}
    \label{eq:reginv-ev}
    \frac{\lambda_{i,k}(\varepsilon)}{\lambda_{i,k}(\varepsilon)+\tau\varepsilon^s} = 
    \begin{cases}
      1 + o(1), & \mathrm{\ if\ } s > \alpha, \\
      \frac{\lt_{i,k}}{\tau+\lt_{i,k}}  + o(1), & \mathrm{\ if\ } s = \alpha, \\
      0+ o(1), & \mathrm{\ otherwise},
    \end{cases}
  \end{equation}
  where these three cases follow from the power series  expansion of $\frac{1}{1+x}$ at 0.
  
   Injecting eq. \eqref{eq:reginv-ev} into eq. \eqref{eq:reginv-kato}, we find the following:
   \begin{itemize}
   \item   if $s$ does not match any of the valuations ($\alpha_j \neq s$),
  \[ \bM_{s,\tau} = \underset{\flatlim}{\lim} \sum_{i=0}^j \Ulim_i (\bI_{c_i} +
    \diag(o(1)))\Ulim_i^\T = \sum_{i=0}^j \Ulim_i\Ulim_i^\T , \]
   where $c_i$ is the size of the block of eigenvalues with
  valuation $\alpha_i$; 
  \item if  $\alpha_j=s$, then by denoting 
  \[
  \widehat{\Lambda}_j = \diag\left(\frac{\lt_{j,1}}{\lt_{j,1}+\tau}, \ldots,\frac{\lt_{j,c_j}}{\lt_{j,c_j}+\tau} \right),
  \]
  we have that
 \[
 \bM_{s,\tau} = \sum_{i=0}^{j-1} \Ulim_i   \Ulim_i^\T +
    \Ulim_j(  \widehat{\Lambda}_j)\Ulim_j^\T.
    \]
 Elementary calculations show that $\Ulim_j  \widehat{\Lambda}_j  \Ulim_j^\T =
 \Kb_j(\Kb_j + \tau \bI)^{-1} $.
 Finally, showing that $\bM_{s,\tau}(\bK) = \bM_{s,\tau}(\truncASE{\bK}{s})$
 follows directly from eq. \eqref{eq:reginv-ev} and the definition of the
 truncated ASE (eq. \eqref{eq:trunc-ase}). 
  \end{itemize}
\end{proof}

We can now prove \Cref{prop:RI-asymp-equiv}.
\begin{proof}[Proof of \Cref{prop:RI-asymp-equiv}]
  One direction is straightforward: if $\bKe \equivq \bKpe$, then their
  truncated ASEs are equal up to order $q$, and so $\bM_{s,\tau}(\bK) =
  \bM_{s,\tau}(\truncASE{\bK}{s}) = \bM_{s,\tau}(\truncASE{\bK'}{s})=
  \bM_{s,\tau}(\bK') $ for all $s\leq q$ by \Cref{lem:RI}. In the other
  direction, we have to check that $\bM_{s,\tau}(\bK) =\bM_{s,\tau}(\bK') $ for
  all $s\leq q$ implies that the ASEs of $\bK$ and $\bK'$ are equal up to order
  $q$. This holds because all terms of the ASE up to order $p$ can be
  reconstructed from the sequence $\bM_{s,\tau}(\bK)$ for $s \in \{0, \ldots, q\}$. To
  see why this is the case, consider that for all $s$ such that $s$ matches a valuation
  $\alpha_j$ for the $j$-th group of eigenvalues, eq. \eqref{eq:reginv}, states
  that $\bM_{s,\tau}(\bK)$ is the sum of a projection matrix on all groups of
  eigenvectors of index $< j$, plus a matrix of the form $\Kb_j(\Kb_j + \tau
  \bI)^{-1}$. The latter is in one-to-one correspondance with $\Kb_j $. Thus,
  all terms of order up to $q$ in the ASE can be reconstructed from the sequence
  of regularised inverses.
\end{proof}

We are now ready to present our main results. 

\section{Matrices in diagonal-scaling form}
\label{sec:diagonal-scalings}

The reader may have noticed that in many examples we have given so far the
entries of the matrix are of different orders: some entries of order $1$, some
entries of order $\varepsilon$, some entries of order $\varepsilon^2$, etc. When such
structure exists, it can be exploited to obtain asymptotic expansions more
easily (an idea that appears in the tropical algebra literature, see \cite{akian2004min}).

\begin{example}
    We return to  \Cref{ex:5x5}, which we reproduce here for convenience
  \[
  \bKe =
  \begin{pmatrix}
    1 & \frac{\varepsilon}{2} & \varepsilon^4 & 0 & 0 \\
    \frac{\varepsilon}{2} & \frac{1}{4} \varepsilon^2 & \frac{\varepsilon^2}{2} & 0 & 0\\
    \varepsilon^4 & \varepsilon^2 & \varepsilon^2 & \frac{\varepsilon^3}{2} & 0 \\
    0  & 0 &  \frac{\varepsilon^3}{2} & \frac{1}{8} \varepsilon^4 & \frac{\varepsilon^4}{2} \\
    0 & 0 & 0 & \frac{\varepsilon^4}{2} & \varepsilon^4 
  \end{pmatrix}
\]
Not only are the entries of different magnitudes in $\varepsilon$, but here they
are ordered such that the valuation is non-decreasing across rows and columns
($\val K_{i+c,j} \geq \val K_{i,j}$ for $c\geq 0$).
We can take advantage of this property to re-express $\bKe$
as:
\begin{equation*}
  \bKe = \bDe(\bDe^{-1} \bK \bDe^{-1})\bDe =  \bDe \begin{pmatrix}
    1 & \frac{1}{2} & \varepsilon^3 & 0 & 0 \\
    \frac{1}{2} & \frac{1}{4}  & \frac{1}{2} & 0 & 0\\
    \varepsilon^3 & \frac{1}{2} & 1 & \frac{1}{2} & 0 \\
    0  & 0 &  \frac{1}{2} & \frac{1}{8} & \frac{1}{2} \\
    0 & 0 & 0 & \frac{1}{2} & 1
  \end{pmatrix} \bDe
\end{equation*}
where
$\bDe=\diag(\varepsilon^0,\varepsilon^1,\varepsilon^1,\varepsilon^2,\varepsilon^2)$
is a \emph{diagonal scaling matrix}. In this case, before diagonal scaling, the
matrix $\bKe$ has a single entry of valuation 0. After diagonal scaling, the
matrix $\bDe^{-1} \bK \bDe^{-1}$ can be rewritten as $\bDe^{-1} \bK
\bDe^{-1}=\bH+\O(\varepsilon^3)$, where $\bH$ is a full-rank matrix with entries
of valuation 0. \Cref{thm:ASE-H} below describes the asymptotic spectral
behaviour of $\bKe$ in terms of the scaling matrix $\bDe$ and the matrix $\bH$. 
\end{example}

For matrices which admit a non-trivial diagonal scaling, we can derive precise results on the asymptotic eigenvalues and eigenvectors (\Cref{thm:ASE-H}).
In the next subsection, we set the notation and assumptions and formulate the main result of this section. 
To make the result more user-friendly,  \cref{sec:how-to-use} explains how to find and use diagonal scalings in computations 
and highlight links to tropical algebra.
The proof of \Cref{thm:ASE-H} is presented in \cref{sec:reg-inverse-diag-scaled}, and relies on regularised inverses. 

\subsection{ASE of diagonally-scaled matrices}
\label{sec:notation-scalings}
In this section, we address the following case: 
\begin{equation}
  \label{eq:H-scaling}
  \bKe = \bDe(\bH+o(1))\bDe
\end{equation}
where $\bDe$ is a scaling matrix. Any matrix perturbation can be put into the
form of eq. \eqref{eq:H-scaling}, if only under the trivial scaling
$\bDe = \bI$. In such a case our theorem will have nothing of much interest to
say - it will only describe the eigenvalues and eigenvectors of order
$\varepsilon^0$. Our results begin to be interesting if the scaling matrix is
non-trivial, which means that $\bKe$ needs to have entries with different orders
of magnitude in $\varepsilon$. 

We need to set up some notation to describe the scaling matrix $\bDe$ and the
block structure it induces in $\bH$. We write:
\begin{equation}\label{eq:Delta-scaling} 
\bDe= \matr{\Delta_b} (\varepsilon) = \diag( \underbrace{\varepsilon^{\nu_0}, \dots, \varepsilon^{\nu_0}}_{b_0}, \underbrace{\varepsilon^{\nu_1}, \ldots, \varepsilon^{\nu_1}}_{b_1}, \ldots, \underbrace{\varepsilon^{\nu_p}, \dots, \varepsilon^{\nu_p}}_{b_p})
\end{equation}
where  $\nu_0 < \nu_1 < \cdots < \nu_r$ and  each
valuation $\nu_i$  (not necessarily integer) is repeated $b_i$ times.

We partition $\bH$ according to the valuations in $\bDe$, as
\begin{equation}
\bH =
  \begin{pmatrix}
    \bH_{0,0} & \bH_{0,1} & \dots & \bH_{0,p} \\
    \bH_{1,0} & \bH_{1,1} & \dots & \bH_{1,p}\\
    \vdots & \vdots & \dots & \dots \\
    \bH_{p,0} & \bH_{p,1} & \dots & \bH_{p,p}
  \end{pmatrix},
\end{equation}
with $\bH_{i,j} \in \RR^{b_i \times b_j}$ ($b_i$ are positive integers).
For convenience, we define:
\begin{itemize}
\item  $\bH_{\leq i,\leq j }$ to be the submatrix of $\bH$ with row blocks up to $i$ and column blocks up to $j$;
\item  $\bH_{i,\leq j }$ to be the submatrix of the $i$-th block row 
\[
\bH_{i,\leq j } =   \begin{pmatrix}  \bH_{0,0} & \bH_{0,1} & \dots & \bH_{0,j}\end{pmatrix},
\]
and similar notation  $\bH_{\leq i,j}$ for the submatrix of the $j$-th block column;
\item shortcuts $\bH_{<i,<j} =\bH_{\le i-1,\le j-1}$, $\bH_{i,<j} =\bH_{i,\le j-1}$, $\bH_{<i,j} =\bH_{\le i-1,j}$.
\end{itemize}
Whenever $\bH_{<i,<i}$ is invertible, we define the $i$-th Schur complement as
\begin{equation}\label{eq:ScH}
\bS_i = \bH_{i,i}-\bH_{i,<i} (\bH_{<i,<i})^{-1}\bH_{<i,i},
\end{equation}
and formally define $\bS_0=\bH_{0,0}$. If there exists a $j$ such that $\bS_{j}$ is not invertible, then
the sequence of Schur complements stops at this $\bS_{j}$ (non-invertibility of
$\bS_{j}$ implies non-invertibility of $\bH_{< j+1, < j+1}$).

Armed with the above notation, we can formulate the following theorem.
\begin{theorem}
  \label{thm:ASE-H}
  Let $\bKe$ be as in eq. \eqref{eq:H-scaling}. Assume that the matrix $\bH_{\le
    j, \le j}$, for $j \le p$  is invertible (i.e., all Schur complements up to
  $ \bS_j$ exist). Then:
\begin{enumerate}
\item  the truncated ASE of $\bKe$  has the following block-diagonal form, to
  order $2 \nu_j$
  
\begin{equation}\label{eq:ASE-H}
 \truncASE{\bK}{2\nu_j}(\varepsilon) =
  \begin{pmatrix}
   \varepsilon^{2\nu_0} \bS_0 &  & &  \\
    & \ddots &&\\
     &  & \varepsilon^{2\nu_j} \bS_j &  \\
   & &  & \matr{0}
  \end{pmatrix},
\end{equation}
\item if $\bH$ is invertible (i.e., $j=p$), the ASE is completely determined by
  $\bH$:
\begin{equation}\label{eq:ASE-H-full}
 \Kb(\varepsilon) =
  \begin{pmatrix}
   \varepsilon^{2\nu_0} \bS_0 &  & &  \\
    &   \varepsilon^{2\nu_1} \bS_1 &&\\
     &  & \ddots &  \\
   & &  & \varepsilon^{2\nu_p} \bS_p
  \end{pmatrix},
\end{equation}
\end{enumerate}
\end{theorem}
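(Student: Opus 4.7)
My strategy is to compute the limit of the regularised inverse $\bM_{2\nu_k,\tau}(\varepsilon) = \bKe(\bKe+\tau\varepsilon^{2\nu_k}\bI)^{-1}$ for each $k = 0,1,\ldots,j$ directly from the diagonally scaled form \eqref{eq:H-scaling}, and then to match the result against \Cref{lem:RI} to read off the ASE block $\Kb_k$ by induction on $k$.

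The key observation is that adding $\tau\varepsilon^{2\nu_k}\bI$ to $\bKe$ rescales the matrix: the shift is negligible against the $\varepsilon^{2\nu_i}$-sized blocks for $i<k$, comparable on block $k$, and dominates the $\varepsilon^{2\nu_i}$-sized blocks for $i>k$. Introducing the flattened scaling
\[
\bD'(\varepsilon) = \diag\bigl(\varepsilon^{\nu_0}\bI_{b_0},\ldots,\varepsilon^{\nu_k}\bI_{b_k},\varepsilon^{\nu_k}\bI_{b_{k+1}},\ldots,\varepsilon^{\nu_k}\bI_{b_p}\bigr),
\]
a block-by-block order count would give $\bKe+\tau\varepsilon^{2\nu_k}\bI = \bD'(\bH^{(k,\tau)}+o(1))\bD'$ with the \emph{block diagonal} matrix $\bH^{(k,\tau)}$ equal to $\bH_{\le k,\le k}$ with $\tau\bI_{b_k}$ added to its $(k,k)$ sub-block on the first $b_0+\cdots+b_k$ coordinates, and equal to $\tau\bI$ on the remaining coordinates; the off-diagonal coupling between the two parts decays as $\varepsilon^{\nu_{k'}-\nu_k}$ for $k'>k$.

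Next, invertibility of $\bH_{\le k,\le k}$ combined with $\tau\notin -\mathrm{spec}(\bS_k)$ yields invertibility of $\bH^{(k,\tau)}$: eliminating the first $k-1$ block rows/columns via Schur complements produces exactly $\bS_k+\tau\bI$, and a direct block computation shows that the $(k,k)$ sub-block of $\bigl(\bH^{(k,\tau)}\bigr)^{-1}$ equals $(\bS_k+\tau\bI)^{-1}$. Writing $\bM_{2\nu_k,\tau}(\varepsilon) = \bI - \tau\varepsilon^{2\nu_k}\bD'^{-1}\bigl(\bH^{(k,\tau)}+o(1)\bigr)^{-1}\bD'^{-1}$ and taking block-wise limits, the off-diagonal blocks vanish, the diagonal blocks in positions $i<k$ limit to $\bI$, the $(k,k)$ block limits to $\bI-\tau(\bS_k+\tau\bI)^{-1}=\bS_k(\bS_k+\tau\bI)^{-1}$, and the diagonal blocks in positions $i>k$ vanish.

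Matching this limit, as a function of $\tau$, against the formula in \Cref{lem:RI} and arguing by induction on $k$ forces $\alpha_k=2\nu_k$ with multiplicity exactly $b_k$ for each $k=0,\ldots,j$, identifies $\sum_{i<k}\bU_i\bU_i^\top$ with the orthogonal projector onto the first $b_0+\cdots+b_{k-1}$ coordinates, and pinpoints $\Kb_k$ as the matrix with $\bS_k$ in the $(k,k)$ block and zero elsewhere. This yields part 1, where the trailing $o(\varepsilon^{2\nu_j})$ term accounts for eigenvalue groups with valuation strictly greater than $2\nu_j$ that the argument does not resolve when $j<p$. Part 2 follows by a dimension count: when $\bH$ is invertible, every $\bS_i$ is nonsingular, the total multiplicity $b_0+\cdots+b_p$ equals $n$, and all eigenvalues of $\bKe$ are accounted for, so no residual term survives. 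I expect the most delicate point to be uniform control (in $\tau$, away from $-\mathrm{spec}(\bS_k)$) of the inverse $\bigl(\bH^{(k,\tau)}+o(1)\bigr)^{-1}$ together with the bookkeeping of block-wise order estimates; once these are in place, everything else reduces to straightforward block linear algebra.
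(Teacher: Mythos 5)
Your proof is correct and takes essentially the same route as the paper: regularised inverses together with a ``clipped'' diagonal scaling, then comparison against \Cref{lem:RI}. Your flattened scaling $\bD'$ is exactly the paper's modified scaling $\bDet$ of eq.~\eqref{eq:modified-scaling} specialised to $s=2\nu_k$, and the resulting matrix $\bH^{(k,\tau)}$ and the computed limit of $\bM_{2\nu_k,\tau}$ coincide with the ``matching case'' of the paper's \Cref{lem:H}, eq.~\eqref{eq:reginv-scaling-matching}. The one genuine organisational difference: the paper also computes the limit in the ``non-matching'' case $2\nu_j<s$, and then concludes by matching the rank function $r(s)$ of eq.~\eqref{eq:rank-reginv} for \emph{all} $s\le 2\nu_j$ against the form given by \Cref{lem:RI}. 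You instead evaluate only at $s=2\nu_k$ and argue inductively; this is sound but worth spelling out slightly more than you do. Concretely, to rule out an eigenvalue group with valuation strictly between $2\nu_{k-1}$ and $2\nu_k$, note that taking $\tau\to\infty$ in your computed limit gives the projector onto the first $b_0+\cdots+b_{k-1}$ coordinates, while \Cref{lem:RI} says this $\tau\to\infty$ limit equals $\sum_{i<j'}\bU_i\bU_i^\top$; by the induction hypothesis the latter already contains $\sum_{i<k}\bU_i\bU_i^\top$ as a subsum spanning precisely those $b_0+\cdots+b_{k-1}$ coordinates, which forces $j'=k$ (no intermediate group). Two small details you should tighten: (i) the base case must cover $\nu_0=0$, where $s=0$ is outside the scope of \Cref{lem:RI} (one can take $\tau$ fixed with $s=0$, i.e.\ compute $\lim_{\varepsilon\to 0}\bM(\tau)$ directly); (ii) at $k=j$ the Schur complement $\bS_j$ may be singular, so the identification of $\Kb_j$ must allow its rank to be strictly less than $b_j$, with the missing dimensions absorbed into the $o(\varepsilon^{2\nu_j})$ remainder — as the theorem's Part 1 anticipates. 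Finally, the ``uniform in $\tau$'' control you flag is unnecessary: you fix $\tau$ (outside $-\mathrm{spec}(\bS_k)\cup\{0\}$), take $\varepsilon\to 0$, and only afterward vary $\tau$.
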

\begin{proof}
  The proof of  \Cref{thm:ASE-H} is deferred to \cref{sec:reg-inverse-diag-scaled}, and uses regularised inverses.
\end{proof}

\begin{remark}[Related results]
  We know of related, but not equivalent, results in the literature. 
  Tropicalisation of the characteristic polynomial of $\bKe$ can be used to
  lower-bound the valuation of the eigenvalues \cite{akian2014tropical,
    akian2004min, akian2016non}, and obtain the leading coefficients of
  eigenvalues in certain cases.  Schur complements appear in the Lidskii--Vishik--Lyusternik
  approach to perturbation theory of non-symmetric matrices
  \cite{lidskii1966perturbation,moro1997lidskii}, and in \cite{carlsson2018perturbation}.
\end{remark}
\begin{remark}[Localization]
  \Cref{thm:ASE-H} implies that the eigenvectors of diagonally-scaled
  matrices are asymptotically \emph{localised}, meaning that they are non-zero
  over only part of the domain. Determining classes of matrices in which
  eigenvectors are localised is an important topic in applications
  \cite{benzi2016localization}. 
\end{remark}
How to work with and interpret the results of  \Cref{thm:ASE-H} will
hopefully become clearer with the tools we introduce in \cref{sec:how-to-use}. First, let us explain how the asymptotic eigenvectors and
eigenvalues can be recovered from the ASE, as given in eq. \eqref{eq:ASE-H} in
the general case.
  
  Take $j$ the maximum index such that $\bS_j$ exists, and denote for each $i \le j$ the block matrix 
 \[
\bZ_i =\begin{pmatrix}\zeroes & \cdots & \bI_{b_i} & \cdots & \zeroes \end{pmatrix}^{\T}.
\]
Note that in this case eqs.  \eqref{eq:ASE-H} and \eqref{eq:ASE-H-full}, can be written as 
\begin{align}
 \Kb(\varepsilon)& = \sum_{i=0}^j \varepsilon^{2\nu_i} \bZ_i \bS_i \bZ_i^\T  + o(\varepsilon^{2\nu_j }), \quad  \text{and }\label{eq:ASE-H-Z} \\
 \Kb(\varepsilon) &= \sum_{i=0}^p \varepsilon^{2\nu_i} \bZ_i \bS_i \bZ_i^\T, \label{eq:ASE-H-full-Z}
 \end{align}
respectively.
 
For $i < j$,  \Cref{thm:ASE-H} states that there exists a block of eigenvalues with valuation $2 \nu_i$. Since $\bZ_i$ is orthonormal, the eigenvalues and eigenvectors of the term $\bZ_i
  \bS_i \bZ_i^\T$ in the ASE can be obtained from the eigenvalues and
  eigenvectors of $\bS_i$. By assumption, $i < j$ so that $\bS_i$ is invertible, and has
  $b_i$ non-zero eigenvalues $\eta_{i,1}\dots \eta_{i,b_i}$. Then $\bKe$ has a
  block of asymptotic eigenvalues of the form $\lambda(\varepsilon) =
  \varepsilon^{2\nu_i}(\eta_{i,k} + \O(\varepsilon))$ for $k \in \{1, \ldots, b_i\}$.
  The corresponding eigenvectors can be obtained from the eigenvectors of
  $\bS_i$, if all eigenvalues of $\bS_i$ are simple. If there are repeated
  eigenvalues in $\bS_i$, then the asymptotic eigenvectors cannot be identified
  (a further expansion is needed to make the eigenvalues distinct).

  In interpreting the last block $j$ in the expansion of the ASE, we need to be
  careful. If $j < p$ then the expansion is truncated early, we are in case 1 of
  the theorem, and the ASE is only identified up to valuation $2\nu_j$. The
  Schur complement $\bS_j$ is non-invertible. Its non-zero eigenvalues (and
  eigenvectors) give the leading coefficients of asymptotic eigenvalues of
  $\bKe$ with valuation $2\nu_j$, and its zero eigenvalues (and the associated
  null space) correspond to eigenvalues of $\bKe$ with valuation strictly higher
  than $2\nu_j$.
  If, on the other hand, $\bH$ is invertible, then the last Schur
  complement $\bS_j$ has full rank, and all asymptotic eigenvalues of $\bKe$ can
  be identified using the theorem.

\subsection{\Cref{thm:ASE-H}: a user's guide}
\label{sec:how-to-use}

To make \Cref{thm:ASE-H} more useful in calculations (either by hand or
on a computer), let us explain how to easily compute $\bH$ from $\bKe$ given a
candidate scaling. The right tools to use come from tropical algebra (see e.g.
\cite{joswig2021essentials} for an introduction). Fortunately, they are 
easy to understand and can be described with minimal background.

In computations by hand, it is useful to write down a \emph{valuation matrix}
for the entries of $\bKe$:
\begin{definition}[Valuation matrix]
  \label{def:valuation-matrix}
  The valuation matrix $\vM$ of $\bKe$ is a matrix with entries in $\ZZp \cup
  \{\infty \}$  that contains
  the element-wise valuations of $\bKe$, i.e.
  $\vM = [\val K_{ij}(\varepsilon)]_{i,j} $
\end{definition}
\begin{example}
  \label{ex:scaling-5x5}
  We return once again to the 5x5 matrix of \Cref{ex:5x5}. Its valuation matrix is:
\[ \vM =
  \begin{pmatrix}
    0 & 1 & 4 & \infty & \infty \\
    1 & 2 & 2 & \infty & \infty \\
    4 & 2 & 2 & 3 &\infty \\
    \infty & \infty & 3 & 4 & 4 \\
    \infty & \infty & \infty & 4 & 4 \\
  \end{pmatrix}
\]
  Recall that $\val(0) = \infty$.
\end{example}

Diagonal scalings need to be designed carefully so that:
\begin{equation}
  \label{eq:diag-scaling-repeated}
  \bKe = \bDe(\bH + o(1))\bDe
\end{equation}
as in \Cref{thm:ASE-H}. Equation \eqref{eq:diag-scaling-repeated}
implies:
\begin{equation}
  \label{eq:diag-scaling-bound}
  \val K_{ij}(\varepsilon) \geq \val \Delta(\varepsilon)_{ii} + \val \Delta(\varepsilon)_{jj}
\end{equation}
This leads to the following definition:
\begin{definition}[Valid scaling]\label{def:valid-scaling}
  We say $\bDe$ is a valid scaling for $\bKe$ if eq.
  (\ref{eq:diag-scaling-bound}) is verified for all $i,j$.
  We say that $\bDe$  is \emph{tight} at entry $i,j$ if:
  \[ \val K_{ij}(\varepsilon) = \val \Delta(\varepsilon)_{ii} + \val \Delta(\varepsilon)_{jj}\]
\end{definition}

Given a candidate scaling $\bDe$, one can form the
matrix $\vMt$ with entries
\[ \tilde{\Omega}_{i,j} = \val \Delta(\varepsilon)_{ii} + \val \Delta(\varepsilon)_{jj}   \]
Then $\bDe$ is a valid scaling iff
\begin{equation}
  \label{eq:validity-scaling}
  \vM - \vMt \geq 0
\end{equation}
element-wise. There is a systematic way of finding valid scalings that are
maximally tight, via the Hungarian algorithm, see e.g.
\cite{hook2019max,joswig2021essentials} for an introduction.

Once a valid scaling has been found, the matrix $\bH$ is easy to compute:
\begin{proposition}
  \label{prop:H-from-tightness}
  Given a valid scaling $\bDe$, the decomposition
  \[   \bKe = \bDe(\bH + o(1))\bDe \]
  is verified for
  \begin{equation}
    \label{eq:H-tight}
    H_{ij} =
    \begin{cases}
      \leadc K_{ij}(\varepsilon) \textrm{\ if } \bDe \textrm{\ is tight at\ } (i,j)  \\
      0 \textrm{\ otherwise}
    \end{cases}
  \end{equation}
  Recall that $\leadc K_{ij}$ is notation for the leading coefficient of the
  entry $K_{ij}$. 
\end{proposition}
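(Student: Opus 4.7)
The plan is to verify the decomposition entry-by-entry, since both $\bDe$ and $\bH$ are given explicitly. Since $\bDe$ is diagonal, the product $\bDe \bH \bDe$ has entries $(\bDe\bH\bDe)_{ij} = \varepsilon^{\nu_i+\nu_j} H_{ij}$, so establishing the decomposition reduces to proving the scalar identity
\[
\varepsilon^{-\nu_i-\nu_j} K_{ij}(\varepsilon) = H_{ij} + o(1) \qquad \text{for all } i,j,
\]
from which the matrix statement follows by equivalence of norms in finite dimensions.

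The verification splits naturally into the two cases used to define $\bH$. First, when $\bDe$ is tight at $(i,j)$, the valuation $\val K_{ij} = \nu_i+\nu_j$, so the power series expansion of $K_{ij}(\varepsilon)$ takes the form $\leadc(K_{ij})\,\varepsilon^{\nu_i+\nu_j} + (\text{higher order terms})$. Dividing by $\varepsilon^{\nu_i+\nu_j}$ leaves the constant $\leadc(K_{ij}) = H_{ij}$ plus a vanishing remainder. Second, when $\bDe$ is not tight at $(i,j)$, validity of the scaling gives $\val K_{ij} \geq \nu_i+\nu_j$, and non-tightness upgrades this to the strict inequality $\val K_{ij} > \nu_i+\nu_j$. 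Consequently $\varepsilon^{-\nu_i-\nu_j} K_{ij}(\varepsilon) \to 0$, which matches the definition $H_{ij}=0$.

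Combining both cases gives the entry-wise identity above, and hence $\bDe^{-1}\bKe\bDe^{-1} = \bH + o(1)$, which rearranges to $\bKe = \bDe(\bH + o(1))\bDe$ as claimed. There is no serious obstacle here; the only point that deserves care is the observation that in the non-tight case the inequality in the validity condition \eqref{eq:validity-scaling} must be \emph{strict} (otherwise the scaling would be tight), which is exactly what is needed to make the remainder go to zero after division by $\varepsilon^{\nu_i+\nu_j}$. The fact that the $\nu_i$ may be non-integer while $\val K_{ij}\in\ZZp\cup\{\infty\}$ plays no role, since we only need $\val K_{ij} - \nu_i - \nu_j > 0$ to conclude $o(1)$.
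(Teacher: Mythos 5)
Your proof is correct and follows exactly the approach the paper intends: the paper's own proof is a one-line statement that the decomposition ``follows directly by verifying that eq.~(\ref{eq:diag-scaling-repeated}) holds entry-wise for both tight and non-tight entries,'' which is precisely the case split you carry out in detail. Your additional remark about non-tightness upgrading the validity inequality to a strict one, and about the possibly non-integer $\nu_i$, fills in the small observations the paper leaves implicit.
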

\begin{proof}
  Follows directly by verifying that eq. \eqref{eq:diag-scaling-repeated} holds
  entry-wise for both tight and non-tight entries.
\end{proof}

\begin{example}[ \Cref{ex:5x5,ex:scaling-5x5}, continued]
\label{ex:5x5-continued}
We can compute the asymptotics of the matrix given in \Cref{ex:scaling-5x5} using the
tools of this section. Some of the computations are best done with the help of a
Computer Algebra System. 
Take $\bDe =
\diag(\varepsilon^0,\varepsilon^1,\varepsilon^1,\varepsilon^2,\varepsilon^2
)$. Then, in the notation of \Cref{def:valid-scaling}:
\[
  \vMt =
  \begin{pmatrix}
    \he{0} & \he{1} & 1 & 2 & 2 \\
    \he{1} & \he{2} & \he{2} & 3 & 3 \\
    1 & \he{2} & \he{2} & \he{3} & 3 \\
    2 & 3 & \he{3} & \he{4} & \he{4} \\
    2 & 3 & 3 & \he{4} & \he{4}
  \end{pmatrix}
\]
One can check that the scaling is valid by computing $\vM - \vMt$, which should
have non-negative entries. We highlight in blue and italic the entries for which
the scaling is \emph{tight} ($\tilde{\Omega}_{i,j} = \Omega_{i,j}$). Notice that
increasing the valuation of any of the entries in $\bDe$ makes the scaling
invalid. Decreasing the valuation keeps the scaling valid, but fewer 
entries are tight.

The next step is to find $\bH$ in eq. \eqref{eq:diag-scaling-repeated}, which we
can do by applying \Cref{prop:H-from-tightness}:
\begin{equation*}
  \label{eq:}
  \bH =
  \left(
    \setstretch{1.25}
    \begin{array}{c|cc|cc}
      1 & \frac{1}{2} & & &  \\ 
      \hline
      \frac{1}{2} & \frac{1}{4} &  \frac{1}{2} & & \\
        & \frac{1}{2} & 1 &  \frac{1}{2} &  \\
      \hline 
        & &  \frac{1}{2} & \frac{1}{8} &  \frac{1}{2}  \\
        & &  &    \frac{1}{2} & 1 \\
    \end{array}
  \right)
\end{equation*}
The block structure in $\bH$ corresponding to the valuations is highlighted. The
Schur complements for this block structure are
\[ \bS_0 = 1,\quad \bS_1 =
  \begin{pmatrix}
    \frac{1}{4} &  \frac{1}{2} \\
    \frac{1}{2} & 1 
  \end{pmatrix}
  -
  \begin{pmatrix}
    \frac{1}{2} \\
    0
  \end{pmatrix} 1^{-1}
  \begin{pmatrix}
    \frac{1}{2} &   0
  \end{pmatrix}
=
  \begin{pmatrix}
    0 & \frac{1}{2} \\
    \frac{1}{2} & 1
  \end{pmatrix}, \quad 
\]
and
\begin{align*}
  \bS_2 = 
  \begin{pmatrix}
    \frac{1}{8} & \frac{1}{2} \\
    \frac{1}{2} & 1
  \end{pmatrix} -
  \begin{pmatrix}
    \frac{1}{2} \\ 0
  \end{pmatrix}
  \begin{pmatrix}
    1 & \frac{1}{2} & 0 \\ 
    \frac{1}{2} & \frac{1}{4} &  \frac{1}{2}  \\
      0  & \frac{1}{2} & 1 
  \end{pmatrix}^{-1}
  \begin{pmatrix}
     \frac{1}{2} & 0 
  \end{pmatrix}=   \begin{pmatrix}
    \frac{1}{8} & \frac{1}{2} \\
    \frac{1}{2} & 1
  \end{pmatrix}
\end{align*}
Then we can easily see that the ASE given by \Cref{thm:ASE-H} agrees with the one given in \Cref{ex:5x5-ase}, and thus the limiting eigenvalues and eigenvectors can be computed by diagonalising
$\bS_0,\bS_1$ and $\bS_2$ using the classical formulas for $2 \times 2$
matrices (we do not detail these calculations).
\end{example}
\begin{example}
  \label{ex:3x3}
  Our final example for this section concerns a matrix for which \Cref{thm:ASE-H}
  fails to characterise all eigenvalues, even with the optimal scaling (we end up in case (1) of the theorem).
  Take the  matrix $\bK$ with valuation $\vM$:
  \[ \bK =
    \begin{pmatrix}
      1 & \varepsilon & \varepsilon \\
      \varepsilon & \varepsilon^3 & -\varepsilon^3 \\
      \varepsilon & -\varepsilon^3 & \varepsilon^3 \\
    \end{pmatrix}, \quad \vM =
    \begin{pmatrix}
      0 & 1 & 1 \\
      1 & 3 & 3 \\
      1 & 3 & 3
    \end{pmatrix}.
  \]
  With the scaling $\bD = \diag(\varepsilon^0,\varepsilon^1,\varepsilon^1)$, we
have    \[ \vMt =
    \begin{pmatrix}
      \he{0} & \he{1} & \he{1} \\
      \he{1} & 2 & 2 \\
      \he{1} & 2 & 2
    \end{pmatrix}.
  \]
The tight entries are highlighted in blue, and it can easily be checked
  that the scaling cannot be improved.
  \Cref{prop:H-from-tightness} gives:
  \[
    \bH =
    \left( 
    \begin{array}{c|cc}
      1 & 1 & 1\\
      \hline
      1 & 0 & 0 \\
      1 & 0 & 0
    \end{array}
  \right)
  \]
  The Schur complements are $\bS_0=1$, $\bS_1 = - 
  \left(\begin{smallmatrix}
    1 & 1 \\
    1 & 1
  \end{smallmatrix}\right)
  $. Since $\bS_1$ is of rank one, there is a single 
  eigenvalue of valuation $2$. \Cref{thm:ASE-H} gives:
  \[
    \Kbe =
    \begin{pmatrix}
      1 & & \\
        &0 & \\
      & & 0\\
    \end{pmatrix}
    - \varepsilon^2
    \begin{pmatrix}
      0 & & \\
        & 1 & 1 \\
        & 1 & 1\\
    \end{pmatrix}
    + \O(\varepsilon^3)
  \]
  The last eigenvalue has valuation $> 2$ but is not identified. For this we
  need a stronger theorem, specifically 
  \Cref{thm:ASE-generalised-kernel-form}. We revisit this computation in 
  \Cref{ex:3x3-revisited}.
\end{example}
The rest of this section contains the proof of \Cref{thm:ASE-H}, and
readers can skip ahead to  \cref{sec:generalised-kernel-matrices} for a
generalisation of the theorem that is more widely applicable.

\subsection{Proof of the main theorem}
\label{sec:reg-inverse-diag-scaled}

We are now ready to give the proof of \Cref{thm:ASE-H}, which is based on the properties of ASE and 
regularised inverses, as introduced in \cref{sec:reg-inverse}. 
In particular, we show that the regularised inverses inherit a block structure from the diagonal scaling,
which leads to the required result.

\begin{proof}[Proof of {\Cref{thm:ASE-H}}]
With some abuse of notation, let $\rInvK{s}{\tau}$ denote the regularized inverse.
First, let us note that use an alternative form
\[
\rInvK{s}{\tau} = \bKe(\bKe + \tau\varepsilon^s\bI)^{-1}= \bI - \tau \varepsilon^s(\bKe +\tau\varepsilon^{s}\bI)^{-1},
\]
which we prefer in the proof because it is symmetric. 
Now, let us take a particular $s \ge 0$  and compute directly the leading term of  $\rInvK{s}{\tau}$.
For a given $s$, define
\[
\ell = \ell(s) = \underset{\nu_i \leq \frac{s}{2}}{\max i},
\]
to be the index of the last block in $\bDe$ to have valuation less than or equal to $\frac{s}{2}$.
We consider two cases:
\begin{itemize}
\item \emph{matching}: i.e.,  $\nu_\ell = \frac{s}{2}$, i.e. $\frac{s}{2}$ matches one of the valuations $\nu_i$;
\item \emph{non-matching}: in which case $\nu_\ell<\frac{s}{2}$.
\end{itemize}
To compute the leading term of the regularized inverse, we define a modified diagonal scaling where each valuation greater than $\nu_\ell$ is clipped to $\frac{s}{2}$:
\begin{equation}
\label{eq:modified-scaling}
\bDet=\diag( \beps^{\min{(\nu_0,\frac{s}{2})}}, \dots, \dots, \beps^{\min{(\nu_r,\frac{s}{2})}} )
\end{equation}
This is still a valid diagonal scaling \eqref{eq:H-scaling} (although not necessarily optimal) so that $\bKe = \bDet (\bHt + o(1)) \bDet$, 
where the coefficient matrix has the following expression:
\[
\bHt =
\begin{pmatrix}
\bH_{\leq \ell, \leq \ell} & \zeroes \\
\zeroes  &  \zeroes
\end{pmatrix}.
\]
Injecting the definition of $\bKe$  leads to:
\begin{equation}\label{eq:reginv-dscaling-symmetric}
\rInvK{s}{\tau} = \bI - \tau \varepsilon^s \left( \bDet(\bHt+o(1))\bDet + \tau\varepsilon^s \bI \right)^{-1}.
\end{equation}
Next, by putting  $\tau\varepsilon^s$  inside the brackets, we  can bring  \eqref{eq:reginv-dscaling-symmetric} into the form \begin{equation}\label{eq:reginv-mod-scaling}
\rInvK{s}{\tau} = \bI - \tau \left(\bDet(\bHh+o(1))\bDet\right)^{-1},
\end{equation}
where the exact shape of  the coefficient $\bHh$ depends on whether we are in the matching or non-matching case:
\[
\bHh =
\begin{cases}
\begin{pmatrix}
\bH_{\leq \ell, \leq \ell} & \zeroes \\
\zeroes  & \tau \bI
\end{pmatrix}, & \mathrm{\ if\ } \nu_\ell<\frac{s}{2}, \\
\begin{pmatrix}
\bH_{< \ell, < \ell} & \bH_{< \ell,\ell} & \zeroes \\
\bH_{ \ell, <\ell } &   \bH_{\ell,\ell}+\tau \bI & \zeroes \\
\zeroes  & \zeroes & \tau \bI
\end{pmatrix}, & \nu_\ell = \frac{s}{2}.
\end{cases}
\]
Next, we use the fact that $\bDet$ is square to pull it out of the inverse in eq. \eqref{eq:reginv-mod-scaling}, and obtain:
\begin{equation}
\label{eq:reginv-mod-scaling-2}
\rInvK{s}{\tau} = \bI - \tau\varepsilon^{\frac{s}{2}}\bDet^{-1}\left( \bHh + o(1) \right)^{-1}\bDet^{-1}\varepsilon^{\frac{s}{2}}
\end{equation}
and we note that block $i$ in $\bDet^{-1}\varepsilon^{\frac{s}{2}}$ is either
$o(1)$ if $\nu_i<  \frac{s}{2}$, or $1+o(1)$ otherwise. 
Note that the matrix  $\bHh$ is invertible under the assumptions in the theorem (i.e., invertibility of $\bH_{\le \ell,\le \ell}$ for the non-matching case and invertibility of both $\bH_{<\ell,<\ell}$ and $\bS_\ell + \tau \bI_{c_\ell}$ for the matching case),  so that:
\begin{equation}
  \label{eq:reginv-mod-scaling-3}
\rInvK{s}{\tau} = \bI - \tau 
    \begin{pmatrix}
      \zeroes & \zeroes \\
      \zeroes & \bI
    \end{pmatrix}
   \left( \bHh^{-1} + o(1) \right)     \begin{pmatrix}
                                                \zeroes & \zeroes \\
                                                \zeroes & \bI
                                              \end{pmatrix}
 + o(1)
\end{equation}
Here multiplication to the left and right by the matrix
$ \begin{pmatrix}
    \zeroes & \zeroes \\
    \zeroes & \bI
  \end{pmatrix} $ selects the blocks with valuation $\leq \frac{s}{2}$. We
  therefore only need to compute
the relevant part in $\bHh^{-1}$, which we can do by block matrix inversion:
\[ 
  \bHh^{-1} =
  \begin{cases}
    \begin{pmatrix}
      * & * \\
      *  & \tau^{-1} \bI
    \end{pmatrix}, & \mathrm{\ if\ } \nu_\ell<\frac{s}{2}, \\
    \begin{pmatrix}
      * & * & * \\
      * &   (\bS_{\ell}+\tau \bI)^{-1} & 0 \\
      *  & 0 & \tau^{-1} \bI
    \end{pmatrix} & \mathrm{\ otherwise}.
  \end{cases}
\]
Finally, inserting into eq. \eqref{eq:reginv-mod-scaling-3} and simplifying, we obtain the following expressions:
\begin{itemize}
\item in case $\nu_\ell< \frac{s}{2}$ (non-matching ) and $\bH_{\le \ell, \le \ell}$ invertible, then for all $\tau$
\begin{equation}\label{eq:reginv-scaling-nonmatching}
\rInvK{s}{\tau}  = 
\begin{pmatrix}
\bI_{b} & \zeroes  \\
\zeroes & \zeroes
\end{pmatrix}
+ o(1),
\end{equation}
where  $b = \sum_{i=0}^{\ell} b_i$.

\item in case $\nu_\ell = \frac{s}{2}$ (matching) and $\bH_{<\ell, <\ell}$ invertible (so that the Schur complement $\bS_\ell$ is well defined), then
\begin{equation}\label{eq:reginv-scaling-matching}
\rInvK{s}{\tau}  = 
 \begin{pmatrix}
\bI_{b-b_\ell} & \zeroes  &\zeroes \\
\zeroes & \bS_\ell(\bS_\ell+\tau\bI)^{-1} & \zeroes \\
\zeroes & \zeroes &\zeroes 
\end{pmatrix}
+ o(1),
\end{equation}
for all $\tau$ such that the inverse exists (e.g., for all positive $\tau$ in the SPD case).
\end{itemize}
The equations  \eqref{eq:reginv-scaling-nonmatching} and \eqref{eq:reginv-scaling-matching} show that (for $\Kb$ given in \eqref{eq:ASE-H} or \eqref{eq:ASE-H-full})
\[
\lim_{\varepsilon \to 0} \rInvK{s}{\tau} =   \bM_{s,\tau}(\Kb) 
\]
where $s \le \frac{\nu_j}{2}$ and $j$ is defined in the statement of the theorem
(thus $\ell \le j$ in the above equations).

Now, invoking \Cref{prop:RI-asymp-equiv}, we can already conclude the following:
\begin{itemize}
\item If $\bH_{\le
    j, \le j}$, for $j \le p$  is invertible, then $\bKe \equivpar{2\nu_j} \bC(\varepsilon)$, with $\bC$ given by the right-hand-side of
  \eqref{eq:ASE-H}
\item Further, if $\bH$ is invertible,   $\bKe \equivinf \bC(\varepsilon)$, with $\bC$ given by the right-hand-side of
  \eqref{eq:ASE-H-full}. 
\end{itemize}

To finish the proof it is enough to check that eq. \eqref{eq:ASE-H} or
\eqref{eq:ASE-H-full} indeed describe (truncated) ASEs. Here we invoke 
\Cref{prop:how-to-recognise-an-ASE}. It is easy to verify that the matrices in
eqs. \eqref{eq:ASE-H} and \eqref{eq:ASE-H} can be rotated into diagonal matrices
with homogeneous entries, and that these equations indeed described the
(truncated) ASE (which is unique by  \Cref{prop:trunc-pse-is-proj}).
 \end{proof}

\section{Generalised kernel form}
\label{sec:generalised-kernel-matrices}

The results of the previous section are only directly useful if the matrix has
entries with different orders of magnitude in $\varepsilon$, so that a non-trivial
scaling matrix can be used. An example of a matrix that only has the trivial
scaling are kernel matrices. Recall (eq. \eqref{eq:kernel-example-rbf})  that
kernel matrices have the following form:
\[ \bKe = \left[ \psi\left(\varepsilon\norm{\bx_i-\bx_j}\right)
  \right]_{i,j=1}^n \] for some function $\psi$. In particular, the popular
``Gaussian'' kernel matrix has $\psi(r) = \exp(-r^2)$.

In the case of the Gaussian kernel, inserting the expansion $\exp(-r^2) =
1-r^2 +\frac{1}{2}r^4 - \dots $, the kernel matrix can be expanded as a series
in $\ep2$, where each term is a matrix of distances raised to some power:
\[ \bK_{2l} = \frac{1}{l!} \left[ (x_i-y_i)^{2l} \right]_{i=1,j=1}^n  \]
Using the binomial theorem, we can expand the distances in terms of monomials:
\[ \bK_{2l} = \frac{1}{l!} \left[ \sum_{q=0}^{2l} \binom{2l}{q} (-1)^q
    x_i^{q}y_i^{2l-q} \right]_{i=1,j=1}^n  \]
If we note $\bv_i = [x_j^i]_{j=1}^n$, this results in the expansion:
\[ \bKe = \bv_0\bv_0^\T - \ep2 \left( \bv_2\bv_0^\T-2\bv_1\bv_1^\T +
    \bv_0\bv_2^\T \right) + \ldots\]
Recall that $\bv_0 = \ones$, the constant vector, so that every entry in $\bKe$
is $\O(1)$ (more precisely, has zero valuation). We can only use the trivial
scaling matrix, and so \Cref{thm:ASE-H} only tells us about the
eigenvalues and eigenvectors of valuation $0$. There is only one such
eigenvalue, since $\bK_0=\bv_0\bv_0^\T$ is of rank one. 

Thus, \Cref{thm:ASE-H} is not powerful enough to directly characterise
the ASE of all analytic perturbations. Better results are needed, and this
section we will  consider a generalisation of \eqref{eq:H-scaling}, and characterise the ASE for matrices of the form:
\begin{equation}
  \label{eq:generalised-kernel}
  \bKe = \bV \bDe \left( \bW + o(1) \right)\bDe \bV^\T
\end{equation}
Although this form may seem abstract, it includes very general kernel
matrices in the flat limit. We will begin by defining these matrices in more
detail, explaining what $\bV$ and $\bW$ correspond to in eq.
\eqref{eq:generalised-kernel}, and then derive the ASE of matrices in this form. 

\subsection{ASE of matrices in generalised kernel form}
\label{sec:ase-generalised-kernels}

We need to make the notation more precise. Again, we define the scaling matrix 
similarly to \eqref{eq:Delta-scaling}
\begin{equation}\label{eq:Delta_a}
\bDe = \matr{\Delta_a} (\varepsilon)=\diag( \underbrace{\varepsilon^{\nu_0},\dots, \varepsilon^{\nu_0}}_{a_0}, \underbrace{\varepsilon^{\nu_1}, \dots
  \varepsilon^{\nu_1}}_{a_1}, \ldots,\underbrace{\varepsilon^{\nu_p} \dots \varepsilon^{\nu_p}}_{a_p}),
\end{equation}
where  each valuation $\nu_i$ is repeated $a_i$ times and $\nu_0 < \nu_1 < \cdots < \nu_p$.

We assume that $\bV \in \RR^{n\times \sum a_i}$  and is partitioned according to the valuations, as
\begin{equation}\label{eq:V}
\bV =  \begin{pmatrix}  \bV_0 & \bV_1 & \ldots & \bV_p \end{pmatrix},
\end{equation}
so that $\bV_i \in \RR^{n \times a_i}$.
We will use the QR factorisation of $\bV$, which we arrange in the block-upper triangular form:
\begin{equation}
  \label{eq:block-QR}
  \bV = \bQ \bR = 
  \begin{pmatrix}
    \bQ_0 & \bQ_1 & \ldots & \bQ_p
  \end{pmatrix}
  \begin{pmatrix}
    \bR_{0,0} & \bR_{0,1} & \ldots &  \ldots & \bR_{0,p} \\
              & \bR_{1,1} & \bR_{1,2}  & \ldots & \bR_{1,p} \\
              & & \bR_{2,2} & \ldots & \bR_{2,p} \\
              & & & \ddots & \vdots \\
              & & &            & \bR_{p,p}
  \end{pmatrix},
\end{equation}
so that the blocks $\bR_{i,j} \in \RR^{b_i \times a_j}$ and $\matr{Q}_r \in \RR^{n \times b_i}$.

\begin{remark}\label{rem:assumptions_V}
Under the assumption that $\rank \bV = n$, the numbers $b_i$ sum to $n$, and $\bQ \in \RR^{n \times n}$ is a square matrix. Moreover $b_i$ measures the new dimensions introduced by $\bV_i$, i.e.,
\[
b_i = 
\begin{cases}
\rank \bV_0, & i = 0, \\
\rank \bV_{\le i} -\rank \bV_{< i} , & i > 0, \\
\end{cases}
\]
where we assume that $b_i > 0$ (i.e. $\rank \bV_{\le i} > \rank \bV_{< i}$).
\end{remark}
\color{black}

The matrix $\bW  \in \RR^{\sum a_i \times \sum a_i}$ in eq.
\eqref{eq:generalised-kernel} is also divided into blocks
\[
  \bW =
  \begin{pmatrix}
    \bW_{0,0} & \bW_{0,1} & \dots & \bW_{0,p} \\
    \bW_{1,0} & \bW_{1,1} & \dots & \bW_{1,p}\\
    \vdots & \vdots & \dots & \dots \\
    \bW_{p,0} & \bW_{p,1} & \dots & \bW_{p,p}
  \end{pmatrix}.
\]
according to the structure of the valuations in \eqref{eq:Delta_a}.

Finally, we define the following matrix:
\begin{equation}
  \label{eq:H-triangular-asymptotics}
  \bH =
  \begin{pmatrix}
    \bR_{0,0} & & & & \\
              & \bR_{1,1} & & & \\
              & & & \ddots &  \\
              & & & & \bR_{p,p}
  \end{pmatrix}
  \bW
  \begin{pmatrix}
    \bR_{0,0}^\T & & & & \\
                & \bR_{1,1}^\T & & & \\
                & & & \ddots &  \\
                & & & & \bR_{p,p}^\T
  \end{pmatrix}
\end{equation}
The sequence of Schur complements in $\bH$ are defined in the same way as for
\Cref{thm:ASE-H}, see \eqref{eq:ScH}.

With this notation, we obtain the following generalisation of \Cref{thm:ASE-H}:
\begin{theorem}
  \label{thm:ASE-generalised-kernel-form}
Let $\bKe$ be given in \eqref {eq:generalised-kernel}, with $\bW$ invertible,  $\bH$ be as in \eqref{eq:triangular-asymptotics},
and $\bV$ satisfying assumptions of \Cref{rem:assumptions_V}. Let  $\bS_0, \ldots, \bS_p$ denote the Schur complements in $\bH$.
\begin{enumerate}
\item Then the ASE is given by
  \begin{equation}
    \label{eq:kbar-generalised-kernel}
    \Kbe = \sum_{i=0}^p  \varepsilon^{2 \nu_i} \bQ_i \bS_i \bQ_i^\T.
  \end{equation}
\item We have $\bS_0 = \bW_{0,0}$, and for any $j>0$ such that $\bV_{\le j-1}$ is full  column rank, $\bS_j$  admits a simpler expression via Schur complements of $\bW$
  \begin{equation}
    \label{eq:schur-compl-H}
    \bS_j=\bR_{j,j} \left( \bW_{j,j} - \bW_{j,<j}(\bW_{< j, <j})^{-1}\bW_{< j,j}  \right)\bR_{j,j}^\T,
  \end{equation}
where 	 the matrices $\bW_{j,< j}$ and $\bW_{<j,j} =\bW_{j,< j}^{\T}$ are defined as
\[
\bW_{j,<j} = \begin{pmatrix}\bW_{j,0}  & \bW_{j,1}  & \cdots & \bW_{j,j-1} \end{pmatrix}.
\]
\end{enumerate}
\end{theorem}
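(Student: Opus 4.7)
The plan is to reduce Theorem~\ref{thm:ASE-generalised-kernel-form} to Theorem~\ref{thm:ASE-H} via an orthogonal change of basis induced by the QR factorisation of $\bV$, and then derive the simplified Schur-complement formula by direct block algebra.

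First, using Lemma~\ref{lem:properties-ase}(4) and writing $\bV = \bQ\bR$ with $\bQ \in \RR^{n\times n}$ orthogonal, I would replace $\bKe$ by its conjugate $\bQ^\T \bKe \bQ = \bR \bDe (\bW + o(1)) \bDe \bR^\T$. Because $\bR$ is block upper triangular, one can factor $\bR \bDe = \matr{\Delta}_b(\varepsilon)\,\bR'(\varepsilon)$ by pulling $\varepsilon^{\nu_i}$ out of the $i$-th block-row, where $\matr{\Delta}_b(\varepsilon)$ is the scaling with block sizes $b_i$ and valuations $\nu_i$, and the $(i,j)$-block of $\bR'(\varepsilon)$ equals $\bR_{i,j}\,\varepsilon^{\nu_j-\nu_i}$ for $j \ge i$ (and zero otherwise). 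Since $\nu_j > \nu_i$ for $j > i$, we have $\bR'(\varepsilon) = \bR_{\mathrm{diag}} + o(1)$ with $\bR_{\mathrm{diag}} = \blkdiag(\bR_{0,0},\ldots,\bR_{p,p})$. Substitution yields
\[
\bQ^\T \bKe \bQ \;=\; \matr{\Delta}_b(\varepsilon)\bigl(\bR_{\mathrm{diag}}\,\bW\,\bR_{\mathrm{diag}}^\T + o(1)\bigr)\matr{\Delta}_b(\varepsilon),
\]
which is precisely the form~\eqref{eq:H-scaling} required by Theorem~\ref{thm:ASE-H}, with inner matrix equal to the $\bH$ of~\eqref{eq:H-triangular-asymptotics}.

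Under the standing hypothesis that the Schur complements $\bS_0,\ldots,\bS_p$ of $\bH$ exist, Theorem~\ref{thm:ASE-H} (case~2) delivers the block-diagonal ASE $\sum_i \varepsilon^{2\nu_i} \bZ_i \bS_i \bZ_i^\T$; conjugating back by $\bQ$ and observing that $\bQ\bZ_i = \bQ_i$ gives~\eqref{eq:kbar-generalised-kernel}. For part~(2), I would substitute $\bH_{k,\ell} = \bR_{k,k}\bW_{k,\ell}\bR_{\ell,\ell}^\T$ into the Schur-complement formula~\eqref{eq:ScH}. The assumption that $\bV_{\le j-1}$ has full column rank forces $b_i = a_i$ for all $i<j$, so each $\bR_{i,i}$ with $i<j$ is square and invertible, hence $\bR_{<j,<j} = \blkdiag(\bR_{0,0},\ldots,\bR_{j-1,j-1})$ is invertible. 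The factors of $\bR_{<j,<j}$ then telescope via
\[
\bR_{<j,<j}^\T\bigl(\bR_{<j,<j}\,\bW_{<j,<j}\,\bR_{<j,<j}^\T\bigr)^{-1}\bR_{<j,<j} \;=\; \bW_{<j,<j}^{-1},
\]
collapsing the expression for $\bS_j$ to~\eqref{eq:schur-compl-H}.

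The main technical point is the block-matrix algebra in the reduction step: one has to verify that the generally rectangular shape of $\bR_{i,j}$ (with $b_i \le a_j$, since $\bR_{i,i}$ need not be square when $\bV_i$ is not full column rank) still produces a clean diagonally-scaled limit. This works precisely because the valuation gaps $\nu_j-\nu_i$ are strictly positive for $j>i$, so the off-diagonal blocks of $\bR'(\varepsilon)$ vanish in the limit regardless of their shape. Once this reduction is in place, both conclusions follow from Theorem~\ref{thm:ASE-H} combined with elementary block-Schur algebra.
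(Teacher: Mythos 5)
Your proposal is correct and follows essentially the same route as the paper: conjugate by $\bQ$ from the block QR factorisation, reduce to the diagonally-scaled form \eqref{eq:H-scaling} with $\bH = \bR_{\mathrm{diag}}\bW\bR_{\mathrm{diag}}^\top$, apply Theorem~\ref{thm:ASE-H}, and finish with block Schur-complement algebra. The one place you diverge from the paper's presentation is the key reduction step: the paper states it as a separate lemma (Lemma~\ref{lem:triangular-asymptotics}) and proves it by directly computing the valuation of each $(i,j)$-block of $\bR\bDe(\bW+o(1))\bDe\bR^\top$, whereas you obtain it by the cleaner factorisation $\bR\bDe = \matr{\Delta}_b(\varepsilon)\bR'(\varepsilon)$ with $\bR'(\varepsilon) = \bR_{\mathrm{diag}} + o(1)$, which packages the same asymptotics more transparently (and, incidentally, sidesteps a typo in the paper's block computation). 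One small notational slip: you write $\bR_{<j,<j}$ for $\blkdiag(\bR_{0,0},\ldots,\bR_{j-1,j-1})$, but in the paper's convention $\bR_{<j,<j}$ denotes the block upper-triangular leading principal submatrix of $\bR$; only the block diagonal piece makes your telescoping identity $\bA^\top(\bA\bW_{<j,<j}\bA^\top)^{-1}\bA = \bW_{<j,<j}^{-1}$ valid, so you should introduce a fresh symbol for it as the paper does.
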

The proof is deferred to the next subsection. 
\begin{example}[\Cref{ex:3x3} revisited]
  \label{ex:3x3-revisited}
  \Cref{ex:3x3} is a case where \Cref{thm:ASE-H} fails to characterise
  all eigenvalues. We show that \Cref{thm:ASE-generalised-kernel-form}
  succeeds.
  In this example we have:
  \[ \bKe =
    \begin{pmatrix}
      1 & \varepsilon & \varepsilon \\
      \varepsilon & \varepsilon^3 & -\varepsilon^3 \\
      \varepsilon & -\varepsilon^3 & \varepsilon^3 \\
    \end{pmatrix}
  \]
  One can check that $\bKe$ can also be written as
  \[
    \bKe = \bV
    \begin{pmatrix}
      1 & &\\ 
        & \varepsilon &\\
      & & \varepsilon^{\frac{3}{2}}
    \end{pmatrix}
    \begin{pmatrix}
      1 & 1& 0\\ 
       1 &0 & 0\\
       0 &0 & 1
    \end{pmatrix}
    \begin{pmatrix}
      1 & &\\ 
        & \varepsilon &\\
        & & \varepsilon^{\frac{3}{2}}
    \end{pmatrix}
  \]
  with
  \[
    \bV =
    \begin{pmatrix}
      1 & 0 & 0 \\
      0 & 1 & -1 \\
      0 & 1 & 1
    \end{pmatrix}.
  \]
  This is a form compatible with \Cref{thm:ASE-generalised-kernel-form}.
  Since $\bV$ already has orthogonal columns we can write
  \[ \bV = \bQ \bR =     \left( \begin{array}{c|c|c}
    1 & 0 & 0 \\
    0 & \frac{1}{\sqrt{2}} & -\frac{1}{\sqrt{2}} \\
    0 & \frac{1}{\sqrt{2}} & \frac{1}{\sqrt{2}}
  \end{array} \right)
\left( 
  \begin{array}{c|c|c}
    1 & & \\
      & \sqrt{2} & \\
    & & \sqrt{2}
  \end{array} \right)
  .
\]
We highlight the block structure corresponding to the successive valuations $0,1,\frac{3}{2}$.
In the notation of this section, we have (eq. (\ref{eq:H-triangular-asymptotics})):
\[
  \bH =  \left(      \setstretch{1.25}
 \begin{array}{c|c|c}
    1 & & \\
    \hline
      & \sqrt{2} & \\
    \hline
      & & \sqrt{2}
  \end{array} \right)
\left( \setstretch{1.25}
  \begin{array}{c|c|c}
    1 & 1  & \\
    \hline
    1  &  & \\
    \hline
      & & 1
  \end{array}  \right)
\left( \setstretch{1.25}
\begin{array}{c|c|c}
  1 & & \\
  \hline
    & \sqrt{2} & \\
  \hline
    & & \sqrt{2}
\end{array} \right)
=
\begin{pmatrix}
  1 & \sqrt{2} & \\
  \sqrt{2} & & \\
  & & 2
\end{pmatrix}
\]
The Schur complements in $\bH$ are $\bS_0=1, \bS_1= -2, \bS_2=2$. Applying
\Cref{thm:ASE-generalised-kernel-form}, the ASE of $\bK$ equals:
\[ \Kbe = \bq_0 \bq_0^\T - 2 \varepsilon^2  \bq_1 \bq_1^\T + 2 \varepsilon^3  \bq_2
  \bq_2^\T =     \begin{pmatrix}
    1 & & \\
      & & \\
      & & \\
  \end{pmatrix}
  - \varepsilon^2
  \begin{pmatrix}
    & & \\
    & 1 & 1 \\
    & 1 & 1\\
  \end{pmatrix}
  + \varepsilon^3
  \begin{pmatrix}
    & & \\
    & 1 & -1 \\
    & -1 & 1\\
  \end{pmatrix}
 \]
 Compared to the previous attempt, we have managed to identify the eigenvalue of
 order $\O(\varepsilon^3)$. 
\end{example}

\subsection{Proof of \Cref{thm:ASE-generalised-kernel-form}}
\label{sec:proof-of-theorem-ASE-kernel-form}

We begin with a lemma that allows us to convert the form of eq. (\ref{eq:generalised-kernel})
to the simpler form we used previously in section \ref{sec:diagonal-scalings},
that of eq. (\ref{eq:H-scaling}). The lemma already appears in a different form
in \cite{BarthelmeUsevich:KernelsFlatLimit}.
\begin{lemma}
  \label{lem:triangular-asymptotics}
Let $\bV$, $\bQ$, $\bR$ be as in \eqref{eq:V},  \eqref{eq:block-QR}, and $\bH$
as in \eqref{eq:H-triangular-asymptotics}. 
Then we have the following asymptotic equivalence:
  \begin{equation}
    \label{eq:triangular-asymptotics}
    \bR \matr{\Delta_a}(\varepsilon) (\bW + o(1))\matr{\Delta_a}(\varepsilon) \bR^\T = \matr{\Delta_b}(\varepsilon) (\bH + o(1)) \matr{\Delta_b}(\varepsilon)
  \end{equation}
  where $\matr{\Delta_a}(\varepsilon)$ and $\matr{\Delta_b}(\varepsilon)$ are defined in \eqref{eq:Delta_a} and \eqref{eq:Delta-scaling}, respectively, and
\end{lemma}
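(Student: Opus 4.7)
The plan is to show that $\bR \matr{\Delta_a}(\varepsilon)$ can be factored (up to $o(1)$) as $\matr{\Delta_b}(\varepsilon)$ times the block diagonal of the $\bR_{i,i}$'s, and then substitute this factorization into both sides.

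First, I would compute $\matr{\Delta_b}(\varepsilon)^{-1} \bR \matr{\Delta_a}(\varepsilon)$ block by block. Because $\bR$ is block-upper-triangular in the QR factorization \eqref{eq:block-QR}, the $(i,j)$ block of $\bR \matr{\Delta_a}(\varepsilon)$ is $\bR_{i,j}\varepsilon^{\nu_j}$ when $i\le j$ and zero otherwise. Multiplying on the left by $\matr{\Delta_b}(\varepsilon)^{-1}$ turns the $(i,j)$ block into $\varepsilon^{\nu_j-\nu_i}\bR_{i,j}$ for $i\le j$. Since $\nu_0<\nu_1<\cdots<\nu_p$, the strictly upper-triangular blocks all have positive exponent and hence vanish as $\varepsilon\to 0$, while the diagonal blocks reduce to $\bR_{i,i}$. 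This yields the key identity
\begin{equation*}
\bR\,\matr{\Delta_a}(\varepsilon) \;=\; \matr{\Delta_b}(\varepsilon)\bigl(\blkdiag(\bR_{0,0},\ldots,\bR_{p,p}) + o(1)\bigr).
\end{equation*}
Transposing gives the analogous identity on the right-hand side of the product.

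Next I would substitute this identity into the left-hand side of \eqref{eq:triangular-asymptotics}. Writing $\bR_{\mathrm{d}}=\blkdiag(\bR_{0,0},\ldots,\bR_{p,p})$ for brevity, the left-hand side becomes
\begin{equation*}
\matr{\Delta_b}(\varepsilon)\bigl(\bR_{\mathrm{d}}+o(1)\bigr)\bigl(\bW+o(1)\bigr)\bigl(\bR_{\mathrm{d}}^{\T}+o(1)\bigr)\matr{\Delta_b}(\varepsilon).
\end{equation*}
Multiplying out the three middle factors and collecting $o(1)$ terms (noting that $\bR_{\mathrm{d}}$, $\bW$, $\bR_{\mathrm{d}}^{\T}$ are all $\O(1)$ matrices independent of $\varepsilon$) produces $\bR_{\mathrm{d}}\bW\bR_{\mathrm{d}}^{\T}+o(1)=\bH+o(1)$ by the definition \eqref{eq:H-triangular-asymptotics} of $\bH$. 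This is exactly the right-hand side of \eqref{eq:triangular-asymptotics}.

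I do not anticipate significant obstacles; the only step that needs care is the block-by-block check of the exponent accounting that gives the key factorization of $\bR\,\matr{\Delta_a}(\varepsilon)$, together with a careful handling of the $o(1)$ errors as they propagate through the triple product. One should also note that $\matr{\Delta_b}(\varepsilon)^{-1}$ is only used as a formal bookkeeping device (for $\varepsilon>0$ it is well defined, and the final identity is stated in a form that does not require invertibility).
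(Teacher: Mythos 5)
Your proof is correct and, at its core, relies on the same idea as the paper's: the block upper-triangularity of $\bR$ combined with the monotonicity $\nu_0<\cdots<\nu_p$ forces all off-diagonal blocks in the scaled product to vanish, leaving $\bR_{i,i}\bW_{i,j}\bR_{j,j}^\top$ as leading coefficients. The paper's proof computes the $(i,j)$ block of $\bR\,\matr{\Delta_a}(\varepsilon)(\bW+o(1))\matr{\Delta_a}(\varepsilon)\bR^\top$ directly as a double sum and reads off the leading term; you instead extract the intermediate factorization $\bR\,\matr{\Delta_a}(\varepsilon)=\matr{\Delta_b}(\varepsilon)\bigl(\blkdiag(\bR_{0,0},\ldots,\bR_{p,p})+o(1)\bigr)$ and then push it through the triple product. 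The two routes are the same exponent accounting repackaged, but your version isolates the key structural fact in a reusable one-line identity and makes the absorption of the $o(1)$ terms (which are sandwiched between $\varepsilon$-independent, hence bounded, matrices) more transparent; it also sidesteps the index bookkeeping that the paper does inline. One small point worth stating explicitly, which you only hint at: the argument requires that $\matr{\Delta_a}$ and $\matr{\Delta_b}$ share the same set of exponents $\nu_0,\ldots,\nu_p$ (only the block widths $a_i$ vs.\ $b_i$ differ), which is what makes $\varepsilon^{\nu_j-\nu_i}$ the exponent of block $(i,j)$ and guarantees it is strictly positive off the diagonal.
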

\begin{proof}
  We denote $\bA(\varepsilon) = \bR \bDe (\bW + o(1))\bDe \bR^\T$, and $\bA_{i,j}$
  the block $i,j$ in the partitioning induced by the valuations in the scaling
  matrix $\bDe$.
  \begin{align*}
    \bA_{i,j}(\varepsilon) &= \sum_{k\geq i,l \geq j} \bR_{i,k}\varepsilon^{\nu_i}
                          (\bW_{k,l}+o(1)) (\bR)^\T_{l,j} \varepsilon^{\nu_j} \\
                        &= \varepsilon^{\nu_i} (\bR_{i,i}
                          \bW_{i,i} (\bR_{i,i})^\T + o(1)) \varepsilon^{\nu_j} 
  \end{align*}
  since the valuations in $\bDe$ are increasing. Eq. 
  (\ref{eq:H-triangular-asymptotics}) is exactly this result expressed for all
  blocks. 
\end{proof}

Given this lemma, the proof of \Cref{thm:ASE-generalised-kernel-form} is straightforward.
\begin{proof}[Proof of \Cref{thm:ASE-generalised-kernel-form}]
For the first part of the theorem (statement (1)), consider the matrix
\[ \bK'(\varepsilon) = \bQ^\T \bKe \bQ = \bR \bDe (\bH+o(1))^{-1} \bDe \bR^\T \]
and note that by \Cref{lem:properties-ase}, $\underline{\bK'(\varepsilon)} =
\bQ^\T \Kbe  \bQ$. \Cref{lem:triangular-asymptotics} lets us apply theorem
\Cref{thm:ASE-H} to $\bK'$. We obtain eq.
(\ref{eq:kbar-generalised-kernel}) via $\Kbe =
\bQ \underline{\bK'(\varepsilon)} \bQ^\T $.

For the second part (statement (2)), note that in this case since $\rank \bR = \rank \bV = n$, all the diagonal blocks $\bR_{i,i}$  are full row rank.
This implies that $\bH$ is invertible (thanks to invertibility of $\bW$). The simplified expression for the
Schur complement $\bS_i$ (eq. \eqref{eq:schur-compl-H}) can be obtained as
follows. If  $\bV_{\le j-1}$ is full  column rank, then $\bR_{i,i}$ are square for $j <i$ and the matrix
\[
\bA = \diag (\bS_0,\ldots, \bS_{j-1})
\]
is invertible.
Then the Schur complement is obtained as
\begin{align*}
\bS_j & = \bR_{j,j} \bW_{j,j} \bR_{j,j}^{\T}-  \bR_{j,j} \bW_{j,<j}\bA^{\T} (\bA \bW_{<j,<j} \bA^{\T})^{-1}  \bA \bW_{<j,j}\bR_{j,j}^{\T}\\
& = 
\bR_{j,j} \left( \bW_{j,j} - \bW_{j,< j}(\bW_{< j, < j})^{-1}\bW_{<j,j}  \right)\bR_{j,j}^\T,
\end{align*}
which completes the proof.
\end{proof}
\section{Application to kernel matrices}
\label{sec:kernel-matrices}

In this section, we apply \Cref{thm:ASE-generalised-kernel-form} to
kernel matrices, to settle a conjecture from
\cite{BarthelmeUsevich:KernelsFlatLimit}. The only difficulty is to show that
kernel matrices can indeed be written in the form required by theorem
\ref{thm:ASE-generalised-kernel-form}, and explicitate the matrices involved.

\subsection{Kernel matrices: background and notation}

We need to briefly recall some definitions and notation on kernel matrices. For
more information on these matrices, we refer the reader to
\cite{schaback2006kernel,wendland2004book,fasshauer2007meshfree}. We follow the
notation used in \cite{BarthelmeUsevich:KernelsFlatLimit}.

Kernel matrices are formed from a set of $n$ points in $\R^d$, noted $\X = \{
\bx_1, \bx_2,\dots, \bx_n\}$. We define
\begin{equation}
  \label{eq:kernel-matrix-gen}
  \bKe = \left[ k(\varepsilon\bx_i,\varepsilon\bx_j) \right]_{i,j=1}^{n,n}
\end{equation}
where $k(\bx,\by)$ is a positive-definite \emph{kernel function} and $\varepsilon$
is a (spatial) non-negative scaling parameter. We seek to characterise the ASE of kernel
matrices in the flat limit $\flatlim$.

The class of kernel functions is very wide, but the most commonly-used are
\emph{radial}, meaning that $k(\bx,\by)$ only depends on the (Euclidean)
distance $\norm{\bx-\by}$:
\begin{equation}
  \label{eq:def-radial}
  k(\bx,\by) = \psi(\norm{\bx-\by})
\end{equation}
Radial kernels are particularly easy to work with because the flat limit
expansion of $k(\varepsilon \bx,\varepsilon \by)$ can be obtained from an expansion of
$\psi(s)$ at 0, i.e.:
\begin{equation}
  \label{eq:radial-expansion}
  k(\varepsilon\bx,\varepsilon\by) = \psi(\varepsilon\norm{\bx-\by}) = \psi_0+\varepsilon \psi_1 \norm{\bx-\by}  +\ep2 \psi_2 \norm{\bx-\by}^2 + \ep3 \psi_3 \norm{\bx-\by}^3 + \dots
\end{equation}
In this section we assume that $\psi(s)$ is analytic at 0, so that we are
dealing with analytic perturbations.

\begin{remark}
While we consider radial kernels in this section, the result will also hold for other smooth kernels (as long as the kernel matrices analytic in the scaling parameter $\varepsilon$), similarly to \cite{BarthelmeUsevich:KernelsFlatLimit}.
\end{remark}

The following criterion, called the ``regularity index'', is key for
characterising the flat limit of kernel matrices:
\begin{definition}[Regularity index]
  Let $k(\bx,\by) = \psi(\norm{\bx - \by})$ a radial kernel, and $\psi(s)$ have the following expansion at $0$:
  \begin{equation}
    \label{eq:regularity-index}
    \psi(s) = \psi_0 + \psi_1 s + \psi_2 s^2 + \ldots
  \end{equation}
  We say $k$ has regularity index $r$ if $\psi_{2r-1} \neq 0$, and $\psi_{2c-1}=0$ for
  $c<r$, i.e. $\psi_{2r-1}$ is the first non-zero odd term in the expansion.
  Kernels with $r<\infty$ are said to be ``finitely smooth'', kernels with
  $r=\infty$ are said to be ``completely smooth''.
\end{definition}
The spectral behaviour of kernel matrices depends in the most part on the
regularity index, although we cannot provide a concise explanation of why this
is the case (see \cite{barthelme2022gaussian} for a discussion).

Two kernels with widely different flat limit behaviour are the Gaussian kernel
and the exponential kernel, with $r=\infty$ and $r=1$, respectively:
\begin{example}[Examples of kernels with different regularity coefficients]
  The Gaussian kernel (eq. \eqref{eq:kernel-example}) corresponds to
  $\psi(s)=\exp(-s^2)=1-s^2+\frac{1}{2}s^4-\frac{1}{6}s^6+\dots$,
  which contains only \emph{even} monomials in $s$ ($s^0,s^2,s^4,\dots$). This
  leads to a small-$\varepsilon$ expansion that contains only even powers of $\varepsilon$, so that
  the Gaussian kernel has therefore regularity $r=\infty$. 

  Contrast this to the so-called \emph{exponential} kernel, which has
  \begin{equation}
    \label{eq:exponential-kernel}
    \psi(s) = \exp(-s) = 1-s+\frac{1}{2}s^2-\frac{1}{6}s^3+\dots
  \end{equation}
  where the first odd power of $s$ is $s^1$. The regularity of the exponential
  kernel is therefore $r=1$.
  
  The following kernel is a special case of the Matérn family of kernels \cite{stein1999interpolation} with $r=2$:
  \begin{equation}
    \label{eq:matern-r2}
    \psi(s) = (1+s)\exp(-s) = (1+s)(1-s+\frac{1}{2}s^2-\frac{1}{6}s^3\dots) = 1-\frac{s^2}{2}  + \frac{s^3}{3} + \dots
  \end{equation}
  Compared to the exponential kernel, the term in $s$ drops out, but the term in
  $s^3$ remains, which increases $r$ from 1 to 2. 
\end{example}

\subsection{Vandermonde, Wronskian and distance matrices}
To express kernel matrices in the requisite form we need some standard notation for
multivariate polynomials (see \cite{BarthelmeUsevich:KernelsFlatLimit} for
details). 

Let $\vect{x} =
\begin{pmatrix}
  x_1 & x_2 & \ldots & x_d
\end{pmatrix}^\T
\in \R^d $. A monomial in $\vect{x}$ is a function of
the form:
\[ \vect{x}^{\vect{\alpha}} = \prod_{i=1}^d x_i^{\alpha_i}  \]
for $\vect{\alpha} \in \mathbb{N}^d$ (a multi-index). The degree of a monomial
is defined $|\vect{\alpha}|=\sum_{i=1}^d \alpha_i$. For instance:
$ \vect{x}^{(1,3,1)} = x_1^1x_2^3x_3^1$  has degree 5, so does $
\vect{x}^{(2,2,1)} = x_1^2x_2^2x_3^1$. The numbers of monomials of degree $\leq
s$  and degree $=s$ in dimension $d$ are given by 
\begin{equation}
  \label{eq:dim-polynomials}
  \PP_{s,d} =  {k+d \choose d},\quad
		  \HH_{s,d} =  {k+d-1 \choose d-1}
\end{equation}
respectively.
By 
evaluating monomials of degree $\leq s$ on a discrete set of nodes $\X = \{
\bx_1, \dots, \bx_n \}$, we form a matrix:
\begin{definition}[Vandermonde matrix]
  The Vandermonde matrix of degree $\leq s$ on nodes $\X = \{
  \bx_1, \dots, \bx_n \}$
  \begin{equation}
    \label{eq:vdm-matrix}
    \bV_{\leq s} = \left[ \bx_{i}^{\bal} \right]_{i \in \{1,\dots,n\},|\bal|\leq s }
  \end{equation}
\end{definition}
The nodes vary along the rows, the monomials (indexed by $\bal$) along the
columns. Which (degree-graded) monomial order is used is irrelevant for
our results. 
\begin{example}
In dimension one, the generalised Vandermonde matrix simplifies to
the classical Vandermonde matrix:
\begin{equation}
  \label{eq:vdm-matrix-1d}
  \bV_{\leq s} = \left[ x_{i}^{j} \right]_{i \in \{1,\dots,n\},j \in \{0,\dots,s\} }
\end{equation}
\end{example}

By eq. \eqref{eq:dim-polynomials}, the matrix $\bV_{\leq s}$ has dimension $n
\times  \PP_{s,d} = n \times \binom{s+d}{d}$ in dimension $d$. One needs to keep
in mind two important differences between the univariate and the multivariate
case:
\begin{enumerate}
\item In the univariate case, the matrix $\bV_{\leq s}$ always has full column
  rank, if $s \leq n-1$ and if the nodes in $\X$ are distinct. This is no longer
  the case in the multivariate case. For instance, if the nodes lie on a line, then
  $\bV_{\leq 1}$ has rank 2 instead of rank $1+d$. When the matrix $\bV_{ \leq
    s}$ is rank-deficient, we say the nodes are \emph{non-unisolvent} at degree $s$.
\item In the univariate case, by picking $s=n-1$ we obtain a square matrix
  $\bV_{\leq n-1}$. In the multivariate case this may or may not be possible
  depending on $n$ and $d$. For instance, in dimension $d=2$, the size of
  $\bV_{\leq s}$ is $1,3,6,10,\dots$ with $s=0,1,2,3,\dots$. If $n=5$ then
  $\bV_{\leq 1}$ is too narrow and $\bV_{\leq 2}$ too wide. 
\end{enumerate}
We also split the Vandermonde matrices in blocks, which correspond to fixed degrees of monomials:
\[
\bV_{\le s} = \begin{pmatrix}\bV_{0} & \bV_{1} & \cdots & \bV_{s} \end{pmatrix}, \quad \bV_{s} = \left[ \bx_{i}^{\bal} \right]_{i \in \{1,\dots,n\},|\bal|= s }
\]
The particular order of monomials of the same degree will not be important for what follows, but we assume it fixed. 
\begin{example}
In dimension $2$, the generalised Vandermonde matrix  has blocks $  \bV_{s}$ with $1,2,3,\ldots$ columns:
\begin{equation}
  \label{eq:vdm-matrix-2d}
 \matr{V}_{\le 2} =
\left[\begin{array}{c|cc|ccc}
1 & y_1 & z_1 & y_1^2 & y_1z_1 & z_1^2 \\
1 & y_2 & z_2 & y_2^2 & y_2z_2 & z_2^2 \\
\vdots & \vdots & \vdots  & \vdots  & \vdots  & \vdots  \\
1 & y_n & z_n & y_n^2 & y_nz_n & z_n^2 \\
\end{array}\right],
\end{equation}
where 
  \[
\X = \{\left[\begin{smallmatrix}y_1 \\z_1 \end{smallmatrix}\right], \left[\begin{smallmatrix}y_2 \\z_2 \end{smallmatrix}\right], \ldots, \left[\begin{smallmatrix}y_n \\z_n \end{smallmatrix}\right]\},
\]
with a particular chosen ordering of monomials.
\end{example}

Alongside Vandermonde matrices, we need to define Wronskian matrices, which are
matrices of partial derivatives of the kernel. The full Wronskian matrix is an
infinite matrix that contains partial derivatives at all orders. Here we only
need submatrices containing derivatives of finite order:
\begin{definition}\label{def:wronskian}
  Let $k$ a radial kernel with regularity index $r$, and let $d$ be fixed.
  Then for $i,j \ge 0$ such that $i+j \le 2r -2$ we define the Wronskian submatrix $\bW_{i,j} \in \R^{\HH_{i,d} \times \HH_{j,d}}$ as
  \[
\bW_{i,j} = \left[ \frac{\left.\frac{\partial^{q}}{\partial \bx^{\bal} \partial \by^{\bbet}}  k(\bx,\by) \right|_{\bx,\by = 0}}{\bal!\bbet!} \right]_{|\bal| = i, |\bbet| = j},   
  \]
where the columns and the rows and columns of $\bW_{i,j}$ are indexed by the
multi-indices whose order is consistent with ordering the degrees in the
Vandermonde matrices \eqref{eq:vdm-matrix} (i.e., rows and columns of
$\bW_{i,j}$ are in the same order as columns for $\bV_i$ and $\bV_j$
respectively). 
\end{definition}
See the appendix of \cite{barthelme2022gaussian} for convenient formulas for
Wronskians of common kernels.

We can also define stacked Wronskian matrices $\bW_{\le i, \le i}$, for $i < r$ as
\begin{equation}\label{eq:wronskian_stacked}
  \bW_{\le i, \le i}  =
  \begin{pmatrix}
    \bW_{(0,0)} & \bW_{(0,1)} & \cdots &  \bW_{(0,i)}\\
    \bW_{(1,0)} & \bW_{(1,1)} & \cdots & \bW_{(1,i)} \\
    \vdots           & \vdots           &          & \vdots \\
    \bW_{(i,0)}  & \bW_{(i,1)}  & \cdots & \bW_{(i,i)} 
  \end{pmatrix}
\end{equation}
This matrix is of size ${\PP_{i,d} \times \PP_{i,d}}$ and contain all partial derivatives up to order $2i$.
Moreover, it possesses the following nice property:
\begin{lemma}\label{lem:spd_wronskian}
  For a (strictly) positive definite kernel, all the Wronskian matrices are (strictly) positive definite.
\end{lemma}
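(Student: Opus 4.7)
The plan is to exhibit $\bc^{\T}\bW_{\le i,\le i}\bc$ as a quantity that inherits its sign directly from the positive-definiteness of $k$. A straightforward rearrangement using Definition~\ref{def:wronskian} gives
\[
\bc^{\T}\bW_{\le i,\le i}\bc \;=\; \left. P(\partial_{\bx})\,P(\partial_{\by})\,k(\bx,\by)\right|_{\bx=\by=0},
\qquad
P(\partial) \;=\; \sum_{|\bal|\le i}\frac{c_{\bal}}{\bal!}\,\partial^{\bal}.
\]

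For the PSD part, I would realise the differential operator $P(\partial)$ evaluated at $0$ as a limit of finite-difference combinations of point evaluations. Picking a unisolvent configuration $\bu_1,\ldots,\bu_N$ of $N=\PP_{i,d}$ distinct points in $\R^d$ and weights $w_j(t)$ that solve the moment equations $\sum_j w_j(t)(t\bu_j)^{\bbet} = c_{\bbet}$ for all $|\bbet|\le i$, one finds $w_j(t)=\O(t^{-i})$, so the Taylor remainder contributes $o(1)$, and therefore
\[
P(\partial)\, f(0) \;=\; \lim_{t\to 0^+}\sum_{j=1}^{N} w_j(t)\, f(t\,\bu_j)
\]
for every function $f$ of class $C^{i+1}$ near $0$. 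Applying this in each variable of $k$ gives
\[
\bc^{\T}\bW_{\le i,\le i}\bc \;=\; \lim_{t\to 0^+}\sum_{j,k=1}^{N} w_j(t)\,w_k(t)\,k(t\,\bu_j,\,t\,\bu_k),
\]
and each term on the right is the quadratic form of the weight vector $(w_j(t))_j$ applied to the PSD kernel matrix $[k(t\bu_j,t\bu_k)]_{j,k}$, hence is non-negative. Passing to the limit yields $\bc^{\T}\bW_{\le i,\le i}\bc\ge 0$.

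For the strictly positive definite case, the cleanest route is Bochner's theorem applied to the radial, translation-invariant kernel: there exists a finite non-negative Borel measure $\mu$ on $\R^d$ with support of positive Lebesgue measure such that $\psi(\|\bx-\by\|) = \int_{\R^d} e^{\im\,\vect{\xi}^{\T}(\bx-\by)}\,d\mu(\vect{\xi})$, which is the standard characterisation of strict PD of a radial kernel. Differentiating under the integral -- legitimate because the regularity index $r>i$ ensures $\mu$ has moments up to order $2i$ -- and evaluating at the origin gives
\[
\bc^{\T}\bW_{\le i,\le i}\bc \;=\; \int_{\R^d}\bigl|P(\im\,\vect{\xi})\bigr|^{2}\,d\mu(\vect{\xi}).
\]
For $\bc\ne 0$ the polynomial $P(\im\,\vect{\xi})$ is not identically zero, so its zero set has zero Lebesgue measure, and the integral is strictly positive.

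I expect the main obstacle to be the bookkeeping in the strictly PD step: one must verify that Bochner's integral representation is indeed $2i$ times differentiable near $0$, which rests on the regularity index $r$ being strictly greater than $i$ (as implied by the constraints in Definition~\ref{def:wronskian}). If one wished to avoid Bochner altogether, an alternative is to work inside the RKHS $\calH_k$: after the natural diagonal rescaling, $\bW_{\le i,\le i}$ becomes the Gram matrix of the derivative reproducers $\partial_{\by}^{\bal}k(\cdot,0)$, and strict PD then reduces to linear independence of these functions in $\calH_k$, which itself follows from strict PD of $k$ by a standard density argument.
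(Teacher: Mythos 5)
The paper's own ``proof'' of this lemma is a bare citation to \cite{barthelme2022gaussian}; there is no argument in the present text to compare against, so supplying a self-contained proof is a reasonable ambition. Your identity $\bc^{\T}\bW_{\le i,\le i}\bc=P(\partial_{\bx})P(\partial_{\by})k(\bx,\by)|_{\bx=\by=0}$ is correct, and the PSD step via scaled finite differences is sound: the weight estimate $w_j(t)=\O(t^{-i})$ and the $o(1)$ control of the Taylor remainder both hold under the $C^{2i+1}$ smoothness near the origin that the constraint $i\le r-1$ in Definition~\ref{def:wronskian} guarantees, so $\bc^{\T}\bW_{\le i,\le i}\bc\ge 0$ follows.

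The strict-PD step, however, has a real gap, and it is not the one you flag. You assert as ``the standard characterisation'' that a radial kernel on a fixed $\R^d$ is strictly PD if and only if its Bochner measure $\mu$ has support of positive Lebesgue measure. That is not a theorem: positive Lebesgue measure of $\mathrm{supp}\,\mu$ is a \emph{sufficient} condition for strict PD, not a necessary one. What the final integral $\bc^{\T}\bW_{\le i,\le i}\bc=\int\bigl|P(\im\vect{\xi})\bigr|^{2}\,d\mu(\vect{\xi})$ actually requires is that $\mathrm{supp}\,\mu$ not be contained in the zero set of any nonzero polynomial of degree $\le i$, whereas strict PD of $k$ is equivalent to $\mathrm{supp}\,\mu$ not being contained in the common zero set of any non-trivial exponential sum $\sum_j c_j e^{\im\bx_j^{\T}\vect{\xi}}$. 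These conditions are genuinely different: a singular measure concentrated on, say, a sphere can give a strictly PD translation-invariant kernel, yet it is annihilated by the degree-two polynomial $1-\|\vect{\xi}\|^2$, which would make $\bW_{\le 2,\le 2}$ singular. So ``strict PD $\Rightarrow$ Wronskian strictly PD'' does not follow from Bochner alone for arbitrary strictly PD radial kernels; you need an additional hypothesis on $\mu$ (absolute continuity, or at least support not lying in an algebraic hypersurface of bounded degree), which does hold for the Gaussian, Mat\'ern, and more generally Schoenberg/completely-monotone classes the paper cares about, but should be stated. Your RKHS alternative hides the same issue: linear independence of the derivative representers $\partial_{\by}^{\bal}k(\cdot,0)$ is exactly the statement you are trying to prove, and it does not follow from linear independence of point-evaluation representers ``by a standard density argument'' --- limits of linearly independent families need not remain linearly independent, so that route also needs a genuine argument.
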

\begin{proof}
  See \cite{barthelme2022gaussian}.
\end{proof}

Finally, the following matrices are required in the expansion of finitely-smooth
kernels:
\begin{equation}
  \label{eq:distance-matrices}
  \bD^{(q)} = \left[ \norm{\bx_i - \bx_j}^q \right]_{i,j=1}^n.
\end{equation}
We conclude this subsection by a result on conditional positive definiteness of
$\bD^{(q)}$ for odd order $d$.
\begin{lemma}\label{lem:distance_cpd}
If $q = 2r-1$ for integer $r \geq 1$,  and $\bV_{\le r-1}$ is full column rank, then the matrix
\[
(-1)^r \bA^\T \bD^{(q)} \bA
\]
is strictly positive definite for any full column rank matrix $\bA$
with $\bA^\T \bV_{\le r-1} = 0$.
\end{lemma}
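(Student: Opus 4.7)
The plan is to deduce the statement from the classical theory of conditionally positive definite (CPD) radial kernels, going back to work of Schoenberg and Micchelli. The central fact is that the kernel $k(\bx,\by) = \|\bx-\by\|^{2r-1}$ is strictly CPD of order $r$ (with sign $(-1)^r$): for any $(r-1)$-unisolvent set of distinct nodes $\{\bx_1,\dots,\bx_n\}$, every nonzero $\bm{a}\in \RR^n$ with $\bm{a}^\top \bV_{\le r-1}=\matr{0}$ satisfies $(-1)^r \bm{a}^\top \bD^{(2r-1)} \bm{a} > 0$.

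First I would reduce the matrix statement to this vector statement. If $\bA$ is full column rank and $\bA^\top \bV_{\le r-1}=\matr{0}$, then for every nonzero $\bm{c}$ the vector $\bm{a}=\bA\bm{c}$ is nonzero (by full column rank of $\bA$) and still satisfies $\bm{a}^\top \bV_{\le r-1}=\matr{0}$. Applying the strict CPD property then yields $(-1)^r \bm{c}^\top (\bA^\top \bD^{(2r-1)} \bA) \bm{c} > 0$ for all such $\bm{c}$, which is exactly positive-definiteness of $(-1)^r \bA^\top \bD^{(2r-1)} \bA$. The hypothesis that $\bV_{\le r-1}$ be full column rank is precisely the $(r-1)$-unisolvence needed to invoke the CPD theorem.

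The proof of the strict CPD property itself would proceed via generalised Fourier analysis on tempered distributions. On $\RR^d$, the generalised Fourier transform of $\|\bx\|^{2r-1}$ is proportional to $(-1)^r c_{r,d}\, \|\bm{\omega}\|^{-(d+2r-1)}$ with a positive constant $c_{r,d}$, modulo polynomials of degree $\le 2r-1$. For $\bm{a}$ as above, the signed discrete measure $\mu=\sum_i a_i \delta_{\bx_i}$ has Fourier transform $\hat\mu$ vanishing to order at least $r$ at the origin (which is exactly the content of $\bm{a}^\top \bV_{\le r-1}=\matr{0}$), so a Parseval-type manipulation gives
\[
(-1)^r \bm{a}^\top \bD^{(2r-1)} \bm{a} \;=\; c_{r,d}\!\int_{\RR^d} \frac{|\hat\mu(\bm{\omega})|^2}{\|\bm{\omega}\|^{d+2r-1}}\, d\bm{\omega}.
\]
The integral converges because the $r$-fold vanishing of $\hat\mu$ at the origin exactly balances the singularity of the weight there, while $\hat\mu$ is bounded at infinity; it is strictly positive because $\mu\neq 0$ forces $\hat\mu\not\equiv 0$ by Fourier injectivity on finite measures.

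The main obstacle is making this Fourier identity rigorous: $\|\bx\|^{2r-1}$ is neither in $L^1$ nor $L^2$, so its transform only lives in the quotient of tempered distributions by polynomials of degree $\le 2r-1$, and one must verify that this polynomial ambiguity is annihilated on both sides by the orthogonality constraint $\bA^\top \bV_{\le r-1} = \matr{0}$. This is why such CPD results are typically cited rather than re-derived from scratch; a complete self-contained derivation can be modelled on Wendland's \emph{Scattered Data Approximation}, Chapter~8, or on Micchelli's 1986 paper.
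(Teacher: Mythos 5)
Your argument is correct and is essentially the one the paper invokes: the paper's proof of this lemma is simply a citation to Fasshauer's \emph{Meshfree Approximation Methods}, Chapter~8, which establishes strict conditional positive definiteness of $\|\cdot\|^{2r-1}$ (of order $r$, with sign $(-1)^r$) via the generalized-Fourier-transform argument you outline, and the reduction from the matrix statement to the vector statement via $\bm{a}=\bA\bm{c}$ is the obvious one. Your proposal just unpacks what the cited reference contains rather than treating it as a black box.
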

\begin{proof}
  See \cite{fasshauer2007meshfree}, ch. 8.
\end{proof}

We can now show that kernel matrices have the form required to apply theorem
\ref{thm:ASE-generalised-kernel-form}. We separate the completely smooth and
finitely smooth cases.

\subsection{Results in for smooth radial kernels}
We first analyse the smooth case, where the kernel function is differentiable sufficiently many times.
With some abuse of notation, denote by
\begin{equation}\label{eq:Delta_k}
\matr{\Delta}_k = \matr{\Delta}_k(\varepsilon)   = 
\begin{bmatrix}
1 & & & \\
& \varepsilon I_{ \HH_1} & & \\
&  & \ddots & \\
&  &  & \varepsilon^{k}I_{ \HH_k} 
\end{bmatrix},
\end{equation}
which is a particular case of the matrix \eqref{eq:Delta_a} (corresponding to the choice of  with $a_j = \HH_{j,d}$ and $\nu_j = j$, i.e  the number of repetitions of $\varepsilon^j$ is according to the number of homogeneous polynomials of degree $d$).

\begin{lemma}\label{prop:asymptotic-kernel-smooth}
Let $k(\bx,\by)$ be a positive definite kernel function,
such that $k(\bx,\by)$  is a radial kernel with regularity $r$, and 
 $\X$ be a node set such that $\rank \bV_{\le p-1} = n$ for $p \le r$.

Then the kernel matrix has the following  asymptotic form in $\flatlim$:
  \begin{equation}
    \label{eq:kernel-asymptotic-form-smooth}
    \bKe = \bV_{\le p-1} \matr{\Delta}_{p-1}(\varepsilon)  (\bW_{\le p-1} + o(1)) \matr{\Delta}_{p-1}(\varepsilon)  \bV^\T_{\le p-1}.
  \end{equation}
where $\bV = \bV_{\le p}$ is the Vandermonde matrix described in \eqref{eq:vdm-matrix}, $\bW = \bW_{\le p,\le p}$ is the Wronskian matrix from  \eqref{eq:wronskian_stacked}, and   $\bDe = \matr{\Delta}_k$ is the diagonal matrix  defined in \eqref{eq:Delta_k}.
\end{lemma}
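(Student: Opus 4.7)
The approach is to Taylor-expand $k(\bu,\bv)$ at $(0,0)$, factor the Vandermonde block $\bV_{\le p-1}$ out on both sides using the unisolvency assumption, and verify that the resulting middle factor equals $\matr{\Delta}_{p-1}(\bW_{\le p-1}+o(1))\matr{\Delta}_{p-1}$.

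Since $\psi$ has vanishing odd Taylor coefficients $\psi_1,\psi_3,\ldots,\psi_{2r-3}$, the radial kernel $k(\bu,\bv)=\psi(\|\bu-\bv\|)$ is $C^{2(p-1)}$ at $(0,0)$, and Taylor's theorem yields
\[
k(\bu,\bv) = \sum_{|\bal|+|\bbet|\le 2(p-1)} W_{\bal,\bbet}\,\bu^{\bal}\bv^{\bbet} + R(\bu,\bv),
\]
with $W_{\bal,\bbet}$ the Wronskian entries of \cref{def:wronskian} and $R(\bu,\bv)=o(\|(\bu,\bv)\|^{2(p-1)})$. Setting $\bu=\varepsilon\bx_i$, $\bv=\varepsilon\bx_j$ and $\bV_{\bal}=[\bx_i^{\bal}]_{i=1}^n$, this translates to
\[
\bKe = \sum_{|\bal|+|\bbet|\le 2(p-1)} \varepsilon^{|\bal|+|\bbet|}\,W_{\bal,\bbet}\,\bV_{\bal}\bV_{\bbet}^{\T} + \vect{\rho}(\varepsilon),
\]
where $\vect{\rho}(\varepsilon)$ is entrywise $o(\varepsilon^{2(p-1)})$ uniformly across the (finite) node set.

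The unisolvency assumption $\rank\bV_{\le p-1}=n$ says that the columns of $\bV_{\le p-1}$ span $\RR^n$, so every $\bV_{\bal}$ factors as $\bV_{\le p-1}\vect{t}_{\bal}$ for some coordinate vector $\vect{t}_{\bal}\in\RR^{\PP_{p-1,d}}$; I would choose $\vect{t}_{\bal}$ to be the standard basis vector $\vect{e}_{\bal}$ whenever $|\bal|\le p-1$ and any representative otherwise. Using a right inverse $\bV^{+}=\bV_{\le p-1}^{\T}(\bV_{\le p-1}\bV_{\le p-1}^{\T})^{-1}$ I would also write $\vect{\rho}(\varepsilon)=\bV_{\le p-1}\bM(\varepsilon)\bV_{\le p-1}^{\T}$ with $\bM(\varepsilon)=\bV^{+}\vect{\rho}(\varepsilon)(\bV^{+})^{\T}$. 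Substituting yields $\bKe=\bV_{\le p-1}\,\bG(\varepsilon)\,\bV_{\le p-1}^{\T}$ for an explicit $\bG(\varepsilon)$, reducing the claim to verifying $\matr{\Delta}_{p-1}^{-1}\bG(\varepsilon)\matr{\Delta}_{p-1}^{-1}=\bW_{\le p-1}+o(1)$ as $\varepsilon\to 0$.

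The last step is the crux of the argument. The $(\bm{\gamma},\bm{\delta})$-entry (with $|\bm{\gamma}|,|\bm{\delta}|\le p-1$) aggregates terms of the form $\varepsilon^{|\bal|+|\bbet|-|\bm{\gamma}|-|\bm{\delta}|}W_{\bal,\bbet}(\vect{t}_{\bal})_{\bm{\gamma}}(\vect{t}_{\bbet})_{\bm{\delta}}$ plus the rescaled remainder. When $|\bal|,|\bbet|\le p-1$, the choice $\vect{t}_{\bal}=\vect{e}_{\bal}$ forces $\bal=\bm{\gamma}$ and $\bbet=\bm{\delta}$ in any non-zero contribution, producing exactly the required $W_{\bm{\gamma},\bm{\delta}}$ term. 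Every pair with $\max(|\bal|,|\bbet|)\ge p$ satisfies $\max(|\bal|,|\bbet|)\ge p>\max(|\bm{\gamma}|,|\bm{\delta}|)$, so the exponent $|\bal|+|\bbet|-|\bm{\gamma}|-|\bm{\delta}|$ is at least $1$ and the contribution is $\bO(\varepsilon)$. Finally, the rescaled remainder is bounded by $\|\bV^{+}\|^{2}\varepsilon^{-2(p-1)}\cdot o(\varepsilon^{2(p-1)})=o(1)$. The main obstacle lies in this bookkeeping: choosing $\vect{t}_{\bal}=\vect{e}_{\bal}$ on the low-degree indices is what prevents spurious linear combinations from polluting the $\bW_{\le p-1}$ diagonal, while the strict inequality $|\bm{\gamma}|<p\le|\bal|$ guarantees that every excess term vanishes under the two-sided $\matr{\Delta}_{p-1}^{-1}$ scaling.
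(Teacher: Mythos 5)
Your proof is correct, and it takes a genuinely different route from the paper's. The paper delegates the entire Taylor-expansion step to \cite[Theorems 4.5 and 6.3]{BarthelmeUsevich:KernelsFlatLimit}, importing the decomposition \eqref{eq:kernel_radial_expansion} of $\bKe$ into the main term $\bV_{\le p-1}\matr{\Delta}_{p-1}\bW_{\le p-1}\matr{\Delta}_{p-1}\bV_{\le p-1}^\top$, ``cross'' terms of the form $\varepsilon^{p}\bV_{\le p-1}\matr{\Delta}_{p-1}\bW_1(\varepsilon)$ plus its transpose, and a $\varepsilon^{2p-1}$-order remainder; the new work in the paper's proof is purely the algebraic absorption of those extra terms into the $o(1)$ using $\bI_n=\bV_{\le p-1}\matr{\Delta}_{p-1}\matr{\Delta}_{p-1}^{-1}\bV_{\le p-1}^{\dagger}$. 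You instead re-derive everything from a Peano--Taylor expansion of $k$ at $(0,0)$, factor every $\bV_{\bal}$ through $\bV_{\le p-1}$ via coordinate vectors $\vect{t}_{\bal}$ (and the remainder via a right inverse), and verify entrywise that the rescaled middle block converges to $\bW_{\le p-1}$. The central algebraic move --- exploiting $\rank\bV_{\le p-1}=n$ to pull $\bV_{\le p-1}$ out on both sides --- is the same in both proofs; yours trades brevity for self-containedness, which is a genuine gain if one does not want to unpack the cited reference. One sentence in your final paragraph is imprecise as stated: the implication ``$\max(|\bal|,|\bbet|)\ge p>\max(|\bm\gamma|,|\bm\delta|)$, so $|\bal|+|\bbet|-|\bm\gamma|-|\bm\delta|\ge 1$'' is false on its own (take $|\bal|=p$, $|\bbet|=0$, $|\bm\gamma|=|\bm\delta|=p-1$, giving exponent $2-p$). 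What actually rescues the estimate is the observation you make elsewhere in the same paragraph: whenever $|\bbet|\le p-1$ the choice $\vect{t}_{\bbet}=\vect{e}_{\bbet}$ forces $\bm\delta=\bbet$, collapsing the exponent to $|\bal|-|\bm\gamma|\ge p-(p-1)=1$; and when both $|\bal|,|\bbet|\ge p$ the sum is $\ge 2p>2(p-1)\ge|\bm\gamma|+|\bm\delta|$. The reasoning is sound, but the sentence should make this two-case split explicit rather than appeal to the max--max inequality alone.
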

\begin{proof}
The proof is given in \cref{sec:appendix}.
\end{proof}

Note that the same expansion holds for general smooth kernels (not necessarily radial).
\begin{lemma}\label{prop:asymptotic-kernel-smooth-general}
$k(\bx,\by)$ be a positive definite kernel function with $k \in \mathcal{C}^{p,p}$ and 
 $\X$ be a node set such that $\rank \bV_{\le p-1} = n$.
Then $\bKe$ has the same expansion as in \eqref{eq:kernel-asymptotic-form-smooth}
\end{lemma}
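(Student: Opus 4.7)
The plan is to mimic the proof of Lemma~\ref{prop:asymptotic-kernel-smooth}: the radial and general $\mathcal{C}^{p,p}$ cases differ only in how the Taylor expansion of $k(\varepsilon\bx,\varepsilon\by)$ is produced, and once that expansion is in hand the matrix identity assembles in the same way. In the radial case the expansion came from $\psi(\varepsilon\|\bx-\by\|)$; here the substitute is a bilinear Taylor expansion of $k$ about $(0,0)$, which is available because $k\in\mathcal{C}^{p,p}$.

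First, I would iterate Taylor's theorem (first in $\bx$ to order $p-1$, then in $\by$ to order $p-1$ inside each coefficient $\partial_{\bx}^{\bal} k(0,\cdot)$) to write
\[
k(\bx,\by)=\sum_{|\bal|,|\bbet|\le p-1}\frac{\partial_{\bx}^{\bal}\partial_{\by}^{\bbet}k(0,0)}{\bal!\,\bbet!}\bx^{\bal}\by^{\bbet}+R_1(\bx,\by)+R_2(\bx,\by),
\]
where, using the integral form of the remainders together with $k\in\mathcal{C}^{p,p}$, we have $R_1(\bx,\by)=\sum_{|\bal|=p}\bx^{\bal}f_{\bal}(\bx,\by)$ and $R_2(\bx,\by)=\sum_{|\bal|\le p-1,\,|\bbet|=p}\bx^{\bal}\by^{\bbet}g_{\bal,\bbet}(\by)$ with $f_{\bal},g_{\bal,\bbet}$ continuous. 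Substituting $\varepsilon\bx_i,\varepsilon\bx_j$ and recognising the coefficients as Wronskian entries (Definition~\ref{def:wronskian}), the polynomial part assembles into exactly $\bV_{\le p-1}\matr{\Delta}_{p-1}\bW_{\le p-1,\le p-1}\matr{\Delta}_{p-1}\bV_{\le p-1}^{\T}$, matching the leading matrix in \eqref{eq:kernel-asymptotic-form-smooth}.

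It remains to show that $R_1$ and $R_2$ contribute only $o(1)$ to the middle factor. The unisolvency hypothesis $\rank\bV_{\le p-1}=n$ yields a right inverse $\bV^{+}$ with $\bV_{\le p-1}\bV^{+}=\bI_{n}$, so any residual matrix $R$ rewrites as $\bV_{\le p-1}(\bV^{+}R\bV^{+\T})\bV_{\le p-1}^{\T}$, and the task becomes to show that $E(\varepsilon):=\matr{\Delta}_{p-1}^{-1}\bV^{+}R\bV^{+\T}\matr{\Delta}_{p-1}^{-1}=o(1)$ entrywise. For $R_2$, continuity gives $g_{\bal,\bbet}(\varepsilon\bx_j)=g_{\bal,\bbet}(0)+o(1)$; summing against $\bV^{+\T}$ produces $E_{\bal,\bm{\gamma}}=O(\varepsilon^{|\bal|+p})$, so the $\matr{\Delta}_{p-1}^{-1}$ rescaling leaves $O(\varepsilon^{p-|\bm{\gamma}|})=o(1)$ since $|\bm{\gamma}|\le p-1$. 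For $R_1$ one further iteration of Taylor in $\by$ (legitimate because $\partial_{\bx}^{\bal}k$ is $\mathcal{C}^{p}$ in $\by$ when $|\bal|=p$) splits it into a term symmetric to $R_2$ and a residual of order $\varepsilon^{2p}$, which is trivially $o(\varepsilon^{2(p-1)})$.

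The main obstacle I expect is the accounting in this last step: the $\varepsilon^{p}$ factor coming from the Taylor remainder must compensate the worst-case $\varepsilon^{-2(p-1)}$ blow-up of $\matr{\Delta}_{p-1}^{-2}$, and this works only because unisolvency lets the high-degree monomial vectors $[\bx_i^{\bal}]_i$ with $|\bal|\ge p$ be absorbed inside the column span of $\bV_{\le p-1}$ rather than entering through the scaling, where they would otherwise incur an extra factor $\varepsilon^{-|\bal|}$. Modulo this book-keeping, the argument is structurally identical to the proof of Lemma~\ref{prop:asymptotic-kernel-smooth}.
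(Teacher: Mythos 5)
Your overall strategy (Taylor-expand $k$ bilinearly, identify the Wronskian block, absorb the remainder into the $o(1)$ middle factor using the full-rank hypothesis) matches the paper's in spirit: the paper starts from a previously proved expansion $\bKe=\bV\bDe\bW\bDe\bV^\T+\varepsilon^p\bV\bDe\bW_1(\varepsilon)+\varepsilon^p\bW_2(\varepsilon)\bDe\bV^\T+\varepsilon^{2p}\bW_3(\varepsilon)$ and then performs exactly the kind of absorption you attempt. However, the specific middle factor you propose, $E=\matr{\Delta}_{p-1}^{-1}\bV^{+}R\bV^{+\T}\matr{\Delta}_{p-1}^{-1}$, is \emph{not} $o(1)$ for the cross-term remainders $R_2$ (and the part of $R_1$ symmetric to it) whenever $\bV_{\le p-1}$ is strictly wide, i.e.\ $\PP_{p-1,d}>n$, which is the generic multivariate situation. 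Writing $R_2=\bV_{\le p-1}\matr{\Delta}_{p-1}\bG(\varepsilon)$ with $\bG=\O(\varepsilon^p)$ (which is what your Taylor splitting gives), one has
\[
E=\bigl(\matr{\Delta}_{p-1}^{-1}\bV^{+}\bV_{\le p-1}\matr{\Delta}_{p-1}\bigr)\bigl(\bG(\varepsilon)\bV^{+\T}\matr{\Delta}_{p-1}^{-1}\bigr).
\]
The second factor is indeed $\O(\varepsilon)$, but the first is the degree-rescaled projector whose $(\bal,\bal')$-entry is $\varepsilon^{|\bal'|-|\bal|}(\bV^{+}\bV_{\le p-1})_{\bal,\bal'}$, and this blows up like $\varepsilon^{-(p-1)}$ unless $\bV^{+}\bV_{\le p-1}$ happens to be block lower-triangular in the degree grading. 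When $\bV_{\le p-1}$ is wide, $\bV^{+}\bV_{\le p-1}$ is a rank-$n$ projector in dimension $\PP_{p-1,d}>n$ that generically mixes degree blocks, so your claimed entrywise rate $(\bV^{+}R_2\bV^{+\T})_{\bal,\bm{\gamma}}=\O(\varepsilon^{|\bal|+p})$ is in fact only $\O(\varepsilon^{p})$, and $E_{\bal,\bm{\gamma}}=\O(\varepsilon^{p-|\bal|-|\bm{\gamma}|})$ can be as bad as $\O(\varepsilon^{2-p})$; not $o(1)$ once $p\ge 2$ and divergent for $p\ge 3$.

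The fix, and what the paper actually does, is a \emph{one-sided} identity insertion for the cross terms: since $R_2$ already factors through $\bV\matr{\Delta}_{p-1}$ on the left, insert the right-inverse identity on the right only, giving $R_2=\bV\matr{\Delta}_{p-1}\bigl(\bG(\varepsilon)\bV^{+\T}\matr{\Delta}_{p-1}^{-1}\bigr)\matr{\Delta}_{p-1}\bV^\T$ with middle factor $\O(\varepsilon^p\cdot\varepsilon^{-(p-1)})=\O(\varepsilon)$, and symmetrically for the $R_1$-piece of degree $\ge p$ in $\bx$ and $\le p-1$ in $\by$. The two-sided sandwich is only safe for the purely high-degree residual $\O(\varepsilon^{2p})$, where the double $\matr{\Delta}_{p-1}^{-1}$ is compensated by the doubled power $\varepsilon^{2p}$. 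Your closing caveat about the ``accounting'' was pointing exactly here; as written, the book-keeping does not close.
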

\begin{proof}
The proof is given in \cref{sec:appendix}.
\end{proof}

\Cref{prop:asymptotic-kernel-smooth} and \Cref{prop:asymptotic-kernel-smooth-general} help us to establish find the ASE for the completely smooth case.

\begin{theorem}\label{thm:ase-smooth}
Let $\bKe$ satisfy the conditions of \Cref{prop:asymptotic-kernel-smooth} or \Cref{prop:asymptotic-kernel-smooth-general}, and, in addition $p$ is the smallest such number (i.e., $\rank \bV_{\le p-1} < n$ but $\rank \bV_{\le p} = n$).
Let $\bQ_0,\ldots,\bQ_p$ and $\bR_{0,0}, \ldots, \bR_{p,p}$ come from the block QR factorization of the matrix
\[
\bV = \bV_{\le p} = \begin{pmatrix}\bV_{0} & \bV_{1} & \cdots & \bV_{p} \end{pmatrix}.
\]
Then  the ASE of $\bKe$ is given by
\begin{equation}
\label{eq:kbar-completely smooth}
\Kbe = \sum_{i=0}^p \bQ_i \bS_i \bQ_i^\T \varepsilon^{2 p},
\end{equation}
where $\bS_i$ are the Schur complements of the block matrix $\bH$ defined in \eqref{eq:H-triangular-asymptotics}.
\end{theorem}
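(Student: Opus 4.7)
The plan is to derive the ASE of $\bKe$ as a direct consequence of \Cref{thm:ASE-generalised-kernel-form}, once the kernel matrix is put into generalised kernel form. The strategy is short: invoke the asymptotic expansion of $\bKe$, verify the hypotheses of the general theorem, and read off the ASE.

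First, I would invoke \Cref{prop:asymptotic-kernel-smooth} (in the radial case) or \Cref{prop:asymptotic-kernel-smooth-general} (in the general smooth case) to write
\[
\bKe = \bV \matr{\Delta}_p(\varepsilon) \bigl(\bW_{\le p, \le p} + o(1)\bigr)\matr{\Delta}_p(\varepsilon)\, \bV^\top,
\]
with $\bV = \bV_{\le p}$. This is precisely the generalised kernel form \eqref{eq:generalised-kernel}: the scaling matrix $\matr{\Delta}_p(\varepsilon)$ from \eqref{eq:Delta_k} matches the structure of \eqref{eq:Delta_a} with valuations $\nu_i = i$ and repetitions $a_i = \HH_{i,d}$, while $\bV$ is partitioned column-wise by homogeneous degree, as in \eqref{eq:V}.

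Second, I would verify the hypotheses of \Cref{thm:ASE-generalised-kernel-form}. Invertibility of $\bW_{\le p, \le p}$ follows from \Cref{lem:spd_wronskian}, which guarantees that the stacked Wronskian of a strictly positive definite kernel is strictly positive definite. The full row rank condition of \Cref{rem:assumptions_V}, namely $\rank \bV = n$, is secured by the choice of $p$ as the smallest integer with $\rank \bV_{\le p} = n$. Minimality further forces each block to contribute $b_i = \rank \bV_{\le i} - \rank \bV_{<i} > 0$ new dimensions, so that the block QR factorisation \eqref{eq:block-QR} is well-defined and each $\bR_{i,i}$ is full row rank.

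Finally, the conclusion of \Cref{thm:ASE-generalised-kernel-form}(1), applied with $\nu_i = i$, gives
\[
\Kbe = \sum_{i=0}^p \varepsilon^{2 i}\, \bQ_i \bS_i \bQ_i^\top,
\]
with $\bS_i$ the Schur complements of the matrix $\bH$ defined in \eqref{eq:H-triangular-asymptotics}. The main subtlety is not the application itself but the bookkeeping between the monomial indexing of multivariate polynomials (there are $\HH_{i,d}$ monomials of degree exactly $i$) and the effective block sizes $b_i$ produced by the block QR step: in the multivariate non-unisolvent case the two may differ, and one must check carefully that the partitions of $\bR$ and of $\bW$ used in \eqref{eq:H-triangular-asymptotics} remain compatible. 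Once this alignment is verified, the proof reduces to a one-line invocation of the general theorem.
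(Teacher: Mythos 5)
Your proof is correct and follows essentially the same route as the paper's: invoke \Cref{prop:asymptotic-kernel-smooth} (or \Cref{prop:asymptotic-kernel-smooth-general}) to obtain the generalised kernel form, establish invertibility of the stacked Wronskian via \Cref{lem:spd_wronskian}, confirm $\rank \bV = n$ from the minimality of $p$, and then apply \Cref{thm:ASE-generalised-kernel-form}. The only added material — the observation that minimality of $p$ guarantees $b_i > 0$, and the remark on reconciling the monomial block sizes $\HH_{i,d}$ with the QR block sizes $b_i$ in the non-unisolvent case — is a sensible verification of the hypotheses of \Cref{rem:assumptions_V} and does not change the argument.
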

\begin{proof}
By \Cref{prop:asymptotic-kernel-smooth}, $\bKe$ has factorization \eqref{eq:kernel-asymptotic-form-smooth}, where the matrix $\bW$ is strictly positive definite by \Cref{lem:spd_wronskian}. 
Therefore, we can apply \Cref{thm:ASE-generalised-kernel-form}, and the statements  follow from \Cref{thm:ASE-generalised-kernel-form}.
\end{proof}

In the case when the Vandermonde matrices are full rank, then we can use the
simplified expression for the Schur complements. These expressions lead to the
following corollary, settling Conjecture 1 in \cite{BarthelmeUsevich:KernelsFlatLimit} in the smooth case.

\begin{corollary}\label{cor:ase-smooth-unisolvent}
Let $j \le p$  be such that the matrix $\bV_{\le j}$ is full-column rank.
Then 
\begin{enumerate}
\item  the number of eigenvalues in the $j$-th block (with the order $\varepsilon^{2j}$)  is exactly equal to  $\HH_{j,d}$ (the number of monomials of degree $j$);
\item the Schur complement $\bS_j$ admits the simplified expression \eqref{eq:schur-compl-H}, and therefore, the leading coefficients of the eigenvalues are given by the eigenvalues of $\bS_j$ and the ``leading eigenvectors'' (in the sense of \Cref{def:ASE}) are given by the eigenvectors of 
\begin{equation}\label{eq:ase-term-unisolvent}
\bQ_{j} \bS_{j} \bQ_{j}^\T = 
\begin{cases}
\bV_{0} \bW_{0,0} \bV_{0}^\T, & j =0, \\
\bQ_{j}\bQ_{j}^\T\bV_{j} \left( \bW_{j,j} - \bW_{j,<j}(\bW_{< j, <j})^{-1}\bW_{< j,j}  \right)\bV_{j}^\T \bQ_{j}\bQ_{j}^\T, & j > 0.\\
\end{cases}
\end{equation}
\end{enumerate}
\end{corollary}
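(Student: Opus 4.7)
The plan is to combine the general result of Theorem~\ref{thm:ASE-generalised-kernel-form} with the block structure of the QR factorization \eqref{eq:block-QR} of the Vandermonde matrix $\bV=\bV_{\le p}$. By \cref{thm:ase-smooth}, we already have $\Kbe = \sum_{i=0}^p \varepsilon^{2i} \bQ_i \bS_i \bQ_i^\top$, so the task reduces to (i) counting the columns $b_j$ of $\bQ_j$ under the unisolvency hypothesis, and (ii) rewriting $\bQ_j \bR_{j,j}$ in terms of $\bV_j$ and the projector $\bQ_j\bQ_j^\top$.

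For part (1), I would observe that since $\bV_{\le j}$ has full column rank, so does its submatrix $\bV_{\le j-1}$. By the definition of $b_i$ in \cref{rem:assumptions_V}, we get
\[
b_j = \rank \bV_{\le j}-\rank \bV_{<j} = \PP_{j,d}-\PP_{j-1,d} = \HH_{j,d},
\]
where the last equality uses the standard identity for the number of monomials of exact degree $j$. This identifies the size of the $j$-th block of eigenvalues with $\HH_{j,d}$.

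For part (2), the hypothesis that $\bV_{\le j-1}$ is full column rank (implied by unisolvency at degree $j$) triggers exactly the condition of statement (2) of \cref{thm:ASE-generalised-kernel-form}, which yields the Schur complement expression \eqref{eq:schur-compl-H}. To obtain the explicit eigenvector form \eqref{eq:ase-term-unisolvent}, I would exploit the block upper triangular structure of $\bR$: expanding column block $j$ of $\bV=\bQ\bR$ gives
\[
\bV_j = \sum_{i=0}^{j} \bQ_i \bR_{i,j},
\]
and then left-multiplying by the orthogonal projector $\bQ_j\bQ_j^\top$ and using $\bQ_j^\top \bQ_i = 0$ for $i<j$ together with $\bQ_j^\top \bQ_j = \bI_{b_j}$ yields the key identity
\[
\bQ_j \bR_{j,j} = \bQ_j \bQ_j^\top \bV_j.
\]
Substituting this into $\bQ_j \bS_j \bQ_j^\top = (\bQ_j \bR_{j,j})\bigl(\bW_{j,j}-\bW_{j,<j}\bW_{<j,<j}^{-1}\bW_{<j,j}\bigr)(\bQ_j\bR_{j,j})^\top$ gives the claimed formula for $j>0$.

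The case $j=0$ I would treat separately but identically: here $\bS_0 = \bR_{0,0}\bW_{0,0}\bR_{0,0}^\top$ and $\bQ_0\bR_{0,0} = \bV_0$ directly (since there are no prior blocks), so $\bQ_0\bS_0\bQ_0^\top = \bV_0 \bW_{0,0}\bV_0^\top$. I expect the main (minor) obstacle to be bookkeeping the block identities correctly, in particular verifying that the simplification $\bQ_j\bR_{j,j} = \bQ_j\bQ_j^\top\bV_j$ is valid; beyond that, the corollary is essentially a repackaging of \cref{thm:ASE-generalised-kernel-form} adapted to the Vandermonde/Wronskian structure supplied by \cref{prop:asymptotic-kernel-smooth}.
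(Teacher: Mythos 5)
Your proposal is correct and follows essentially the same route as the paper: full column rank of $\bV_{\le j}$ gives $b_j=\PP_{j,d}-\PP_{j-1,d}=\HH_{j,d}$ (equivalently, the diagonal blocks $\bR_{i,i}$ are square for $i\le j$), which yields part (1) and triggers the simplified Schur-complement formula of Theorem~\ref{thm:ASE-generalised-kernel-form}(2), and the identity $\bQ_j^\top\bV_j=\bR_{j,j}$ (hence $\bQ_j\bR_{j,j}=\bQ_j\bQ_j^\top\bV_j$) converts $\bR$-based expressions into the stated $\bV$-based form. Your slightly more explicit treatment of the $j=0$ case via $\bS_0=\bR_{0,0}\bW_{0,0}\bR_{0,0}^\top$ is in fact cleaner and consistent with the definition of $\bH$ in \eqref{eq:H-triangular-asymptotics}.
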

\begin{proof}
This follows from the fact that the full rank property $\bV_{\le j}$ implies that all the diagonal blocks in the QR decomposition ($\bR_{i,i}$ for $i \le j$) are square (i.e. $b_i=a_i$ in \eqref{eq:block-QR}).
Therefore we obtain the simplified expression \eqref{eq:schur-compl-H}.
The rest follows from the fact that $\bQ_{j}^\T\bV_{j} = \bR_{j,j}$.
\end{proof}

Note that \Cref{thm:ase-smooth} applies also in the the non-unisolvent case (the case when the matrices $\bV_{\le j-1}$ may not be of full column rank).
However, the expressions for the Schur complements are more complicated than the
ones in \Cref{cor:ase-smooth-unisolvent}, see  \cref{sec:non-unisolvent}.

\subsection{Finitely smooth case }
The finitely smooth case appears when there is no such $p$ that satisfies the conditions of \Cref{prop:asymptotic-kernel-smooth}.
This happens, for example, if $\rank \bV_{\le r-1}  < n$ for a radial kernel of the regularity index $r$.
This case can be treated with the following proposition

\begin{proposition}\label{prop:asymptotic-kernel-finite-smoothness}
Let $\psi$ be with regularity $r$ and  $\rank \bV_{\le r-1} < n$.
Then the kernel matrix has the following  asymptotic form in $\flatlim$: 
  \begin{equation}
    \label{eq:kernel-asymptotic-form-finite-smoothness}
    \bKe = \bV \bDe (\bW + o(1)) \bDe \bV^\T,
  \end{equation}
\begin{itemize}
  \item $\bV = \left[ \bV_{\leq r-1}\ \bA \right]$ where $\bA \in \RR^{n \times a_j}$,  is an
    arbitrary full column rank matrix such that $\mspan \bV = \R^n$ (which implies (with $c = n-\rank \bV_{\le r-1}$). 
  \item $\bDe$ is a diagonal scaling matrix, with block structure 
\[
\bDe=\begin{pmatrix}
1  &&&&\\
& \bm{\varepsilon}^{1} \bI_{\HH_{1,d}} &&& \\
&&\ddots&& \\
&&& \bm{\varepsilon}^{r-1} \bI_{\HH_{r-1,d}} & \\
&&& & \bm{\varepsilon}^{r-\frac{1}{2}} \bI_{c}  \\
\end{pmatrix},
\]
where each block of integer valuation $t$ is of size $\HH_{i,d}$. The only block with fractional valuation is the
    last one. 
  \item $\bW$ is an extended ``Wronskian'' matrix with
    the following structure: 
    \begin{equation}
      \label{eq:wronskian-blocks-fs}
      \bW =
      \begin{pmatrix}
        \bW_{\le r-1,\le r-1} &  \zeroes\\
     \zeroes & \psi_{2r-1} \bA^{\dag}  \bD^{(2r-1)} (\bA^{\dag})^{\T}
      \end{pmatrix}.
    \end{equation}
   \end{itemize}
\end{proposition}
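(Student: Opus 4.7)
The plan is to combine the approach of \Cref{prop:asymptotic-kernel-smooth} with a direct treatment of the leading non-smooth contribution to the kernel. First I would split the kernel as
$$k(\bx,\by) = \tilde k(\bx,\by) + \psi_{2r-1}\|\bx-\by\|^{2r-1},$$
so that $\tilde k$ has regularity strictly greater than $r$ (its first non-vanishing odd Taylor coefficient occurs at order at least $2r+1$). Correspondingly, $\bKe = \tilde\bKe + \psi_{2r-1}\varepsilon^{2r-1}\bD^{(2r-1)}$, and I would handle the two summands separately.

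For the smooth summand $\tilde\bKe$, I would apply the Taylor expansion of $\tilde k$ at $(0,0)$ up to order $2(r-1)$. Since $\tilde k$ and $k$ share partial derivatives at $(0,0)$ for $|\alpha|+|\beta|\le 2(r-1)$, the polynomial Taylor terms yield
$$\bV_{\le r-1}\matr{\Delta}_{r-1}(\varepsilon)\bW_{\le r-1,\le r-1}\matr{\Delta}_{r-1}(\varepsilon)\bV_{\le r-1}^\top$$
plus contributions involving higher-degree monomial vectors $\bv_\alpha$ with $|\alpha|>r-1$. By the spanning assumption $\mspan[\bV_{\le r-1},\bA]=\R^n$, each such $\bv_\alpha$ can be written as a linear combination of columns of $\bV=[\bV_{\le r-1},\bA]$, and the corresponding $\varepsilon$-orders ($|\alpha|+|\beta| \ge r$, hence strictly larger than $i+j$ for relevant block indices $i,j\le r-1$) ensure that these extra contributions become $o(1)$ corrections to $\bW$ in the ambient form $\bV\bDe o(1)\bDe\bV^\top$. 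For the non-smooth summand, a direct computation shows that the $\bA$-block of $\bV\bDe\bW\bDe\bV^\top$ equals
$$\psi_{2r-1}\varepsilon^{2r-1}(\bA\bA^\dagger)\bD^{(2r-1)}(\bA\bA^\dagger)^\top,$$
which captures the component of $\psi_{2r-1}\varepsilon^{2r-1}\bD^{(2r-1)}$ supported on $\mspan(\bA)$. The residual $\bD^{(2r-1)}-(\bA\bA^\dagger)\bD^{(2r-1)}(\bA\bA^\dagger)^\top$ lives in the orthogonal complement of $\mspan(\bA)$; via the spanning property it can be expressed using the columns of $\bV_{\le r-1}$ and, being already at scale $\varepsilon^{2r-1}$, is $o(1)$ relative to the natural top-left-block scaling and thus absorbable into a correction $\bW_{\le r-1,\le r-1}+o(1)$.

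The main obstacle is organizing the two kinds of residuals, namely the higher-degree Taylor monomials from Step 2 and the projector residual on $\bD^{(2r-1)}$ from Step 3, into a coherent $o(1)$ correction to $\bW$ that respects the non-uniform valuations of $\bDe$ (integer valuations $0,1,\dots,r-1$ together with the fractional valuation $r-\tfrac12$ on the $\bA$-block). This requires bookkeeping of $\varepsilon$-orders similar to the smooth case but more delicate: here $\bV_{\le r-1}$ is not full row rank, so one must exploit both the spanning of $[\bV_{\le r-1},\bA]$ and the specific form of the $\bA$-block Wronskian $\psi_{2r-1}\bA^\dagger\bD^{(2r-1)}(\bA^\dagger)^\top$ to close the argument and conclude $\bKe = \bV\bDe(\bW+o(1))\bDe\bV^\top$ with the stated $\bW$.
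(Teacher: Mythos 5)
Your approach is in the same spirit as the paper's but organized differently: you split the kernel at the start as $k=\tilde k+\psi_{2r-1}\|\cdot\|^{2r-1}$ and Taylor-expand the smooth part, whereas the paper directly invokes the matrix-level expansion \eqref{eq:kernel_radial_expansion} (quoted from \cite{BarthelmeUsevich:KernelsFlatLimit}) and performs the reduction at the level of that formula. Both routes end up needing the same technical device, which is where your proposal stops short.

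Concretely, the "main obstacle" you flag is not a loose end but the actual content of the proof. The paper resolves it cleanly by introducing the pseudoinverse stack $\widetilde{\bV}=\bigl[\bV_{\le r-1}^{\dagger};\,\bA^{\dagger}\bigr]$ and using the identity $\bI_n=\bV\bDe\,\bDe^{-1}\widetilde{\bV}$ (valid because $\bV=[\bV_{\le r-1}\ \bA]$ is full row rank and the two blocks span \emph{orthogonal} subspaces). Every residual term $T$ is then rewritten as $\bV\bDe\bigl(\bDe^{-1}\widetilde{\bV}\,T\,\widetilde{\bV}^{\T}\bDe^{-1}\bigr)\bDe\bV^\top$, and the non-uniform valuations in $\bDe$ (including the fractional $r-\tfrac12$ on the $\bA$-block) are exactly what makes the middle factor either $o(1)$ or, for the distance term, convergent to the $\bA^{\dag}\bD^{(2r-1)}(\bA^{\dag})^{\T}$ block. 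Your sketch gestures at the same $\varepsilon$-order bookkeeping but never names this conjugation device, and without it the absorption argument is not actually carried out.

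One substantive imprecision: the residual $\bD^{(2r-1)}-(\bA\bA^{\dagger})\bD^{(2r-1)}(\bA\bA^{\dagger})^{\top}$ does \emph{not} live in the orthogonal complement of $\mspan(\bA)$. Writing $\bP=\bA\bA^{\dagger}$, it decomposes into three pieces: $(\bI-\bP)\bD^{(2r-1)}(\bI-\bP)$, $(\bI-\bP)\bD^{(2r-1)}\bP$, and $\bP\bD^{(2r-1)}(\bI-\bP)$, and the last two are mixed (one leg in $\mspan(\bA)$). This does not ruin the argument — since $\bI-\bP=\bV_{\le r-1}\bV_{\le r-1}^{\dagger}$ by orthogonality, each piece is still expressible via $\bV$ and the resulting middle-matrix blocks pick up strictly positive powers of $\varepsilon$ from the $\bDe^{-1}$ factors, hence are $o(1)$ — but the claim as you stated it is false and the correct statement is exactly what the $\widetilde{\bV}$-conjugation computes automatically. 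Also note that your argument implicitly requires $\bA^{\top}\bV_{\le r-1}=\mathbf{0}$ (you use $\bA\bA^{\dagger}$ as an orthogonal projector complementary to $\mspan(\bV_{\le r-1})$), an assumption the paper invokes in its proof and which should be stated explicitly rather than read off from "arbitrary full column rank such that $\mspan\bV=\R^n$."
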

\begin{proof}
The proof is given in \cref{sec:appendix}.
\end{proof}

\begin{theorem}\label{thm:ase-finite-smoothness}
Let $\bKe$ satisfy the conditions of \Cref{prop:asymptotic-kernel-finite-smoothness}, and, in addition $\rank \bV_{\le r-1} < n$.
Let $\bQ_0,\ldots,\bQ_{r-1}$ and $\bR_{0,0}, \ldots, \bR_{r-1,r-1}$ come from the block QR factorization of the matrix
\[
\bV = \bV_{\le r-1} = \begin{pmatrix}\bV_{0} & \bV_{1} & \cdots & \bV_{r-1} \end{pmatrix}.
\]
Then the ASE of $\bKe$ is given by
\begin{equation}
\label{eq:kbar-finite-smoothness}
\Kbe = \sum_{i=0}^{r-1} \bQ_i \bS_i \bQ_i^\T \varepsilon^{2 i} + \varepsilon^{2 r-1} \psi_{2r-1}\bA \bA^{\dag} \bD^{(2r-1)}  \bA \bA^{\dag} ,
\end{equation}
where the terms $\bS_0,\bS_1, \dots $ have exactly the same form as in \Cref{thm:ase-smooth}.
\end{theorem}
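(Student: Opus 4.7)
The proof proceeds as a direct application of \cref{thm:ASE-generalised-kernel-form} to the factorization provided by \cref{prop:asymptotic-kernel-finite-smoothness}, which already puts $\bKe$ into generalised kernel form. In that factorization, $\bV = [\bV_{\le r-1}\ \bA]$ is partitioned into $r+1$ blocks with valuations $0, 1, \ldots, r-1, r-\frac{1}{2}$, and $\bW$ is block-diagonal with upper-left block $\bW_{\le r-1, \le r-1}$ and lower-right block $\psi_{2r-1} \bA^{\dag} \bD^{(2r-1)} (\bA^{\dag})^{\T}$. The block-QR decomposition of $\bV$ yields $r+1$ orthonormal blocks $\bQ_0, \ldots, \bQ_{r-1}, \bQ_r$ and an upper-triangular $\bR$; the rank condition $\mspan \bV = \R^n$ ensures the assumptions of \cref{rem:assumptions_V} are satisfied.

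First I would verify that $\bW$ is invertible, so that \cref{thm:ASE-generalised-kernel-form} applies. The upper-left block is strictly positive definite by \cref{lem:spd_wronskian}. For the lower-right block, I would invoke \cref{lem:distance_cpd}: after choosing $\bA$ orthogonal to $\mspan \bV_{\le r-1}$ (which is permissible since \cref{prop:asymptotic-kernel-finite-smoothness} allows an arbitrary $\bA$ satisfying the span condition), the matrix $(-1)^r \bA^\top \bD^{(2r-1)} \bA$ is strictly positive definite. Since $\bA^{\dag} \bD^{(2r-1)} (\bA^{\dag})^\top$ is congruent to $\bA^\top \bD^{(2r-1)} \bA$ via $(\bA^\top \bA)^{-1}$, it is of definite sign and thus invertible.

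Next I would compute the Schur complements of the matrix $\bH$ defined in \eqref{eq:H-triangular-asymptotics}. The decisive structural observation is that, because $\bW$ is block-diagonal with no coupling between the Wronskian and distance parts, $\bH$ is itself block-diagonal: the upper-left $r \times r$ block coincides exactly with the $\bH$-matrix appearing in the completely smooth case of \cref{thm:ase-smooth}, and the single lower-right block equals $\psi_{2r-1}\bR_{r,r}\bA^{\dag}\bD^{(2r-1)}(\bA^{\dag})^{\T}\bR_{r,r}^{\T}$. As a consequence, the first $r$ Schur complements $\bS_0, \ldots, \bS_{r-1}$ are identical to those of \cref{thm:ase-smooth}, while $\bS_r$ is just the lower-right diagonal block read off directly. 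Inserting everything into \eqref{eq:kbar-generalised-kernel} produces an expression of the same shape as \eqref{eq:kbar-finite-smoothness}.

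The remaining step, and the one I expect to be the main obstacle, is to simplify the last term $\varepsilon^{2r-1} \bQ_r \bS_r \bQ_r^\top$ into the stated form $\varepsilon^{2r-1} \psi_{2r-1} \bA \bA^{\dag} \bD^{(2r-1)} \bA \bA^{\dag}$. With $\bA$ chosen orthogonal to $\mspan \bV_{\le r-1}$, the QR decomposition gives $\bA = \bQ_r \bR_{r,r}$ with $\bR_{r,r}$ square and invertible, so $\bA\bA^\dag = \bQ_r \bQ_r^\top$ and $\bR_{r,r} \bA^\dag = \bQ_r^\top$; direct substitution then yields the stated identity. The subtlety lies in arguing that the final expression is genuinely independent of the particular admissible $\bA$: since the ASE is an intrinsic invariant of $\bKe$, the right-hand side must depend on $\bA$ only through the orthogonal projector onto the complement of $\mspan \bV_{\le r-1}$ in $\R^n$, which for the canonical choice above is precisely $\bA\bA^{\dag}$.
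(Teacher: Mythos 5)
Your proof is correct and follows the same route as the paper: apply \cref{thm:ASE-generalised-kernel-form} to the factorization from \cref{prop:asymptotic-kernel-finite-smoothness}, exploit the block-diagonal structure of $\bW$ so that the first $r$ Schur complements coincide with the smooth case and $\bS_r$ is read off from the last diagonal block, and simplify $\bQ_r\bS_r\bQ_r^\top$ to the stated form using $\bA\bA^\dag = \bQ_r\bQ_r^\top$.

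One place you are actually more careful than the paper: the paper's proof asserts that $\bW$ is strictly positive definite by invoking \cref{lem:spd_wronskian} alone, but that lemma only covers the upper-left Wronskian block; your observation that invertibility of the lower-right block $\psi_{2r-1}\bA^{\dag}\bD^{(2r-1)}(\bA^{\dag})^\T$ requires \cref{lem:distance_cpd} together with a congruence argument (and that this block is definite of sign $(-1)^r\psi_{2r-1}$, not automatically positive) correctly repairs a gap in the paper's terse argument, and is exactly what \cref{thm:ASE-generalised-kernel-form} actually requires. Your closing remark on independence of the admissible choice of $\bA$ is also a worthwhile detail the paper leaves implicit.
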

\begin{proof}
By \Cref{prop:asymptotic-kernel-finite-smoothness}, $\bKe$ has factorization \eqref{eq:kernel-asymptotic-form-smooth}, where the matrix $\bW$ is strictly positive definite by \Cref{lem:spd_wronskian}. 
Finally we note that, since $\bA^\T \bV_{\le r-1}$, the QR decomposition \eqref{eq:block-QR} of the matrix $\bV$ in \Cref{prop:asymptotic-kernel-finite-smoothness} is given by
\[
\bV  = 
  \begin{pmatrix}
    \bQ_0 &  \ldots & \bQ_{r-1} & \bA
  \end{pmatrix}
  \begin{pmatrix}
    \bR_{0,0} &  \ldots & \bR_{0,r-1} &  \\
              & \ddots & \vdots       &   \\
              &        & \bR_{p,p}    &  \\
              &        &              & \bI_{c} \\
  \end{pmatrix},
\]
with $c = n - \rank \bV_{\le r-1}$
hence, we can  again apply \Cref{thm:ASE-generalised-kernel-form}, and the statements of the Theorem follows from \Cref{thm:ASE-generalised-kernel-form}, 1--2.
\end{proof}

As a corollary of \Cref{thm:ase-finite-smoothness}, we recover both \cite[Conjecture 1]{BarthelmeUsevich:KernelsFlatLimit} and 
\cite[Theorem 6.3]{BarthelmeUsevich:KernelsFlatLimit}
 in the finite smoothness case.
\begin{corollary}
\begin{enumerate}
\item For $j \le r-1$, if $\bV_{\le j}$ if full column rank, then the number of eigenvalues of the order $\varepsilon^{2j}$  is exactly equal to  $\HH_{j,d}$, and the expressions for the Schur complement are as in \Cref{cor:ase-smooth-unisolvent}.
\item If, in addition $\bV_{\le r-1}$ is full column rank, then there are exactly $n - \PP_{r-1,d}$ eigenvalues of order $\varepsilon^{2r-1}$ corresponding to the ASE term $\psi_{2r-1}\bA \bA^{\dag} \bD^{(2r-1)}  \bA \bA^{\dag}$. 
\end{enumerate}
\end{corollary}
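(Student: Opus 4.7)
The plan is to read both statements directly from the ASE
\[
\Kbe = \sum_{i=0}^{r-1} \varepsilon^{2i}\, \bQ_i \bS_i \bQ_i^\top + \varepsilon^{2r-1}\, \psi_{2r-1}\, \bA\bA^{\dag}\,\bD^{(2r-1)}\,\bA\bA^{\dag}
\]
established in \cref{thm:ase-finite-smoothness}. Recall that by \cref{def:ASE} the number of eigenvalues of $\bKe$ with valuation $\alpha$ is exactly the rank of the coefficient of $\varepsilon^{\alpha}$ in $\Kbe$. Since the blocks $\bQ_0,\ldots,\bQ_{r-1},\bA$ are column-orthogonal (they are the $\bQ$-factor of the QR factorization of $\bV=[\bV_{\le r-1}\ \bA]$), the ranks of the five summands add up independently.

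For statement (1), I would simply transport the argument of \cref{cor:ase-smooth-unisolvent} into the finitely smooth setting. If $\bV_{\le j}$ has full column rank, then in the block QR factorization \eqref{eq:block-QR} the diagonal block $\bR_{j,j}$ is square of size $\HH_{j,d}\times\HH_{j,d}$ and invertible, and so is every preceding $\bR_{i,i}$. This has two consequences: (a) the simplified Schur-complement formula \eqref{eq:schur-compl-H} applies, giving
\[
\bS_j=\bR_{j,j}\bigl(\bW_{j,j}-\bW_{j,<j}(\bW_{<j,<j})^{-1}\bW_{<j,j}\bigr)\bR_{j,j}^\top;
\]
(b) by \cref{lem:spd_wronskian} the Wronskian Schur complement in the middle is positive definite, so $\bS_j$ is positive definite of size $\HH_{j,d}$ and hence full rank. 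The corresponding ASE block $\bQ_j\bS_j\bQ_j^\top$ therefore contributes exactly $\HH_{j,d}$ nonzero eigenvalues at order $\varepsilon^{2j}$. Formula \eqref{eq:ase-term-unisolvent} is then obtained by expanding $\bQ_j\bR_{j,j}=\bQ_j\bQ_j^\top\bV_j$ on both sides, exactly as in \cref{cor:ase-smooth-unisolvent}.

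For statement (2), applying part (1) at every $j\le r-1$ accounts for $\sum_{i=0}^{r-1}\HH_{i,d}=\PP_{r-1,d}$ eigenvalues at integer orders. The remaining $n-\PP_{r-1,d}$ eigenvalues must come from the fractional-order term, so it suffices to show that
\[
\bM:=\bA\bA^{\dag}\,\bD^{(2r-1)}\,\bA\bA^{\dag}
\]
has rank exactly $c=n-\PP_{r-1,d}$. Here $\bA$ has $c$ columns and, by construction of the QR factorization used in \cref{thm:ase-finite-smoothness}, these columns are orthogonal to $\bV_{\le r-1}$, i.e.\ $\bA^\top\bV_{\le r-1}=\zeroes$. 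Therefore $\bA\bA^{\dag}$ is the orthogonal projector onto $\mspan\bA$ and $\rank\bM=\rank(\bA^\top\bD^{(2r-1)}\bA)$. By \cref{lem:distance_cpd}, $(-1)^r\bA^\top\bD^{(2r-1)}\bA$ is strictly positive definite and hence of full rank $c$, as required.

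The only subtlety worth flagging is point (b) above and the identification $\bA^\top\bV_{\le r-1}=\zeroes$ for the $\bA$ appearing in \cref{prop:asymptotic-kernel-finite-smoothness}. \Cref{prop:asymptotic-kernel-finite-smoothness} only requires $\mspan[\bV_{\le r-1}\ \bA]=\R^n$; however the QR argument inside \cref{thm:ase-finite-smoothness} effectively replaces $\bA$ by its component orthogonal to $\mspan\bV_{\le r-1}$, and this is the $\bA$ that enters the last term of the ASE. With this identification the hypothesis of \cref{lem:distance_cpd} is satisfied and the rank count closes.
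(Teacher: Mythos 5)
Your proposal is correct and follows essentially the same route as the paper: part (1) is transported from \cref{cor:ase-smooth-unisolvent} using the fact that a full-column-rank $\bV_{\le j}$ makes $\bR_{j,j}$ square and invertible so that, by \cref{lem:spd_wronskian}, $\bS_j$ is positive definite of full rank $\HH_{j,d}$; part (2) reduces to showing that $\bA\bA^{\dag}\bD^{(2r-1)}\bA\bA^{\dag}$ has rank $c=n-\PP_{r-1,d}$, which follows from \cref{lem:distance_cpd}. The paper's proof is much terser (it merely cites the earlier corollary and the lemma); you fill in the projector/rank bookkeeping explicitly, and you usefully flag the implicit normalization $\bA^\top\bV_{\le r-1}=\zeroes$ that makes $\bA$ agree with the last $\bQ$-block of the QR factorization — a point the paper glosses over (and even contains a typographical slip, writing "since $\bA^{t}\bV_{\le r-1}$" where "$=\zeroes$" is clearly intended).
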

\begin{proof}
The first part of the corollary is proved similarly to \Cref{cor:ase-smooth-unisolvent}.
The second part follows from \Cref{lem:distance_cpd}, which implies that  the $\bA^{\dag} \bD^{(2r-1)}  \bA$ has rank $c$ (and so has the matrix $\bA\bA^{\dag} \bD^{(2r-1)}  \bA\bA^{\dag}$).
\end{proof}

\subsection{Non-unisolvent case}\label{sec:non-unisolvent}
Finally, we make some remarks on the non-unisolvent case, i.e. the case where
the Vandermonde matrix $\bV_{\leq s}$ are rank-deficient for some $s$.
We make first the remark on the ranks on the Vandermonde matrices. 
\begin{lemma}\label{lem:vandermonde-rank-increase}
If all the points  $\bx_{i}$, $i \in \{1,\ldots,n\}$ are distinct, then we have that  $\rank \bV_{\leq s} > \rank \bV_{\leq s-1}$ for all $1 \le s < n$.
\end{lemma}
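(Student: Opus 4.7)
The plan is to study the rank function $f(s) := \rank \bV_{\le s}$, which is non-decreasing in $s$, starts at $f(0) = 1$, and is bounded above by $n$. I will establish two structural facts that together force strict growth at every step until $f$ hits $n$: first, if $f$ ever fails to strictly increase from one step to the next, it remains constant at every subsequent step; second, $f$ eventually hits $n$ because distinct points can be separated by polynomials. Combining these two facts forces $f(s) > f(s-1)$ whenever $f(s-1) < n$, which in particular covers the claimed range $1 \le s < n$.

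The first fact rests on an algebraic observation that I would phrase as follows. Suppose $\rank \bV_{\le s} = \rank \bV_{\le s-1}$. Then the new columns of $\bV_{\le s}$—the evaluation vectors $[\bx_i^{\bal}]_i$ indexed by monomials of degree exactly $s$—already lie in the column span of $\bV_{\le s-1}$. Equivalently, on $\X$ every degree-$s$ monomial agrees with some polynomial of degree $\le s-1$. Multiplying any such identity by a coordinate $x_k$ shows that each monomial of degree $s+1$ agrees on $\X$ with a polynomial of degree $\le s$, which by the hypothesis itself reduces again to degree $\le s-1$. Iterating this argument on the total degree yields $f(s') = f(s-1)$ for all $s' \ge s - 1$.

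The second fact follows from the classical Lagrange-type interpolation construction, made multivariate via linear separation. Because the $\bx_i$ are distinct, for each pair $i \ne j$ one can choose an affine linear form $\ell_{ij}$ with $\ell_{ij}(\bx_i) \ne \ell_{ij}(\bx_j)$. Setting
\[
L_i(\bx) \;:=\; \prod_{j \ne i} \frac{\ell_{ij}(\bx) - \ell_{ij}(\bx_j)}{\ell_{ij}(\bx_i) - \ell_{ij}(\bx_j)}
\]
produces a polynomial of degree at most $n-1$ satisfying $L_i(\bx_j) = \delta_{ij}$. The evaluation vectors of the $L_i$ form the standard basis of $\R^n$, so $f(n-1) = n$, which in particular means $f$ cannot stay strictly below $n$ forever.

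Combining the two: if $f(s-1) < n$ at some $s \ge 1$, then by the first fact the equality $f(s) = f(s-1)$ would freeze $f$ at a value $< n$ for all larger $s$, contradicting the second fact. Hence $f(s) > f(s-1)$. A short induction from $f(0) = 1$ then gives $f(s) \ge s + 1$ on the range $1 \le s \le n-1$ where the hypothesis $f(s-1) < n$ applies, which covers the statement. The main conceptual step is the multiplicative-closure argument in the first fact; everything else is bookkeeping, and I do not foresee a deeper obstacle.
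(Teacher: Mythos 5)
Your proof is correct, and it takes a genuinely different route from the paper's. The paper projects the nodes onto a line via a direction $\ba$ with distinct $t_i = \ba^{\T}\bx_i$, observes that the univariate Vandermonde matrix $\widetilde{\bV}_{\le s} = [t_i^j]$ has full column rank $s+1$ for $s\le n-1$, and argues that since $[t_i^s]_i \in \mspan(\bV_s)$ this vector witnesses the rank increase. You instead combine two structural facts: (a) if the rank ever stagnates it stays frozen for all higher degrees, because every degree-$(s+1)$ monomial is a coordinate times a degree-$s$ monomial, and (b) the rank eventually reaches $n$, via the Lagrange polynomials built from products of affine forms. These two facts together force strict growth below saturation, by contradiction.

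Your route is in fact the safer one. The paper's direct inference is not airtight as written: independence of $[t_i^s]_i$ from $\{[t_i^j]_i\}_{j<s}$ does not imply $[t_i^s]_i \notin \mspan(\bV_{\le s-1})$, since the latter span is in general strictly larger than $\mspan(\{[t_i^j]_i\}_{j<s})$. A concrete illustration: take $d=2$, $n=5$ nodes $(x_i,x_i^2)$ on the parabola $y=x^2$ with $x_i=0,1,2,3,4$, and the direction $\ba=(1,0)$. Then the $t_i = x_i$ are distinct, yet $[t_i^2]_i = [y_i]_i \in \mspan(\bV_{\le 1})$, so the paper's chosen witness fails for this $\ba$, even though the lemma's conclusion is still true ($\rank \bV_{\le 2}=5 > 3 = \rank \bV_{\le 1}$, thanks to $xy$ and $y^2$). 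Your stabilization lemma (Fact~1) is exactly the missing ingredient: given it, the paper's projection can be salvaged by using $\rank\widetilde{\bV}_{\le n-1}=n$ to conclude $\rank\bV_{\le n-1}=n$, in place of your explicit Lagrange construction. One minor caveat, which applies equally to both proofs: the literal statement ``for all $1\le s<n$'' cannot hold once $\rank\bV_{\le s-1}$ already equals $n$ (e.g.\ $d=2$, $n=3$ non-collinear points, $s=2$), so the lemma should be read as asserting strict growth until saturation; your combine step correctly restricts to $f(s-1)<n$, which is also the regime in which the paper invokes the result to ensure the diagonal blocks of the QR factorisation are nonempty.
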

\begin{proof}
If all the points are distinct, then there exists a vector $\ba \in \RR^{n}$ such that all $t_i  = \ba^\T \bx_{i}$ are distinct.
Note that the univariate Vandermonde matrices $\widetilde{\bV}_{\le s} = [t^{j}_i]^{n,s}_{i=1,j=0}$ are full column rank for $s \le n-1$.
Note that the last column of $\widetilde{\bV}_{\le s}$ lies in  $\mspan{\bV}_{s}$. Hence, for all $1 \le s \le n-1$, $\mspan{\bV}_{s}$ contains at least one vector that does not belong to  $\mspan{\bV}_{\le s-1}$, and therefore $\rank \bV_{\leq s} > \rank \bV_{\leq s-1}$.
\end{proof}

Note that Theorems~\ref{thm:ase-smooth}~and~\ref{thm:ase-finite-smoothness} still apply, and as a special case, we generalize the results in \cite{wathen2015eigenvalues} (see \cite[Theorem 8]{wathen2015eigenvalues} where the number of eigenvalues of given order is provided for analytic kernels).
\begin{corollary}\label{cor:ase-non-unisolvent}
Assume that the conditions of \Cref{thm:ase-smooth}~or \Cref{thm:ase-finite-smoothness} hold and $j \le r-1$, and all the points $\bx_i$ are distinct.
\begin{enumerate}
\item If $\rank \bV_{\le j}$ is rank deficient but $\rank \bV_{\le j-1}$ is full column rank (rank $\PP_{j-1,d}$), then there are $\rank \bV_{\le j} - \PP_{j-1,d}$ eigenvalues of the degree $\varepsilon^{2j}$ and the formula \eqref{eq:ase-term-unisolvent} is still valid for the corresponding term of the ASE.
\item If $\rank \bV_{\le j}$ and $\rank \bV_{\le j-1}$ are both rank deficient (i.e., $\rank \bV_{\le j}<\PP_{j,d}, \rank \bV_{\le j-1} < \PP_{j-1,d}$, then there are $\rank \bV_{\le j} - \rank \bV_{\le j-1}$ eigenvalues of degree  $\varepsilon^{2j}$  and the matrix $\bS_j$ is obtained from the Schur complement (with respect to the last block) of
\[
\bH = 
  \begin{pmatrix}
    \bR_{0,0}\bW_{0,0}\bR_{0,0}^\T &  \dots & \bR_{0,0} \bW_{0,j}\bR_{j,j}^\T \\
    \vdots \dots & \dots \\
    \bR_{j,j}\bW_{j,0} \bR_{0,0}^\T & \dots &\bR_{j,j}\bW_{j,j} \bR_{j,j}^\T
  \end{pmatrix}.
\]
\end{enumerate}
\end{corollary}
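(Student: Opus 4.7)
The plan is to apply \cref{thm:ASE-generalised-kernel-form} (via \cref{thm:ase-smooth} or \cref{thm:ase-finite-smoothness}) directly to the factorisation \eqref{eq:kernel-asymptotic-form-smooth}, being careful to track the block sizes $b_i$ in the block QR decomposition \eqref{eq:block-QR} of $\bV_{\le j}$ when the Vandermonde matrix is rank deficient. The counting statement in both cases is an immediate consequence of \cref{rem:assumptions_V}: the number of eigenvalues carrying the order $\varepsilon^{2i}$ equals the new dimension introduced at level $i$, namely $b_i = \rank \bV_{\le i} - \rank \bV_{\le i-1}$. By \cref{lem:vandermonde-rank-increase}, distinctness of the points guarantees $b_i \ge 1$ for all $i \le n-1$, so the block structure assumed in \cref{rem:assumptions_V} is in force. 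Case (1) then gives $b_j = \rank \bV_{\le j} - \PP_{j-1,d}$ and case (2) gives $b_j = \rank \bV_{\le j} - \rank \bV_{\le j-1}$, which are exactly the counts in the statement.

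For part (1), the hypothesis that $\bV_{\le j-1}$ is full column rank means that the diagonal blocks $\bR_{0,0}, \ldots, \bR_{j-1,j-1}$ in \eqref{eq:block-QR} are all square (and invertible, since $\bV_{\le j-1}$ has full column rank). This is precisely the invertibility assumption used in the proof of \cref{thm:ASE-generalised-kernel-form}, part (2), to derive the simplified Schur-complement formula \eqref{eq:schur-compl-H} for $\bS_j$. Although the $j$-th diagonal block $\bR_{j,j}$ itself is no longer square (it has fewer rows than columns because $\bV_{\le j}$ is rank deficient), the derivation of \eqref{eq:schur-compl-H} only uses invertibility of the earlier Schur complements, and this part of the argument goes through unchanged. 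Substituting the identity $\bR_{j,j} = \bQ_j^\top \bV_j$ (which still holds from the QR decomposition) into \eqref{eq:schur-compl-H} yields the expression \eqref{eq:ase-term-unisolvent} advertised in \cref{cor:ase-smooth-unisolvent}, so the term $\bQ_j \bS_j \bQ_j^\top$ in the ASE has the desired form.

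For part (2), the simplified formula fails: rank deficiency of $\bV_{\le j-1}$ forces at least one earlier block $\bR_{i,i}$ ($i < j$) to be non-square, so the block-diagonal matrix $\diag(\bS_0, \ldots, \bS_{j-1})$ that appeared in the proof of \cref{thm:ASE-generalised-kernel-form} part (2) is no longer invertible and the manipulation leading to \eqref{eq:schur-compl-H} breaks down. Instead, one invokes only part (1) of \cref{thm:ASE-generalised-kernel-form}, which realises $\bS_j$ as the final Schur complement of the block matrix $\bH$ from \eqref{eq:H-triangular-asymptotics} truncated at the $j$-th row and column; written out explicitly, this is the matrix displayed in the statement, with entries $\bR_{i,i} \bW_{i,l} \bR_{l,l}^\top$. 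Since $j \le r-1$, only the Wronskian blocks $\bW_{i,l}$ with $i,l \le j \le r-1$ enter, which are well defined by \cref{def:wronskian}; in particular the argument is insensitive to whether the global expansion comes from \cref{prop:asymptotic-kernel-smooth} or from the finite-smoothness expansion \cref{prop:asymptotic-kernel-finite-smoothness}. The only mildly delicate point is checking that $b_i \ge 1$ for every intermediate $i \le j$ so that the block structure indexes are honest, but this is exactly the content of \cref{lem:vandermonde-rank-increase}, so no further obstacle arises.
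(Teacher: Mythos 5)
Your proposal is correct and takes essentially the same route as the paper's (much terser) proof: both parts rest on the counting identity $b_i=\rank\bV_{\le i}-\rank\bV_{\le i-1}$ from \cref{rem:assumptions_V}, the positivity guarantee from \cref{lem:vandermonde-rank-increase}, and the realisation of $\bS_j$ as the Schur complement of the matrix $\bH$ in \eqref{eq:H-triangular-asymptotics} via \eqref{eq:ScH}, with the simplification to \eqref{eq:ase-term-unisolvent} available exactly when $\bV_{\le j-1}$ has full column rank so that the diagonal blocks $\bR_{i,i}$ ($i<j$) are square and invertible. The only stylistic remark is that, in part (1), what the cancellation argument in the proof of \cref{thm:ASE-generalised-kernel-form}(2) actually requires is invertibility of $\diag(\bR_{0,0},\dots,\bR_{j-1,j-1})$ (and of the leading Wronskian minor), not of the Schur complements themselves — but this mirrors a slip in the paper's own wording and does not affect the correctness of either argument.
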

\begin{proof}
The statements about the numbers of the eigenvalues follow from \Cref{rem:assumptions_V}, where $b_i$ gives the number of rows in the diagonal block $\bQ_{i,i}$ of the QR decomposition. 
Note that $b_i$ is nonzero by \Cref{lem:vandermonde-rank-increase}.
Finally, the expressions for the Schur complement follow from combining
\eqref{eq:ScH} and \eqref{eq:H-triangular-asymptotics}.
\end{proof}

\begin{remark}
  The Vandermonde matrices $\matr{V}_{\le j}$ are rank-deficient if the points
  $x_1 \dots x_n$ are sampled from an algebraic variety having polynomial equations of degree $\leq j$. For
  example, consider points sampled on a circle (i.e., $x_1^2 + x_2^2 = 1$), or,
  in general a conic section, in $d=2$. Then we have that
\[
  \rank \bV_{\le j} = 2j + 1,
\]
which is smaller than $\PP_{j,2} = \binom{j+2}{2}$ as long as $j \ge 2$.
In this case, \Cref{cor:ase-non-unisolvent} gives the limiting eigenvector and eigenvalues for kernel matrices corresponding to nodes on a circle.

The example of the circle can be generalized to the case when the points $\bx_j$
lie on an algebraic variety. In this case, the ranks of the Vandermonde matrix
are connected to the Hilbert function \cite[Ch. 9]{cox1997ideals} of the
corresponding polynomial ideal (see also \cite{altschuler2023kernel} for
examples of Hilbert functions). Note that for some particular algebraic
varieties (e.g., spheres) it may be more beneficial to use some predefined basis
(e.g., spherical harmonics \cite{fornberg2011stable}) instead of multivariate
monomials.
\end{remark}

\subsection{Numerical illustration}
\label{sec:numerics}

We illustrate our results with two examples in dimension 2, and contrast unisolvent to non-unisolvent sets.

\begin{figure}[ht]
  \centering
 
  \includegraphics[width=12cm]{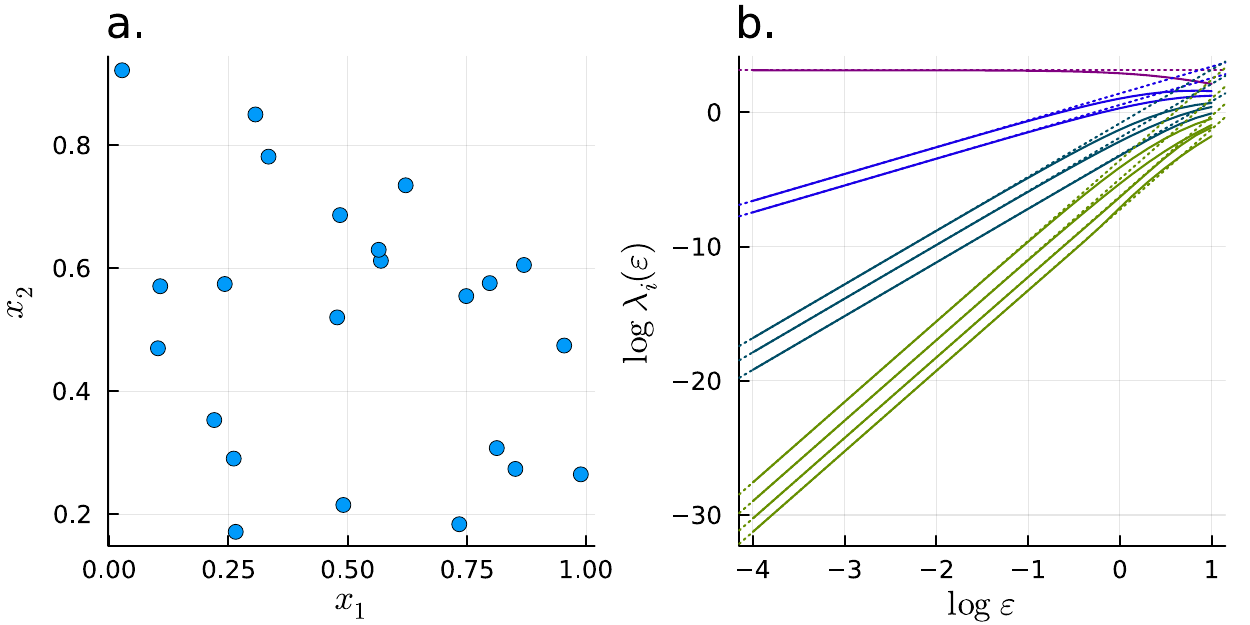}
  \caption{\label{fig:eigen-generic} Eigenvalues of a Gaussian kernel matrix for a unisolvent node set. \textbf{a.} Nodes drawn i.i.d. from the unit square \textbf{b.} The $10$ largest eigenvalues of $\bKe$ for this node set, as a function of $\varepsilon$ (in log-log scale). Groups of eigenvalues with different valuations appear in different colors. There is $1$ eigenvalue with valuation $0$, $2$ eigenvalues with valuation $2$, $3$ eigenvalues with valuation $4$, $4$ eigenvalues with valuation $6$, etc. The asymptotic approximation of an eigenvalue, keeping only the leading term, corresponds to a line in log-log space. These approximations are shown here as dotted lines (note the varying slopes, correspond to different valuations). }
\end{figure}

\begin{figure}[ht]
  \centering
 
  \includegraphics[width=12cm]{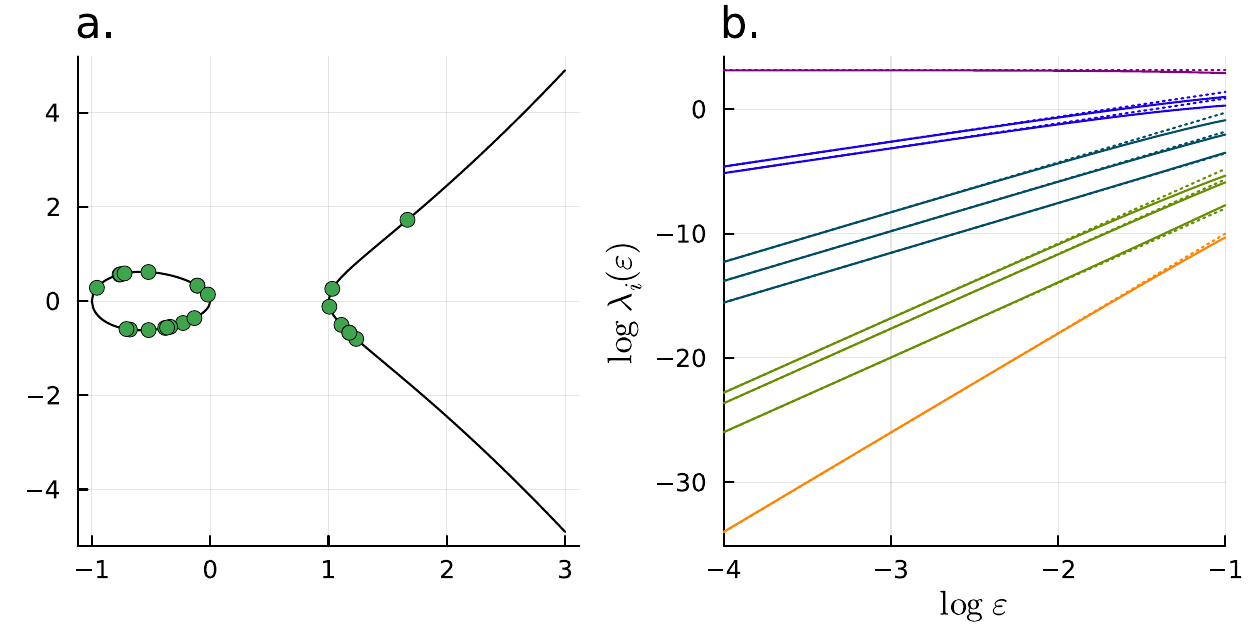}
  \caption{\label{fig:eigen-non-uni} Eigenvalues of a Gaussian kernel matrix for a non-unisolvent node set. \textbf{a.} Nodes drawn i.i.d. from the affine variety $x_2^2 = x_1^3 - x_1$ (the underlying curve is shown as a solid line). This node set is non-unisolvent at degree $3$ and higher, see text.  \textbf{b.} The $10$ largest eigenvalues of $\bKe$ for this node set, as a function of $\varepsilon$ (in log-log scale). Groups of eigenvalues with different valuations appear in different colors. There is $1$ eigenvalue with valuation $0$, $2$ eigenvalues with valuation $2$, $3$ eigenvalues with valuation $4$, but only $3$ eigenvalues with valuation $6$. All further groups of eigenvalues are also of size $\leq 3$. Compared to the unisolvent set, the eigenvalue groups are smaller starting at valuation $8$, which corresponds to polynomials of degree 4.  }
\end{figure}

In our first example (fig. \ref{fig:eigen-generic}), nodes are drawn i.i.d.
uniformly from the unit square. In such configurations, the node set is
unisolvent almost surely. The Gaussian kernel $\psi(r)=\exp(-r^2)$ is completely smooth, and cor. \ref{cor:ase-smooth-unisolvent} applies. As a consequence, the successive blocks of eigenvalues of with asymptotic behaviour in $\varepsilon^0,\varepsilon^2,\varepsilon^4,\dots $ have size equal to $\HH_{0,2},\HH_{1,2},\HH_{2,d},\HH_{3,d},\HH_{4,d},\dots =1,2,3,4,\dots $. In general the $i$-th block has valuation $2i$ and is of size $\HH_{i,d}=i+1$. The leading coefficients can be obtained from eq. \eqref{eq:ase-term-unisolvent}. In fig. \ref{fig:eigen-generic}, we show the eigenvalues of the example matrix as a function of $\varepsilon$, along with the asymptotic approximations.

In our second example (fig. \ref{fig:eigen-non-uni}), the nodes are sampled i.i.d. from the algebraic curve $x_2^2 = x_1^3 - x_1$. This is an affine variety of dimension one and degree 3, and the node set is non-unisolvent for polynomials of degree 3 and higher. More precisely, the blocks $\bR_{i,i}$ in the QR decomposition of the Vandermonde matrix (which have size $\HH_{i,2}=i-1, i>0$) have rank $\leq 3$ for $i\geq 3$. Compared to the unisolvent case, we still have blocks of eigenvalues with asymptotic behaviour in $\varepsilon^0,\varepsilon^2,\varepsilon^4,\dots $ but starting at valuation $8$ (which corresponds to degree $4$) these blocks are all of size $\leq 3$. The leading coefficients of the eigenvalues can be obtained from cor. \ref{cor:ase-non-unisolvent}.

\begin{remark}
Just before the submission of this manuscript, we became aware of the paper by Diab and Batenkov \cite{diab2024spectralpropertiesinfinitelysmooth} that  investigated the asymptotic eigenvalues in the non-unisolvent case, using tools similar to the ones  in   \cite{BarthelmeUsevich:KernelsFlatLimit}.
\end{remark}

\section{ASE in the degenerate case}
\label{sec:iterative-alg}

The goal of this section is to address the general case of positive definite $\bKe$.
In fact, not every matrix $\bKe$ can be reduced to the generalized kernel form \eqref{eq:generalised-kernel}.
As an example, consider the following matrix:
\[
\bKe = 
\begin{pmatrix}
1 & \ep1 & \ep1 \\
\ep1 & 2\ep2 & \ep2 \\
\ep1 & \ep2 & \ep2 + \ep3
\end{pmatrix}.
\]
Indeed, we can represent the matrix in the diagonal scaling form as
\begin{equation}\label{eq:degenerate-example}
\bKe = 
\begin{pmatrix}
1 &  & \\
 & \ep1 & \\
 & & \ep1
\end{pmatrix}
\begin{pmatrix}
1 & 1 & 1 \\
1 & 2 & 1 \\
1 & 1 & 1 + \ep1
\end{pmatrix}
\begin{pmatrix}
1 &  & \\
 & \ep1 & \\
 & & \ep1
\end{pmatrix}.
\end{equation}
We see that the last Schur complement of the last block is
\[
\begin{pmatrix}
2 & 1 \\
1 & 1 
\end{pmatrix}-
\begin{pmatrix}
1\\1
\end{pmatrix}
\begin{pmatrix}
1&1
\end{pmatrix} =\begin{pmatrix}
1 & 0 \\
0 & 0 
\end{pmatrix},
\]
therefore, by  \Cref{thm:ASE-H} the first two eigenvalues are $O(1)$, $O(\ep2)$ and the  last eigenvalue is at least $O(\ep3)$.
In addition, the first two limiting eigenvectors are 
\[
\begin{pmatrix}1 \\ 0 \\ 0 \end{pmatrix}\quad\text{and}\quad \begin{pmatrix}0 \\ 1 \\ 0 \end{pmatrix},
\]
respectively, and therefore the limiting eigenvectors are given by  $\matr{U}_0 = \matr{I}$.

However, we cannot bring the matrix to the form \eqref{eq:generalised-kernel} with the diagonal scaling
\[
\begin{pmatrix}
1 &  & \\
 & \ep1 & \\
 & & \ep{\frac32}
\end{pmatrix},
\]
thus we cannot  use \Cref{thm:ASE-H} to get all the information on the ASE.
In order to deal with such cases, we are going to propose an iterative reduction strategy, also based on Schur complements.

\subsection{Schur complement in the diagonally scaled form}
Then we are able to derive the following result, that helps us to continue the reduction beyond the case in \Cref{thm:ASE-H}.
\begin{theorem}
  \label{thm:iteration}
Let $\bKe$ be partitioned as 
\[
\bKe = \bDe \begin{pmatrix}
\bH_{11}(\varepsilon) & \bH_{12}(\varepsilon) \\ 
\bH_{21}(\varepsilon) & \bH_{22}(\varepsilon) \\ 
\end{pmatrix}
 \bDe,
\]
where $\bH_{11}(\varepsilon) \, \bH_{22}(\varepsilon)$ are $m\times m$ and $(n-m) \times (n-m)$ symmetric, $\bH_{21}(\varepsilon)  = \bH_{21}^{\T}(\varepsilon)$, $\bH_{11}(0)$ invertible  and 
\begin{equation}\label{eq:bDe_degenerate}
\bDe=
\begin{pmatrix}
\boldsymbol{\Delta}_m(\varepsilon)  & \\
& \varepsilon^{\frac{s}{2}} \bI_{n-m}\\
\end{pmatrix},\quad 
\boldsymbol{\Delta}_m(\varepsilon) = 
\diag( \varepsilon^{\gamma_1}, \dots, \varepsilon^{{\gamma_m}}).
\end{equation}
where $\gamma_1 \le \cdots \le \gamma_m < \frac{s}{2}$, where $s$ is an integer.
Define the following block diagonal matrix:
\begin{equation}\label{eq:rotated_matrix_Qe}
\bK'(\varepsilon) =
\begin{pmatrix}
\boldsymbol{\Delta}_m(\varepsilon) \bH_{11}(\varepsilon) \boldsymbol{\Delta}_m(\varepsilon) & 0 \\ 
0    & \varepsilon^{2s} (\bH_{22}(\varepsilon) -  \bH_{21}(\varepsilon)\bH^{-1}_{11}(\varepsilon)\bH_{12}(\varepsilon)) \\ 
\end{pmatrix}.
\end{equation}
Then $\bKe \equivinf \bK'(\varepsilon)$.
\end{theorem}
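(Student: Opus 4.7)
The plan is to reduce $\bKe$ to the block-diagonal form $\bK'(\varepsilon)$ via a block $LDL^{\T}$ factorisation of $\bH(\varepsilon)$, and then to show that the remaining congruence factor, which turns out to be $\bI + o(1)$, leaves the asymptotic spectral equivalent unchanged. Since $\bH_{11}(0)$ is non-singular by assumption, $\bH_{11}(\varepsilon)$ is invertible in a neighbourhood of $0$ and $\bH(\varepsilon)$ admits the standard block factorisation
\[
\bH(\varepsilon) = \bL(\varepsilon)\begin{pmatrix} \bH_{11}(\varepsilon) & \zeroes \\ \zeroes & \widetilde{\bH}_{22}(\varepsilon) \end{pmatrix}\bL^{\T}(\varepsilon), \quad \bL(\varepsilon) = \begin{pmatrix} \bI_m & \zeroes \\ \bH_{21}\bH_{11}^{-1} & \bI_{n-m} \end{pmatrix},
\]
with $\widetilde{\bH}_{22}(\varepsilon) = \bH_{22}(\varepsilon) - \bH_{21}(\varepsilon)\bH_{11}^{-1}(\varepsilon)\bH_{12}(\varepsilon)$ the Schur complement appearing in \eqref{eq:rotated_matrix_Qe}.

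Wrapping by $\bDe$ and inserting $\bDe\bDe^{-1}$ in the middle yields $\bKe = \bT(\varepsilon)\bK'(\varepsilon)\bT^{\T}(\varepsilon)$ with $\bT(\varepsilon) = \bDe\bL(\varepsilon)\bDe^{-1}$. A direct calculation gives
\[
\bT(\varepsilon) = \begin{pmatrix} \bI_m & \zeroes \\ \varepsilon^{s}\bH_{21}(\varepsilon)\bH_{11}^{-1}(\varepsilon)\boldsymbol{\Delta}_m^{-1}(\varepsilon) & \bI_{n-m} \end{pmatrix},
\]
and the hypothesis $s > \gamma_j$ for every $j$ makes each entry of the off-diagonal block of order $\varepsilon^{s-\gamma_j} = o(1)$. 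Hence $\bT(\varepsilon) = \bI + o(1)$ and, in particular, $\bT(\varepsilon)^{-1} = \bI + o(1)$. The theorem therefore reduces to showing that congruence by such a matrix preserves the ASE.

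I would establish this last point by comparing the regularised inverses of $\bKe$ and of $\bK'(\varepsilon)$ and invoking \cref{lem:RI}. Starting from the identity
\[
\bKe + \tau\varepsilon^{s'}\bI = \bT\bigl(\bK'(\varepsilon) + \tau\varepsilon^{s'}(\bT^{\T}\bT)^{-1}\bigr)\bT^{\T},
\]
setting $\bE(\varepsilon) = (\bT^{\T}\bT)^{-1} - \bI = o(1)$, and using the factorisation
\[
\bK' + \tau\varepsilon^{s'}(\bI + \bE) = (\bK' + \tau\varepsilon^{s'}\bI)\bigl(\bI + \tau\varepsilon^{s'}(\bK' + \tau\varepsilon^{s'}\bI)^{-1}\bE\bigr),
\]
a short calculation gives $\bKe(\bKe + \tau\varepsilon^{s'}\bI)^{-1} = \bT\bK'(\bK' + \tau\varepsilon^{s'}\bI)^{-1}\bT^{-1} + o(1)$. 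Passing to the limit and using $\bT \to \bI$ yields equality of the two regularised-inverse limits for every admissible $(s',\tau)$. Because the family of these limits determines the ASE (the rank-counting formula \eqref{eq:rank-reginv-counting} locates the valuations $\alpha_i$, and the $\tau$-dependence of the limit at a matching $s' = \alpha_j$ recovers each block $\Kb_j$ by inverting $\Kb_j \mapsto \Kb_j(\Kb_j + \tau\bI)^{-1}$), we conclude that $\Kbe = \underline{\bK'}(\varepsilon)$.

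The main obstacle I anticipate is in controlling the discrepancy term in the last step: the inverse $(\bK' + \tau\varepsilon^{s'}\bI)^{-1}$ is generally not bounded uniformly in $\varepsilon$, so the residual $\bK'(\bK' + \tau\varepsilon^{s'}\bI)^{-1}\bE(\bI + o(1))(\bK' + \tau\varepsilon^{s'}\bI)^{-1}$ cannot be estimated naively. The trick is to absorb the scalar $\tau\varepsilon^{s'}$ (produced by the factorisation of the perturbed regularisation) into the rightmost inverse, so that both flanking matrices become bounded regularised-inverse-type objects of $\bK'$ (the bounds coming from the identity $\tau\varepsilon^{s'}(\bK' + \tau\varepsilon^{s'}\bI)^{-1} = \bI - \bK'(\bK' + \tau\varepsilon^{s'}\bI)^{-1}$ for $\tau$ not an exceptional value of \cref{lem:RI}), sandwiching the $o(1)$ factor $\bE$ between them, which makes the total residual genuinely $o(1)$.
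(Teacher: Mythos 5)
Your proof is correct and, at the crucial step, takes a route that is genuinely different from the paper's. The first step is the same in substance: you diagonalise $\bH(\varepsilon)$ via the block $LDL^\T$ factorisation and conjugate by $\bDe$ to obtain $\bKe = \bT(\varepsilon)\bK'(\varepsilon)\bT^{\T}(\varepsilon)$ with $\bT(\varepsilon) = \bDe\bL(\varepsilon)\bDe^{-1} = \bI + o(1)$; the paper exhibits the inverse matrix $\bQ(\varepsilon) = \bT(\varepsilon)^{-1}$ explicitly and verifies $\bQ\bKe\bQ^\T = \bK'$ by multiplication, but the two are the same congruence. The difference is in how you justify that congruence by a near-identity matrix preserves the ASE. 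The paper does this through its \cref{cor:changing_ASE}, whose proof substitutes the analytic eigendecomposition $\bKe = \bU\bLa\bU^\T$ and then invokes a dedicated auxiliary result (\cref{lem:diagonalRIgeneralized}) about diagonal matrices in scaled form. You instead work purely algebraically with the regularised inverses of $\bKe$ and $\bK'$, which avoids the auxiliary lemma and the detour through the spectral factorisation; this is somewhat more self-contained. You also correctly note that it suffices to match the limits of $\bM_{s,\tau}$ over $s,\tau$, since the rank function \eqref{eq:rank-reginv-counting} recovers the valuations and sizes, and the map $\Kb_j \mapsto \Kb_j(\Kb_j + \tau\bI)^{-1}$ is injective for generic $\tau$ — this is the same inversion principle the paper uses when proving \cref{thm:ASE-H}.

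One small inaccuracy: the residual term you wrote in the final paragraph, $\bK'(\bK' + \tau\varepsilon^{s'}\bI)^{-1}\bE(\bI + o(1))(\bK' + \tau\varepsilon^{s'}\bI)^{-1}$, is not quite the remainder that actually arises. The clean way to run your estimate is to set $\bR'(\varepsilon) = \tau\varepsilon^{s'}(\bK' + \tau\varepsilon^{s'}\bI)^{-1}$ (bounded by \cref{lem:RI}, for admissible $\tau$) and compute directly
\[
\tau\varepsilon^{s'}(\bKe + \tau\varepsilon^{s'}\bI)^{-1} \;=\; \bT^{-\T}\,(\bI + \bR'\bE)^{-1}\bR'\,\bT^{-1},
\]
where $\bE = (\bT^\T\bT)^{-1}-\bI = o(1)$. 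Since $\bR'\bE = o(1)$, the Neumann series gives $(\bI + \bR'\bE)^{-1}\bR' = \bR' + o(1)$, and then $\bT^{-\T}(\bR'+o(1))\bT^{-1} = \bR' + o(1)$ because $\bT^{-1} = \bI + o(1)$ and $\bR'$ is bounded; subtracting from $\bI$ gives $\bM_{s',\tau}(\varepsilon) = \bK'(\bK'+\tau\varepsilon^{s'}\bI)^{-1} + o(1)$, which is exactly the matching you need. So the "absorb $\tau\varepsilon^{s'}$ into the rightmost inverse" idea you describe is the right one, but the bookkeeping should be done as above. Finally, note that you work with $\bT = \bI + o(1)$ rather than the paper's stated $\bI + \O(\varepsilon)$; since $s - \gamma_m$ may be a non-integer less than $1$, your weaker hypothesis is actually the accurate one, so this is a point in your favour.
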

\begin{remark}
  \Cref{thm:iteration} bears a strong resemblance to theorem 1.1 from
  \cite{Carlsson2024}, which deals with perturbed matrices of the
  form $A+E$ where $E$ is any perturbation (not necessarily analytic), and gives
  an estimate with $\O(\norm{E}^3)$ error for the eigenvalues in which a similar
  Schur complement also appears. 
\end{remark}

Before proving  \Cref{thm:iteration}, we show an example of such a reduction.
\begin{example}
We continue the example from the beginning of the \cref{sec:iterative-alg}.
Applying  \Cref{thm:iteration} in \eqref{eq:degenerate-example} with $\Delta(\varepsilon)$ as in \eqref{eq:degenerate-example},
we get that the Schur complement becomes
\[
\begin{pmatrix}
2 & 1 \\
1 & 1+\varepsilon 
\end{pmatrix}-
\begin{pmatrix}
1\\1
\end{pmatrix}
\begin{pmatrix}
1&1
\end{pmatrix} =\begin{pmatrix}
1 & 0 \\
0 & \varepsilon
\end{pmatrix},
\]
hence the ASE of the matrix in \eqref{eq:degenerate-example} is equal to 
the ASE of 
\[
\begin{pmatrix}
1 & & \\
 & \varepsilon^2 & \\
 & & \varepsilon^3 
\end{pmatrix},
\]
which is already in the ASE form.
\end{example}

\begin{remark}
Note that \Cref{thm:iteration} can be used to obtain an ASE of any symmetric analytic matrix in an iterative fashion.
Indeed, take the leading term  of the right lower block in \eqref{eq:rotated_matrix_Qe} 
\[
\bA(\varepsilon) = \varepsilon^{s} (\bH_{22}(\varepsilon) -  \bH_{21}(\varepsilon)\bH^{-1}_{11}(\varepsilon)\bH_{12}(\varepsilon)),
\]
and assume that $\val (\bA(\varepsilon) ) = \gamma \ge s$.
Then the leading term of the matrix $\bA(\varepsilon) / \varepsilon^{\gamma}$ will describe the term of the ASE for the group of eigenvalues of the next valuation $\varepsilon^{\gamma}$.
By choosing an appropriate rotation $\matr{Q}$, this matrix can be brought to 
\[
\bQ \bA(\varepsilon)\bQ^\T = 
\begin{pmatrix}
\O(\varepsilon^{\gamma}) & \O(\varepsilon^{\gamma+1})\\
  \O(\varepsilon^{\gamma+1}) & \O(\varepsilon^{\gamma+1}) \\
\end{pmatrix},
\]
which is in a diagonally-scaled form and thus \Cref{thm:iteration} can
be applied again (combined with \Cref{thm:ASE-H} if necessary).
\end{remark}

\subsection{Proof of \Cref{thm:iteration}}

In order to prove \Cref{thm:iteration}, we will need several lemmas. 
\begin{lemma}\label{lem:diagonalRIgeneralized}
Let ${\bLa}(\varepsilon)$  be an analytic diagonal matrix.
Then we have that for any positive integer $s$ and a  symmetric  $\bA(\varepsilon) = \O(\varepsilon)$, 
\[
\bM_{s,\tau}(\bLa(\varepsilon)+ \tau\varepsilon^s \bA(\varepsilon)) = \bM_{s,\tau}(\bLa(\varepsilon)).
\]
for all $\tau$ except a finite number of values.
\end{lemma}
\begin{remark}
\Cref{lem:diagonalRIgeneralized} slightly generalizes the derivations in the proof of  \Cref{lem:RI}, where we look at diagonal perturbations of a diagonal matrix.
\end{remark}
\begin{proof}[Proof of  \Cref{lem:diagonalRIgeneralized}]
Let ${\Lambda}(\varepsilon)$ be written in the form
\[
{\bLa}(\varepsilon) = 
\bDe (\widetilde{\bLa}_0 + \O(\varepsilon)) \bDe,\quad
\widetilde{\bLa}_0 = 
\begin{pmatrix}
\widetilde{\lambda}_1 &  & \\
&\ddots& \\
&& \widetilde{\lambda}_n
\end{pmatrix},
\]
where $\bDe$ is as in  \eqref{eq:bDe_degenerate} and  $\widetilde{\lambda}_1,\ldots,\widetilde{\lambda}_m  \neq 0$.
Consider general $\tau$ (not equal to any of $\{-\widetilde{\lambda_k}\}_{k={m+1}}^{n}$) and denote the perturbed matrix $\bLa'(\varepsilon) = \bLa(\varepsilon) + \tau \varepsilon^{s} \bA(\varepsilon)$.
Then, it is easy to see that we can put the matrix $\bLa'(\varepsilon)$ in the diagonally scaled form with $\bDe$
\[
\bLa'(\varepsilon)= \bDe (\widetilde{\bLa}_0 + \O(\varepsilon) + \tau \varepsilon^{s} \bDe^{-1} \bA(\varepsilon) \bDe^{-1}) \bDe = \bDe (\widetilde{\bLa}_0 + \O(\varepsilon)) \bDe.
\]
Thus we can apply \Cref{thm:ASE-H} to get that
\[
\truncASE{\bLa'}{s} = \bDe \widetilde{\bLa}_0  \bDe  = \truncASE{\bLa}{s},
\]
which implies $\bM_{s,\tau}(\bLa') = \bM_{s,\tau}(\bLa)$ by \Cref{prop:RI-asymp-equiv}.
\end{proof}

\subsubsection{Main result}
Then Lemma~\ref{lem:diagonalRIgeneralized} implies the following.
\begin{corollary}\label{cor:changing_ASE}
Let $\bQ(\varepsilon) = \bI + O(\varepsilon)$.
Then the congruence with $\bQ(\varepsilon)$ preserves the ASE for any matrix:
\[
\bQ(\varepsilon)\bKe \bQ^{\T}(\varepsilon) \equivinf \bKe.
\]
\end{corollary}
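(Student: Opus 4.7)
The plan is to compare $\bK'(\varepsilon) \eqdef \bQ(\varepsilon)\bKe\bQ^\T(\varepsilon)$ with $\bKe$ by probing their regularised inverses and invoking \cref{lem:RI}, which asserts that the ASE is uniquely determined by the family of limits $\lim_{\varepsilon\to 0}\bM_{s,\tau}(\varepsilon)$ across $s>0$ and generic $\tau$. Hence it suffices to prove
\[
  \lim_{\varepsilon\to 0}\bK'(\bK'+\tau\varepsilon^s\bI)^{-1} \;=\; \lim_{\varepsilon\to 0}\bKe(\bKe+\tau\varepsilon^s\bI)^{-1}
\]
for every such $(s,\tau)$. Since $\bQ(0)=\bI$, the matrix $\bE(\varepsilon) \eqdef \bQ^{-1}(\varepsilon)\bQ^{-\T}(\varepsilon) = \bI + O(\varepsilon)$ is well defined and analytic in a neighbourhood of $0$.

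First I would factor $\bQ$ out of the regularised inverse using
\[
  \bK'+\tau\varepsilon^s\bI \;=\; \bQ\bigl(\bKe+\tau\varepsilon^s\bE\bigr)\bQ^\T, \qquad
  \bK'(\bK'+\tau\varepsilon^s\bI)^{-1} \;=\; \bQ\,\bKe(\bKe+\tau\varepsilon^s\bE)^{-1}\bQ^{-1}.
\]
Diagonalising the middle factor via the Rellich decomposition $\bKe = \bU\bLa\bU^\T$ gives $\bU\bLa(\bLa+\tau\varepsilon^s\widehat{\bE})^{-1}\bU^\T$, where $\widehat{\bE} \eqdef \bU^\T\bE\bU = \bI+\bA(\varepsilon)$ and $\bA(\varepsilon) = O(\varepsilon)$ (by orthogonality of $\bU$). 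Since $\bQ,\bQ^{-1}\to\bI$, the outer conjugation is asymptotically trivial, so the whole question reduces to the leading behaviour of $\bLa\bigl(\bLa+\tau\varepsilon^s(\bI+\bA)\bigr)^{-1}$.

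The final step invokes \cref{lem:diagonalRIgeneralized}: I would write $\bLa = \boldsymbol{\Delta}(\varepsilon)\widetilde{\bLa}(\varepsilon)\boldsymbol{\Delta}(\varepsilon)$ in its natural diagonally-scaled form, with blocks of valuation $\alpha_i/2$ for $\alpha_i < s$ and an $\varepsilon^{s/2}\bI$ block absorbing the higher-valuation eigenvalues. The lemma then guarantees that the $O(\varepsilon)$ perturbation $\bA$ is asymptotically invisible, giving $\lim_{\varepsilon\to 0}\bLa(\bLa+\tau\varepsilon^s(\bI+\bA))^{-1} = \lim_{\varepsilon\to 0}\bLa(\bLa+\tau\varepsilon^s\bI)^{-1}$. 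Chaining the three equalities yields the required coincidence of limits, and \cref{lem:RI} then delivers $\underline{\bK'(\varepsilon)} = \Kbe$.

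The main bookkeeping hurdle is making sure the template of \cref{lem:diagonalRIgeneralized} fits at every scale $s$ that matters. It suffices to consider $s \in \{\alpha_0,\ldots,\alpha_p\}$, since these are the only points where the rank function from \cref{lem:RI} jumps; by Rellich's theorem each $\alpha_i$ is a non-negative integer, so the ``slow'' block $\{i:\alpha_i \geq s\}$ contributes analytic diagonal entries $\varepsilon^{\alpha_i-s}(\widetilde{\lambda}_{i,k}+O(\varepsilon))$ with $\alpha_i - s \in \mathbb{Z}_{\geq 0}$, exactly matching the hypothesis of the lemma.
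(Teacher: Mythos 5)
Your proof is correct and follows essentially the same route as the paper's: both conjugate by the Rellich factorisation $\bKe = \bU(\varepsilon)\bLa(\varepsilon)\bU^\T(\varepsilon)$, absorb the congruence by $\bQ(\varepsilon)$ into an $O(\varepsilon)$ perturbation of the regulariser $\tau\varepsilon^s\bI$, apply \cref{lem:diagonalRIgeneralized} to the resulting diagonal problem, and then compare regularised-inverse limits via \cref{lem:RI}. The only difference is cosmetic — you factor $\bQ$ out first (giving $\bQ\bKe(\bKe+\tau\varepsilon^s\bE)^{-1}\bQ^{-1}$), whereas the paper factors $\bQ\bU$ jointly out of the resolvent — and your remark about restricting $s$ to the integer valuations $\{\alpha_0,\ldots,\alpha_p\}$ is a slightly more careful way to make $\widetilde{\bLa}$ analytic so the hypotheses of \cref{lem:diagonalRIgeneralized} apply verbatim; the paper leaves that bookkeeping implicit.
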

\begin{proof}
Let $\bKe = \bU(\varepsilon) \bLa(\varepsilon) \bU^{\T}(\varepsilon)$ be the analytic EVD of $\bKe$ and denote
\[
\bK'(\varepsilon) \eqdef \bQ(\varepsilon)\bKe \bQ^{\T}(\varepsilon) = \bB(\varepsilon)  \bLa(\varepsilon) \bB^{\T}(\varepsilon),
\]
where $\bB(\varepsilon) \eqdef \bQ(\varepsilon)\bU(\varepsilon)$. Note that $\bB(\varepsilon)  = \Ulim + \O(\varepsilon)$ and hence we have that $\bB^{-1}(\varepsilon)  = \Ulim^{\T} + \O(\varepsilon)$, and
\[
\bI = \bB(\varepsilon) ( \bB^{-1}(\varepsilon) \bB^{-\T}(\varepsilon))  \bB^{\T}(\varepsilon) =  \bB(\varepsilon) ( \bI + \O(\varepsilon))  \bB^{\T}(\varepsilon) 
\]
Using this identity, for any integer $s$, we get that
\begin{align*}
\bM(\tau\varepsilon^{s},\bK') & = \bI - \tau \ep{s}(\bK'(\varepsilon) + \tau \ep{s}\bI)^{-1} 
= \bI -\tau \ep{s}(\bB(\varepsilon) \bLa(\varepsilon) \bB^{\T}(\varepsilon) + \tau \ep{s}\bI)^{-1} \\
&= \bI -\tau \ep{s}(\bB(\varepsilon)(\bLa(\varepsilon) + \tau \ep{s} (\bI + O(\varepsilon))) \bB^{\T}(\varepsilon) )^{-1}  \\
&= \bI -\bB^{-\T}(\varepsilon) \left(\tau \ep{s} \left(\bLa(\varepsilon) + \tau \ep{s} (\bI + O(\varepsilon)) \right)^{-1} \right)\bB^{-1}(\varepsilon)\\
 &= \bI -\left(\bI + \O(\varepsilon) \right) \left(\Ulim + O(\varepsilon) \right) \left(\bI - \bM(\tau\varepsilon^{s},\bLa(\varepsilon) + \O(\varepsilon)) \right) \left(\Ulim^{\T} + O(\varepsilon) \right)  (\bI + \O(\varepsilon))\\
 &= \bI -\left(\bI + \O(\varepsilon) \right) \left(\Ulim + O(\varepsilon) \right) \left(\bI - \bM_{s,\tau}(\bLa) + \O(\varepsilon) \right) \left(\Ulim^{\T} + O(\varepsilon) \right)  \left(\bI + \O(\varepsilon) \right)
\end{align*}
where the equality is due to \Cref{lem:diagonalRIgeneralized}.
By taking the limit in $\varepsilon$, we get $\bM(\tau\varepsilon^{s},\bK') = \bM(\tau\varepsilon^{s},\bK)$, which is valid for all nonnegative integer $s$.
Then  we have  that $\bK' \equivinf \bKe$ by \Cref{prop:RI-asymp-equiv}.
\end{proof}

\begin{proof}[Proof of \Cref{thm:iteration}]
Consider the following analytic matrix
\[
\bQ(\varepsilon) = 
\begin{pmatrix}
\bI_{m} & \zeroes \\
- \varepsilon^{s} \bH_{21}(\varepsilon)\bH^{-1}_{11}(\varepsilon) \boldsymbol{\Delta}^{-1}_m(\varepsilon)   &  \bI_{n-m}
\end{pmatrix}
\]
Then we can verify that 
\begin{align*}
&\bQ(\varepsilon)\bKe\bQ^{\T}(\varepsilon) =  \\
&=\begin{pmatrix}
\bI_{m} & \zeroes \\
- \varepsilon^{s} \bH_{21}(\varepsilon)\bH^{-1}_{11}(\varepsilon) \boldsymbol{\Delta}^{-1}_m(\varepsilon)   &  \bI_{n-m}
\end{pmatrix}  \bDe
\begin{pmatrix}
\bH_{11}(\varepsilon) & \bH_{12}(\varepsilon) \\ 
\bH_{21}(\varepsilon) & \bH_{22}(\varepsilon) \\ 
\end{pmatrix}\bDe\begin{pmatrix}
\bI_{m} & - \varepsilon^{s} \boldsymbol{\Delta}^{-1}_m(\varepsilon)\bH^{-1}_{11}(\varepsilon)\bH_{12}(\varepsilon) \ \\
\zeroes   &  \bI_{n-m}
\end{pmatrix} \\
 &= \begin{pmatrix}
\boldsymbol{\Delta}_m(\varepsilon) \bH_{11}(\varepsilon) \boldsymbol{\Delta}_m(\varepsilon) & 0 \\ 
0    & \varepsilon^{2s} (\bH_{22}(\varepsilon) -  \bH_{21}(\varepsilon)\bH^{-1}_{11}(\varepsilon) \boldsymbol{\Delta}^{-1}_m(\varepsilon)\boldsymbol{\Delta}_m(\varepsilon)\bH_{11}(\varepsilon) \boldsymbol{\Delta}_m(\varepsilon)\boldsymbol{\Delta}^{-1}_m(\varepsilon)\bH_{12}(\varepsilon)),\\ 
\end{pmatrix}  \\
&= \bK'(\varepsilon),
\end{align*}
and the proof is complete by \Cref{cor:changing_ASE}.
\end{proof}

\section{Conclusion}
\label{sec:conclusion}

We hope to have convinced the reader that Theorems \ref{thm:ASE-H},
\ref{thm:ASE-generalised-kernel-form} and \ref{thm:iteration}, can be used to simplify the
analysis of matrix perturbations. One noteworthy limitation is that we have
assumed that the perturbations are analytic, i.e. the classical framework used
by Rellich and Kato. This limitation can be lifted, if one instead looks at the
matrix $\bKe$ as admitting an asymptotic series (which need not be a power
series). We intend to extend our results in this direction in future work. 

\section{Appendix}
\label{sec:appendix}

\begin{proof}[Proof of \Cref{prop:asymptotic-kernel-smooth}]
As shown in the proofs of \cite[Theorems  4.5 and 6.3]{BarthelmeUsevich:KernelsFlatLimit}, under such assumptions, the kernel matrix has expansion
\begin{equation}\label{eq:kernel_radial_expansion}
\bKe  = \bV_{\le p-1} \matr{\Delta}_{p-1} \bW_{\le p-1} \matr{\Delta}_{p-1} \bV^{\T}_{\le p-1} +
\varepsilon^p(\bV_{\le p-1} \matr{\Delta}_{p-1} \bW_{1}(\varepsilon) + 
\bW_{2}(\varepsilon) \matr{\Delta}_{p-1} \bV^{\T}_{\le p-1} ) + \varepsilon^{2p-1} (\matr{W}_3(\varepsilon)),
\end{equation}
where $\matr{W}_3(\varepsilon)=\varepsilon^{2p-1} (\psi_{2p-1} \bD^{(2p-1)} + \O(\varepsilon))$ and $\psi_{2p-1} = 0$ if $p < r$.
Note that since $\bV = \bV_{\le p-1}$ is full row rank, 
we have 
\begin{equation}\label{eq:identity_decomposition_smooth}
\bI_n = \bV \matr{\Delta}_{p-1} \matr{\Delta}_{p-1}^{-1} \bV^{\dagger},
\end{equation}
hence, we can rewrite
\[
\varepsilon^p(\bV_{\le p-1} \matr{\Delta}_{p-1} \bW_{1}(\varepsilon) + 
\bW_{2}(\varepsilon) \matr{\Delta}_{p-1} \bV^{\T}_{\le p-1} ) = 
\bV \matr{\Delta}_{p-1}   (\widetilde{\bW}_{1}(\varepsilon)  + \widetilde{\bW}_{2}(\varepsilon)  + )\matr{\Delta}_{p-1}   \bV^\T,
\]
where 
\[
\widetilde{\bW}_{2}(\varepsilon) =  \varepsilon^{p} \bDe^{-1} \bV^{\dagger} \bW_2(\varepsilon) = \O(\varepsilon),\quad \widetilde{\bW}_{1}(\varepsilon) =  \varepsilon^{p} \bW_1(\varepsilon)  (\bV^{\dagger})^{\T} \bDe^{-1} = \O(\varepsilon).
\]
Similarly, for $\bW_3(3)$, we have
\[
\varepsilon^{2p-1} \bW_3(3) =  \bV \matr{\Delta}_{p-1} \underbrace{\varepsilon^{p-1} \matr{\Delta}_{p-1}^{-1} \bV^{\dagger}
(\varepsilon \bW_3(\varepsilon)) (\bV^{\dagger})^\T \varepsilon^{p-1} \matr{\Delta}_{p-1}^{-1}}_{\widetilde{\matr{W}}_{3}(\varepsilon) = \O(\varepsilon)}\matr{\Delta}_{p-1} \bV.
\]
Combining it all together, we obtain
\[
\bKe = \bV \bDe (\bW + \underbrace{\widetilde{\bW}_{1}(\varepsilon) + \widetilde{\bW}_{2}(\varepsilon) + \widetilde{\bW}_{3}(\varepsilon)}_{\O(\varepsilon)})  \bDe  \bV^\T,
\]
which completes the proof.
\end{proof}

\begin{proof}[Proof of \Cref{prop:asymptotic-kernel-smooth-general}]
The proof repeats that of \Cref{prop:asymptotic-kernel-smooth}, but instead of \eqref{eq:kernel_radial_expansion} we use another expansion from  \cite[eqn. (32),(55)]{BarthelmeUsevich:KernelsFlatLimit}, which reads
\[
\bKe = \bV \bDe \bW \bDe  \bV^\T +  \varepsilon^{p} \bV \bDe \bW_{1}(\varepsilon) + \varepsilon^{p}\bW_{2}(\varepsilon) \bDe  \bV^\T  + \varepsilon^{2p}\bW_{3}(\varepsilon).
\]
\end{proof}

\begin{proof}[Proof of \Cref{prop:asymptotic-kernel-finite-smoothness}]
We use the expansion \eqref{eq:kernel_radial_expansion} for $p = r$, and an idea similar to the one in \eqref{eq:identity_decomposition_smooth}, but for $\bV = \left[ \bV_{\leq r-1}\ \bA \right]$.
Define the matrix $\widetilde{\bV}$ as 
\[
\widetilde{\bV} = \begin{bmatrix} \bV_{\leq r-1}^{\dagger} \\ \bA^{\dagger}\end{bmatrix}.
\]
Then, since $\bV$ is full row rank and the matrices $\bV_{\leq r-1}$ and  $\bA$ span orthogonal subspaces, we have
\begin{equation*}
\bI_n = \bV \widetilde{\bV} = \bV \bDe \bDe^{-1} \widetilde{\bV},
\end{equation*}
hence, we can rewrite
\begin{align*}
\varepsilon^r \bV_{\le r-1} \matr{\Delta}_{r-1} \bW_{1}(\varepsilon)  & = 
\bV_{\le r-1} \matr{\Delta}_{r-1} \widetilde{\bW}_{1}(\varepsilon) \bDe  \bV^\T,\\
\varepsilon^r  \bW_{2}(\varepsilon) \matr{\Delta}_{r-1} \bV^{\T}_{\le r-1}  & =
\bV \bDe \widetilde{\bW}_{2}(\varepsilon)  \matr{\Delta}_{r-1} \bV^\T_{\le r-1},
\end{align*}
where
\begin{align*}
\widetilde{\bW}_{1}(\varepsilon) =  \varepsilon^{r} \bW_1(\varepsilon)  (\widetilde{\bV})^{\T} \bDe^{-1} = o(1).
\widetilde{\bW}_{2}(\varepsilon) =  \varepsilon^{r} \bDe^{-1} \widetilde{\bV} \bW_2(\varepsilon) = o(1).
\end{align*}
Similarly, for $\bW_3(3)$, we have
\[
\varepsilon^{2r-1} \bW_3(3) =  \bV (\varepsilon^{r-\frac{1}{2}}\matr{I}) \widetilde{\bV} \bW_3(\varepsilon)  (\widetilde{\bV})^\T (\varepsilon^{r-\frac{1}{2}}\matr{I})  \bV^\T.
\]
Note that
\[
\widetilde{\bV} \bW_3(\varepsilon)  (\widetilde{\bV})^\T = 
\begin{bmatrix}
\bV_{\le r-1}^{\dag}  \bD^{(2r-1)} (\bV_{\le r-1}^{\dag})^{\T} & \bV_{\le r-1}^{\dag}   \bD^{(2r-1)} (\bA^{\dag})^{\T}\\
\bA^{\dag}  \bD^{(2r-1)} (\bV_{\le r-1}^{\dag})^{\T} & \bA^{\dag}  \bD^{(2r-1)} (\bA^{\dag})^{\T}
\end{bmatrix}
+ \O(\varepsilon),
\]
and hence
\[
(\varepsilon^{r-\frac{1}{2}}\matr{I}) \widetilde{\bV} \bW_3(\varepsilon)  (\widetilde{\bV})^\T (\varepsilon^{r-\frac{1}{2}}\matr{I}) =
\bDe \left( \begin{bmatrix}
0 & 0 \\
0 & \bA^{\dag}  \bD^{(2r-1)} (\bA^{\dag})^{\T}
\end{bmatrix} + o(1) \right) \bDe
\]
Combining it all together, we obtain
\[
\bKe = \bV \bDe (\left( \begin{bmatrix}
\bW_{\le r-1} & 0 \\
0 & 0
\end{bmatrix} +  \begin{bmatrix}
0 & 0 \\
0 & \bA^{\dag}  \bD^{(2r-1)} (\bA^{\dag})^{\T}
\end{bmatrix} + o(1) \right) )  \bDe  \bV^\T,
\]
which completes the proof.
\end{proof}

\bibliographystyle{plainnat}

\bibliography{../flat_limit.bib}

\end{document}